\documentclass[11pt,letterpaper]{article}

\usepackage{renstyles}
\usepackage{enumitem}

\newcommand{\KL}{\mathrm{KL}} 
\newcommand{\CaseI}{{\rm (Case~I)}} 
\newcommand{\CaseII}{{\rm (Case~II)}} 
 
\newcommand{\md}{\,\mathrm{d}}   
\usepackage{pgfplots}
\pgfplotsset{compat=1.18}

\title{Sampling Using Hybrid Stochastic Dynamics}
\author{
Bj\"orn Engquist\thanks{Department of Mathematics and the Oden Institute, The University of Texas at Austin, Austin, TX 78731; engquist@oden.utexas.edu}
\and
Kui Ren\thanks{Department of Applied Physics and Applied Mathematics, Columbia University, New York, NY 10027; kr2002@columbia.edu}
\and 
Yunan Yang\thanks{Department of Mathematics, Cornell University, Ithaca, NY 14850; yunan.yang@cornell.edu}
\thanks{This work is partially supported by the National Science Foundation through grants DMS-2208504 (BE), DMS-1937254 (KR), DMS-2309802 (KR), and DMS-2409855 (YY).  YY~was also partially supported by Office of Naval Research through grant N00014-24-1-2088.}
}

\date{}
\begin{document}


\maketitle



\begin{abstract}
This work proposes a framework for sampling from the Gibbs distribution of a given potential using hybrid stochastic dynamics. In this framework, two distinct sampling dynamics are run in different regions of the state space. The two dynamics are coupled across the interface through natural transmission conditions that preserve the target distribution. Using a specially constructed regularization scheme, we establish an exponential rate of convergence for the hybrid dynamics to equilibrium. We also analyze the metastability properties of the hybrid dynamics in a radially symmetric landscape, showing that the hybrid scheme can improve the mean exit time. This advantage is further confirmed by the numerical experiments.
\end{abstract}


\begin{keywords}
Hybrid stochastic dynamics, adaptive diffusion, Langevin dynamics, sampling algorithm, Fokker--Planck equation, Gibbs distribution 
\end{keywords}


\begin{AMS}
65C05, 35Q84, 60J60.
\end{AMS}

\section{Introduction}
\label{SEC:Intro}

Sampling from Gibbs distributions is a fundamental task in scientific and statistical computing. Let $\Omega\subseteq\bbR^d$ ($d\ge 1$) be the given state space. Given a potential energy landscape $F:\Omega\to \bbR$ and a parameter $\varepsilon>0$, the objective is to sample from the Gibbs distribution
\begin{equation}\label{EQ:pi}
\pi(\bx)=\frac{1}{Z} e^{-F(\bx)/\eps},
\qquad
Z:=\int_\Omega e^{-F(\bx)/\eps}\,\md\bx.
\end{equation}
A standard approach is to simulate the overdamped Langevin dynamics
\begin{equation}\label{EQ:Langevin}
    dX_t=-\nabla F(X_t)\,dt+\sqrt{2\eps}\,dW_t,
\end{equation}
where $W_t$ is a standard $d$-dimensional Brownian motion. The invariant measure of~\eqref{EQ:Langevin} is precisely $\pi$. This dynamics forms the basis of many modern sampling algorithms; see, for instance,~\cite{BrGeJoMe-Book11,ChErLiShZh-FCM25,DuMo-AAP17,ErHo-PMLR21,LeRiGe-NIPS18,RoSt-MCAP02,VeWi-NIPS19,XiShLiByGi-SPL14} and references therein.

Despite its simplicity and theoretical appeal, overdamped Langevin dynamics often suffers from severe metastability in multi-well energy landscapes. That is, when the potential $F$ contains multiple local minima separated by high barriers, the process may remain trapped in one metastable basin for exponentially long times before transitioning to another. This phenomenon leads to slow mixing and poor sampling efficiency, especially in low-temperature (i.e., small $\eps$) regimes. To address this difficulty, various accelerated sampling strategies have been proposed~\cite{LeRoSt-Book10, Pavliotis-Book14,RoRo-Book04}, including underdamped dynamics~\cite{BoEbZi-AAP20,CaLuWa-ARMA23,ChChBaJo-PMLR18, DaRi-Bernoulli20,EbGuZi-AOP19,
LeMaSt-IMAJNA16,MaChJiFlJo-Bernoulli21,Monmarche-EJS21,RiVo-arXiv22}, preconditioned Langevin dynamics~\cite{FaSaSk-JCP14, GiCa-JRSS11, LaRoRo-AAP13,LiChCaCa-AAAI16,TiPa-JRSSB18}, adaptive diffusion methods~\cite{EnReYa-CAMC24,RiQuRiSc-SIAM24}, tempering, replica exchange and annealing-based methods~\cite{DuLiPlDo-MMS12,EaDe-PCCP05,GeLeRi-NeurIPS18, 
 MaPa-EL92, SuSyBoCa-NeurIPS22, SwWa-PRL86, SyBoDeDo-JRSSB22}, and many more~\cite{AnLi-AOS21,BeHeDoJa-arXiv19,BoVoDo-JASA18,HaScZh-Nonlinearity16,LiWa-NeurIPS16,LuSlWa-Nonlinearity23,ReWe-SIAMJUQ21,ZhCh-ICLR22}. A common principle underlying many successful approaches is to modify the local geometry of the dynamics in order to facilitate barrier crossing while preserving the target Gibbs law~\eqref{EQ:pi}.

In this work, we propose a framework for \emph{hybrid dynamics sampling}, in which two distinct diffusion dynamics are run in different regions of the state space. More precisely, the state space is partitioned into two subregions, for example, according to a level set of the potential, and different stochastic dynamics are prescribed in the two regions. The coefficients are chosen so that the resulting hybrid process still preserves the original Gibbs distribution globally. The motivation is to retain the favorable properties of overdamped Langevin dynamics in regions where the potential already provides efficient confinement, while replacing the dynamics in metastable regions by other dynamics whose effective geometry is better suited for rapid exploration.

Our primary example is a hybridization between the standard overdamped Langevin dynamics and the derivative-free adaptive diffusion introduced in~\cite{EnReYa-CAMC24,EnReYa-arXiv24}, although the framework developed here is more general. The earlier adaptive-variance sampler showed that, on bounded domains, a carefully chosen state-dependent diffusion coefficient can flatten the effective geometry inside nonconvex potential wells and thereby accelerate metastable transitions. However, using this derivative-free dynamics globally is not always appropriate: on unbounded domains, removing the Langevin drift everywhere may destroy the confining mechanism at infinity, and the resulting process need not converge to the target Gibbs distribution. The hybrid construction is also motivated by a simple numerical observation (see Section~\ref{subsec:regularized-hd-1d}).  If the derivative-free diffusion coefficient is used on the whole space, then the coefficient grows exponentially in high-energy regions, and the resulting numerical scheme can lose confinement immediately.

The hybrid dynamics proposed here localizes this acceleration mechanism. Inside a prescribed interior region, the drift is removed, and the diffusion coefficient is modified so that the Gibbs measure remains invariant; in particular, the weighted conductance density $a(\bx)\pi(\bx)$ becomes constant there. Outside this region, the dynamics revert to overdamped Langevin dynamics, preserving confinement and convergence. 

The main part of this work provides a rigorous convergence theory for the hybrid dynamics. Because the coefficients are discontinuous across the switching interface, the corresponding Fokker--Planck equation naturally takes the form of a transmission problem. We introduce a smooth regularization of the coefficients, establish uniform entropy dissipation estimates independent of the regularization parameter, and prove compactness and convergence of weak solutions to the limiting hybrid Fokker--Planck system. This yields exponential convergence of the hybrid dynamics toward the Gibbs measure in relative entropy under a logarithmic Sobolev inequality assumption.

We also analyze the metastability properties of the hybrid dynamics in a radial double-well landscape, as an example of situations where the hybrid dynamics achieve the better mixing speed of the two separate dynamics. Using the radial Poisson problem and Laplace asymptotics, we compare the mean transition times between metastable wells for the hybrid dynamics and for overdamped Langevin dynamics. Our analysis shows that, while the overdamped Langevin transition time is governed by the classical saddle barrier height, the hybrid dynamics replaces this barrier by the switching level of the interface. Consequently, if the switching interface is chosen below the saddle level, precisely when $F_r-F_g<H-F_u$ (with $F_g$, $F_u$, $H$, and $F_r$ the energies of the lower well, upper well, saddle, and switching interface, respectively), the hybrid dynamics achieves an exponentially faster transition rate than the overdamped Langevin dynamics.

The remainder of the paper is organized as follows. We introduce the hybrid dynamics and its regularization in Section~\ref{SEC:Hybrid-Regularized}. We then analyze the regularized dynamics uniformly in $\delta$ and then pass to the sharp-interface limit in Section~\ref{SEC:all_properties}. The key estimate is the entropy dissipation identity, which gives compactness and transfers the exponential convergence rate to the limiting hybrid dynamics. In Section~\ref{SEC:Exit}, we compare the metastable transition behavior of the hybrid and overdamped Langevin dynamics in a radial double-well potential to show that the hybrid dynamics accelerates metastable transition in this setting. Numerical simulations are presented in Section~\ref{SEC:Num} to verify that the hybrid dynamics indeed sample the target distribution.  Concluding remarks are offered in Section~\ref{SEC:Concl}.

\section{Hybrid dynamics and diffuse-interface regularization}
\label{SEC:Hybrid-Regularized}

We introduce the sharp-interface hybrid sampler and then define a smooth diffuse-interface approximation. The regularization is chosen so that the Gibbs density remains invariant for every $\delta>0$, while the coefficients remain uniformly controlled as $\delta\to0$.

\subsection{Sharp-interface hybrid dynamics}
\label{SUBSEC:Hybrid}

In our hybrid approach, we divide the state space $\Omega$ into two regions. Two different dynamics are run in the regions. We simultaneously consider two cases:
\begin{itemize}
    \item \hypertarget{case1}{$\CaseI$}: The state space $\Omega$ is a bounded domain, with boundary $\partial\Omega\in \cC^2$.
    \item \hypertarget{case2}{$\CaseII$}: The state space is $\Omega=\bbR^d$.
\end{itemize}

We assume that the potential function $F$ has the following properties.
\begin{itemize}
    \item[\hypertarget{F-a}{\bf (F-a)}] $F$ is sufficiently smooth, has a bounded gradient, and is bounded from below. More precisely:
\begin{equation}\label{EQ:F Assumptions}
    F\in\cC^2(\Omega),\qquad \nabla F\in L^\infty(\Omega), \qquad \mbox{and}\qquad \inf_\Omega F >-\infty\,.
\end{equation}
\end{itemize}
Note that the property $\nabla F\in L^\infty(\Omega)$ is necessary as it does not directly follow from the first property in \hyperlink{case2}{$\CaseII$}. 
We assume throughout that the normalization constant $$Z=\int_\Omega e^{-F(\bx)/\eps}\,\md\bx<\infty.$$ 

In this work, we use two related ways of partitioning the domain. 
The main analysis in~\Cref{SEC:Hybrid-Regularized,SEC:all_properties} treats the case where the switching interface is an entire level set of the potential.  For a given $F_0\in\mathbb{R}$, define
\[
    \Gamma:=\{\bx\in\Omega:F(\bx)=F_0\}.
\]
In the full-level-set setting, we assume that $\Gamma$ is sufficiently smooth, say $\Gamma\in\mathcal C^1$, and separates $\Omega$ into two regions
\[
    \Omega_- := \{\bx\in\Omega:F(\bx)<F_0\},
    \qquad
    \Omega_+ := \{\bx\in\Omega:F(\bx)>F_0\}.
\]
The whole level set $\Gamma$ is then used as the interface between the two dynamics. 

The exit-time analysis in~\Cref{SEC:Exit} treats a radially symmetric, possibly nonconvex case, where the relevant level set may have several connected components, and the switching interface is chosen to be the outermost one, namely the component adjacent to the exterior region. Until~\Cref{SEC:Exit}, we work exclusively in the full-level-set setting.
 
Our goal is to be able to run two different sampling schemes in $\Omega_-$ and $\Omega_+$ while preserving the target distribution in the whole space $\Omega$. More precisely, we run the overdamped Langevin dynamics~\eqref{EQ:Langevin} in $\Omega_+$, and a derivative-free adaptive diffusion dynamics in $\Omega_-$.
 
For a given $\eps$, we define the functions $a:\Omega\to \bbR$ and $b:\Omega\to \bbR$ as
\begin{equation}\label{EQ:Coefficients True}
    a(\bx)=
\begin{cases}
a_-:=\eps e^{(F(\bx)-F_0)/\eps} & \bx\in \Omega_-\\
a_+:=\eps, & \bx\in \Omega\setminus \Omega_-
\end{cases}\,,
\quad
b(\bx)=
\begin{cases}
b_-:=0, & \bx\in \Omega_-\\
b_+:=-\nabla F(\bx), & \bx\in \Omega\setminus\Omega_-
\end{cases}\,.  
\end{equation}
Note that we scale the diffusion coefficient inside $\Omega_-$ in a way such that $a(\bx)$ is continuous across $\Gamma$. The continuity of $a$ is an essential property to enforce; see Remark~\ref{REM:Continuity of a}.

The hybrid dynamics we are interested in are then
\begin{equation}\label{EQ:SDE HB}
\md X_t^\pm=b_\pm(X_t^\pm)\md t+\sqrt{2a_\pm(X_t^\pm)}\,\md W_t,\quad \mbox{in}\ \Omega_\pm\,.
\end{equation}
In \hyperlink{case1}{$\CaseI$}, that is, when $\Omega$ is bounded, the process in $\Omega_+$ is reflected at $\partial\Omega$ to enforce the no-flux condition at $\partial\Omega$.

The sampling dynamics in the inner region $\Omega_-$ is purely diffusive in nature, in the sense that the drift term is turned off (since $b_-=0$). To keep the stationary distribution Gibbsian, the diffusion coefficient is chosen to have the particular form of $a_-$. This leads to the fact that the weighted conductance density $a_-(\bx)\pi(\bx)$ is constant: $a_-(\bx)\pi(\bx) =
\eps Z^{-1}e^{-F_0/\eps}$. Thus, although the invariant measure remains Gibbsian, the effective conductivity governing the associated Dirichlet form becomes spatially uniform in $\Omega_-$; see more detailed discussions in~\Cref{SEC:Exit} and ~\cite{EnReYa-CAMC24}. Therefore, this dynamics removes the exponential weighting induced by the
potential inside the $\Omega_-$ and replaces it with a flat diffusion geometry. Hence, for a potential whose landscape below the level set $\Gamma$ is complex, the hybrid dynamics can potentially achieve better overall performance than running the overdamped Langevin dynamics in the whole state space $\Omega$; see numerical examples in Section~\ref{SEC:Num}.

The hybrid process $X_t$ is a Markov process on $\Omega$. Its infinitesimal generator is
\begin{equation}\label{EQ:Generator}
    \cL f(\bx) =a(\bx) \Delta f + b(\bx) \cdot \nabla f(\bx)\,.
\end{equation}
The domain of $\cL$, that is, which functions $f$ count as admissible test functions, requires that $f$ and $a \bn_{\Gamma_-}\cdot\nabla f$ be continuous across $\Gamma$. Therefore, the adjoint Fokker--Planck operator $\cL^*$ acts on densities $\rho$ with the dual transmission conditions. That is, the Fokker--Planck system for the hybrid dynamics is
\begin{equation}\label{EQ:FP HB}
\begin{array}{rcll}
\partial_t\rho_\pm
&=&
-\nabla\cdot(b_\pm \rho_\pm)+\Delta(a_\pm \rho_\pm), & \text{in }\Omega_\pm\\[1ex]
(a_-\rho_-)|_{\Gamma_-}  &=&
(a_+\rho_+)|_{\Gamma_+}, & \text{on}\ \Gamma\\
\bn_\Gamma\cdot J_-(\rho_-)|_{\Gamma_-}
&=&
\bn_\Gamma\cdot J_+(\rho_+)|_{\Gamma_+}, &\text{on}\ \Gamma
\end{array}
\end{equation}
where the flux
\begin{equation}\label{EQ:Flux}
    J_\pm(\rho_\pm):=b_\pm \rho_\pm-\nabla(a_\pm\rho_\pm)\,,
\end{equation}
and $\bn_\Gamma$ is the unit normal to $\Gamma$ (pointing, say, from $\Omega_-$ to $\Omega_+$). Since the coefficients $(a,b)$ are discontinuous across $\Gamma$, the limiting Fokker--Planck equation takes the form of a transmission problem. The matching conditions in~\eqref{EQ:FP HB} express conservation of the weighted conductance density and the probability flux across the switching interface. These are the natural transmission conditions for a conservative diffusion process in a heterogeneous environment. With the continuity of $a(\bx)$ across the interface $\Gamma$, the first interface condition implies the continuity of the density across the interface: $\rho|_{\Gamma_-}=\rho|_{\Gamma_+}$. 

When $\Omega$ is bounded, the no-flux boundary condition on the distribution level is described as
\begin{equation}\label{EQ:FP BC}
\bn_{\partial\Omega}\cdot J_+(\rho_+)|_{\partial\Omega}=0\,.
\end{equation}

We intentionally choose the interface to switch the dynamics as a level set of the potential rather than an arbitrary hypersurface. While this choice appears to be quite restrictive, it has some advantages. First, it ensures that the matching condition for continuity of $a$ can be enforced by a single scalar constant $\gamma:=\eps e^{-F_0/\eps}$, since $F$ is constant on $\Gamma$. Second, from the sampling perspective, the value of $F$ provides a natural energetic criterion for deciding where to modify the dynamics: the hybridization is activated in regions where the potential is below a prescribed threshold and metastability is expected to dominate.

\subsection{Diffuse-interface regularization} 
\label{SUBSEC:Hybrid-Regularized}

The hybrid sampling dynamics we proposed has the drift coefficient that is discontinuous across the switching interface $\Gamma$, and the associated Fokker--Planck equation~\eqref{EQ:FP HB} therefore takes the form of a transmission problem. This sharp-interface system is not very friendly for analyzing the performance of the system. 

Rather than working directly with the sharp-interface system, we approximate the interface by a thin transition layer of width $\cO(\delta)$, for some small $\delta>0$, across which the coefficients vary smoothly. The regularized dynamics can therefore be viewed as a diffuse-interface approximation of the hybrid process. As $\delta\to 0$, the transition layer collapses and the sharp-interface hybrid dynamics is formally recovered. To be precise, let $\mathrm{dist}(\bx,\Gamma)$ denote signed distance
(negative inside $\Omega_-$). For a given $\delta>0$, we denote
\[
    \chi_\delta(\bx)=\chi(\mathrm{dist}(\bx, \Gamma)/\delta)
\]
with a fixed smooth profile $\chi:\bbR\to[0,1]$ satisfying $\chi\equiv 1$ on $(-\infty,-1]$ and $\chi\equiv 0$ on $[1,\infty)$. See~\Cref{fig:chi} for an illustration of a candidate $\chi$.

\begin{figure}[h]
\centering
\begin{tikzpicture}[scale=1.1]

\draw[->] (-3.5,0) -- (3.5,0) node[right] {$s$};
\draw[->] (0,-0.2) -- (0,1.4) node[above] {$\chi(s)$};

\draw[dashed] (-1.5,1) -- (1.5,1);
\node[left] at (-3.4,1) {$1$};

\draw[dashed] (-1,0) -- (-1,1.2);
\draw[dashed] (1,0) -- (1,1.2);
\node[below] at (-1,-0.2) {$-1$};
\node[below] at (1,-0.2) {$1$};
\draw[thick]
(-3,1)
.. controls (-2,1) and (-1.6667,1) ..
(-1,1)
.. controls (-0.3333,1) and (0.3333,0) ..
(1,0)
.. controls (1.6667,0) and (2,0) ..
(3,0);

\node[above] at (-2.2,1.05) {$\chi(s)=1$};
\node[above] at (2.2,0.05) {$\chi(s)=0$};

\draw[<->] (-1,-0.15) -- (1,-0.15);
\node[below] at (0,-0.8) {transition layer};

\end{tikzpicture}
\caption{An example of the smooth function $\chi(s)$.}\label{fig:chi}
\end{figure}

We then introduce the smoothed coefficients as
\begin{equation}\label{EQ:Coefficients Regularized}
\begin{array}{rcl}
a_\delta(\bx) &=& \chi_\delta(\bx)\,\eps e^{(F(\bx)-F_0)/\eps}+(1-\chi_\delta(\bx))\,\eps\,,\\[1ex]
b_\delta(\bx) &=& \nabla a_\delta(\bx)-\dfrac{a_\delta(\bx)}{\eps}\,\nabla F(\bx)\,.
\end{array} 
\end{equation}
Note that instead of directly regularizing both $a$ and $b$, our strategy here is to regularize the diffusion coefficient to $a_\delta$ and then select $b_\delta$ through the Gibbs-preserving relation. This is of key importance as it allows the dynamics to generate the same Gibbs distribution in the whole state space $\Omega$ for every $\delta>0$; see more discussions at the end of this section.

Formally, as $\delta\to0$,
\[
a_\delta(\bx)\to a(\bx),\qquad b_\delta(\bx)\to b(\bx)
\]
for every $\bx\in\Omega\setminus\Gamma$, and hence almost everywhere in $\Omega$.

\subsection{Basic properties of the regularized dynamics}

We first record the basic bounds on the regularized coefficients. These bounds are uniform in the regularization parameter and are the main reason the diffuse-interface approximation can be used to pass to the sharp-interface limit.

\begin{lemma}\label{LMMA:Uniform Coeff}
Assume that $F(\bx)\in \cC^2(\Omega)$ and the level set $\Gamma$ is at least $\cC^1$. Then the following holds.\\[1ex]
(i) The coefficient $a_\delta$ is globally uniformly bounded from below, that is, $\exists\ a_{\min}>0$, independent of $\delta$, such that
\begin{equation}\label{EQ:Uniform Lower Bound A}
    0<a_{\min}\le a_\delta(\bx),\ \ \forall\  \bx\in\Omega\,.
\end{equation}
(ii) In \hyperlink{case1}{$\CaseI$}, there exist constants $\Lambda$ and $M$, independent of $\delta$, such that
\begin{equation}\label{EQ:Uniform Bound Upper Bound AB}
0<a_{\min}\le a_\delta(\bx)\le \Lambda<\infty
\quad\forall \bx\in\Omega,
\qquad \|b_\delta\|_{L^\infty(\Omega)}\le M\,.
\end{equation}
(iii) In \hyperlink{case2}{$\CaseII$}, for any compact set $K\Subset\Omega$, there exists $\Lambda_K$ and $M_K$, independent of $\delta$, such that
\begin{equation}\label{EQ:Uniform Bound Upper Bound AB Local}
\|a_\delta\|_{L^\infty(K)}\le \Lambda_K,\qquad \|b_\delta\|_{L^\infty(K)}\le M_K\,.
\end{equation}
(iv) We have that
\[
    a_\delta(\bx)\to a(\bx),\quad b_\delta(\bx)\to b(\bx),\ \text{a.e.},\ \mbox{as}\ \delta\to 0\,,
\]
where $a(\bx)$ and $b(\bx)$ are defined in~\eqref{EQ:Coefficients True}.
\end{lemma}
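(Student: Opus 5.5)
The plan is to work directly from the explicit formula~\eqref{EQ:Coefficients Regularized}. The one structural fact used throughout is that $a_\delta$ is a convex combination, with weights $\chi_\delta$ and $1-\chi_\delta$, of the two functions $\eps e^{(F-F_0)/\eps}$ and $\eps$.

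For (i), I split $\Omega$ according to whether $\chi_\delta(\bx)=0$ or not. If $\chi_\delta(\bx)=0$ then $a_\delta=\eps$. Otherwise $a_\delta(\bx)\ge \min\{\eps e^{(F(\bx)-F_0)/\eps},\eps\}$. Since $\inf_\Omega F>-\infty$ by \hyperlink{F-a}{\bf (F-a)}, this is at least
\[
a_{\min}:=\eps\min\{1,e^{(\inf F-F_0)/\eps}\}>0,
\]
which is clearly independent of $\delta$.

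For (ii) and (iii), the upper bound on $a_\delta$ follows in the same way on any set $K$ where $F$ is bounded above, with $\Lambda_K=\eps\max\{1,e^{(\sup_K F-F_0)/\eps}\}$. In \CaseI\ I take $K=\Omega$, using that $F$ is bounded on the bounded domain, either via continuity up to the $\cC^2$ boundary or via $\nabla F\in L^\infty$ on a bounded connected set.

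The drift requires the explicit gradient
\[
\nabla a_\delta=\chi_\delta\,e^{(F-F_0)/\eps}\nabla F+\bigl(\eps e^{(F-F_0)/\eps}-\eps\bigr)\nabla\chi_\delta,\qquad \nabla\chi_\delta=\delta^{-1}\chi'(\mathrm{dist}/\delta)\nabla\mathrm{dist}.
\]
The first term, together with $-a_\delta\nabla F/\eps$, is bounded by a constant times $\|\nabla F\|_{L^\infty}$ times $\Lambda_K/\eps$, uniformly in $\delta$. The main obstacle is the second term, where $\nabla\chi_\delta$ has size $\delta^{-1}$.

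The key observation is that $\nabla\chi_\delta$ is supported in the layer $\{|\mathrm{dist}(\bx,\Gamma)|\le\delta\}$, and there the prefactor is small. Because $F=F_0$ on $\Gamma$, for $\bx$ in the layer with nearest point $\by\in\Gamma$, the mean value theorem gives
\[
|F(\bx)-F_0|\le\|\nabla F\|_{L^\infty}\,|\bx-\by|\le\|\nabla F\|_{L^\infty}\,\delta.
\]
Hence $|\eps e^{(F-F_0)/\eps}-\eps|\le C\delta$, with $C$ depending on $\|\nabla F\|_{L^\infty}$, $\eps$, and a bound $e^{\|\nabla F\|_\infty\delta/\eps}\le e^{\|\nabla F\|_\infty/\eps}$ for $\delta\le1$. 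This cancels the factor $\delta^{-1}$ and leaves a bound $C\|\chi'\|_\infty\|\nabla\mathrm{dist}\|_\infty$. Here I need $\mathrm{dist}$ to be Lipschitz with constant $1$, which holds for any signed distance a.e., and smooth in a tubular neighborhood once $\Gamma$ is regular enough. I would state this as an a.e./Lipschitz bound so that only the $\cC^1$ regularity of $\Gamma$ is needed.

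The mean value step needs the segment $[\bx,\by]$ to lie in $\Omega$. In \CaseII\ this is automatic. In \CaseI\ it holds for small $\delta$ provided $\Gamma$ stays a positive distance from $\partial\Omega$, and I would assume this, since the level set is interior. Otherwise one can use the intrinsic Lipschitz constant of $F$ on $\Omega$ coming from the $\cC^2$ boundary. On a compact $K$ in \CaseII\ the same argument gives $M_K$, using $\sup_K F$ and the global bound on $\nabla F$.

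For (iv), fix $\bx\notin\Gamma$. Then $|\mathrm{dist}(\bx,\Gamma)|>0$, so for $\delta<|\mathrm{dist}(\bx,\Gamma)|$ we have $\chi_\delta\equiv1$ or $\chi_\delta\equiv0$ on a neighborhood of $\bx$. In the first case $\bx\in\Omega_-$ and $a_\delta=\eps e^{(F-F_0)/\eps}$, so $b_\delta=e^{(F-F_0)/\eps}\nabla F-e^{(F-F_0)/\eps}\nabla F=0$. In the second case $a_\delta=\eps$ and $b_\delta=-\nabla F$. These agree with~\eqref{EQ:Coefficients True} exactly, so the coefficients converge pointwise off $\Gamma$. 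Since $\Gamma$ is a $\cC^1$ hypersurface, it has Lebesgue measure zero, and the convergence holds a.e.
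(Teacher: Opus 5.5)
Your proposal is correct and follows essentially the same route as the paper. The convex-combination structure of $a_\delta$ gives (i) and the upper bounds. The drift splits into a $\delta$-independent part plus the layer term $\varepsilon(E-1)\nabla\chi_\delta$, whose $\delta^{-1}$ factor is cancelled by $|E-1|=O(\delta)$ on the strip because $E=1$ on $\Gamma$. Part (iv) is pointwise convergence off the null set $\Gamma$.

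The differences are cosmetic. You bound $\chi_\delta E\nabla F-a_\delta\nabla F/\varepsilon$ crudely via $\Lambda_K$, whereas it simplifies exactly to $-(1-\chi_\delta)\nabla F$, which would even give a global bound. You are also more explicit than the paper about the nearest-point mean-value step and about the a.e.\ differentiability of the signed distance for a merely $C^1$ interface.
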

\begin{proof} 
Let
\[
E(\bx):=e^{(F(\bx)-F_0)/\eps}.
\]
Then
\[
a_\delta(\bx)=\eps\bigl(1+\chi_\delta(\bx)(E(\bx)-1)\bigr)\,.
\]
Since $0\le \chi_\delta\le 1$, $a_\delta(\bx)$ is a convex combination of
$\eps E(\bx)$ and $\eps$.\\[1ex]
(i) The lower bound of $a_\delta$ follows directly from the assumption in~\eqref{EQ:F Assumptions} that $F$ is globally bounded from below. More precisely, we have that
$a_\delta\ge a_{\min}:=\eps\min\Bigl\{1,e^{(F_{\min}-F_0)/\eps}\Bigr\}$, where $F_{\min}:=\inf_\Omega F$.\\[1ex]
(ii) When $\Omega$ is bounded, $F$ is bounded from above on $\Omega$ by continuity. Let $F_{\max}:=\sup_\Omega F$. Then, we have that $a_\delta \le \Lambda$ with $\Lambda:=\eps\max\Bigl\{1,e^{(F_{\max}-F_0)/\eps}\Bigr\}$.

To bound $b_\delta$, we first observe, after some simple algebra, that 
\[
b_\delta
=
-(1-\chi_\delta)\nabla F
+\eps(E-1)\nabla\chi_\delta
\]
The first term is bounded by some $M_1$ by the assumption on $F$ in~\eqref{EQ:F Assumptions}:
\[
|(1-\chi_\delta)\nabla F|
\le |\nabla F|\le M_1
\]
For the second term, we note that
\[
\nabla\chi_\delta(\bx)
=
\frac1\delta
\chi'\!\left(\frac{\operatorname{dist}(\bx,\Gamma)}{\delta}\right)
\nabla\operatorname{dist}(\bx,\Gamma)\,.
\]
This gives that, for some $M_\chi>0$,
\[
|\nabla\chi_\delta(\bx)|\le \frac{M_\chi}{\delta},
\]
and $\nabla\chi_\delta$ is supported in the strip
\[
U_\delta:=\{\bx:\ |\operatorname{dist}(\bx,\Gamma)|<\delta\}.
\]
Because $F=F_0$ on $\Gamma$, we have that $E=1$ on $\Gamma$. Since $F\in \cC^2$,
$E$ is at least Lipschitz near $\Gamma$. Therefore, for $\bx\in U_\delta$, we have, for some Lipschitz constant $M_E$,
\[
|E(\bx)-1|
\le M_E\, |\operatorname{dist}(\bx,\Gamma)| 
\le M_E\delta.
\]
We now combine the above bounds to have
\[
\eps |E(\bx)-1|\,|\nabla\chi_\delta(\bx)|
\le
\eps(M_E \delta)\frac{M_\chi}{\delta} =:M_2.
\]
Taking $M:=M_1+M_2$ finishes the argument.\\[1ex]
(iii) follows from the same argument in (ii) on compact subsets in $\Omega=\bbR^d$, and (iv) follows directly from the definition of $a_\delta$ and $b_\delta$.
\end{proof}

The regularized dynamics is therefore
\begin{equation}\label{EQ:SDE Regularized}
\md X_t=b_\delta(X_t)\md t+\sqrt{2a_\delta(X_t)}\,\md W_t,
\end{equation}
and the associated Fokker--Planck equation is
\begin{equation}\label{EQ:FP Regularized}
\begin{array}{rcll}
\partial_t\rho_\delta
&=&
-\nabla\cdot(b_\delta\rho_\delta)+\Delta(a_\delta\rho_\delta), & \text{in }\Omega.
\end{array}
\end{equation}
We also define the regularized flux
\[
J_\delta(\rho_\delta):=b_\delta\rho_\delta-\nabla(a_\delta\rho_\delta)\,.
\]
The regularized coefficients $(a_\delta, b_\delta)$ are smooth and uniformly elliptic. Hence, the regularized Fokker--Planck equation becomes a classical uniformly parabolic drift-diffusion equation, allowing the use of standard entropy methods, compactness arguments, and parabolic regularity theory to analyze the system. We can then pass to the limit $\delta\to 0$ to understand the original sharp-interface transmission problem~\eqref{EQ:FP HB}.

When $\Omega$ is bounded, that is, in \hyperlink{case1}{$\CaseI$}, the no-flux boundary condition~\eqref{EQ:FP BC} is imposed for the regularized system:
\begin{equation}\label{EQ:FP Regularized BC}
J_\delta(\rho_\delta)\cdot \bn_{|\partial\Omega} = 0 \qquad \text{on }\partial\Omega.
\end{equation}
 
A key observation is that the Gibbs density $\pi$ defined in~\eqref{EQ:pi} is a stationary solution for the regularized system, independent of $\delta$, since:
\[
    J_\delta(\pi)=b_\delta\pi-\nabla(a_\delta\pi)\equiv 0,\quad \forall \delta>0.
\]
This happens because we choose a special way to regularize $a$ and adjust $b$ accordingly in~\eqref{EQ:Coefficients Regularized}. A naive independent regularization of $a$ and $b$ will change the Gibbs distribution inside the thin layer around the interface $\Gamma$.

\begin{remark}\label{REM:Continuity of a}
The enforcement of the continuity of $a(\bx)$ across $\Gamma$ is essential. Indeed, if $a(\bx)$ had a jump discontinuity across the interface, then the gradient of the regularization would scale like $\cO(\delta^{-1})$ inside the transition layer, producing unbounded drift terms in $b_\delta$. The continuity of $a(\bx)$ ensures that the regularized coefficients $(a_\delta, b_\delta)$ remain uniformly bounded as $\delta\to 0$, which is crucial for the compactness argument in the next sections for solutions to the regularized Fokker--Planck equation.
\end{remark}

From a computational perspective, the regularized dynamics may also be viewed as a practical implementation of the hybrid sampler. Indeed, any numerical discretization of a sharp switching interface introduces an effective smoothing at the mesh scale, so the parameter $\delta$ can also be interpreted as an interface-resolution parameter.

The goal of the next section is to establish that the hybrid dynamics preserves the target Gibbs distribution and converges to it at an exponential rate, uniformly with respect to the regularization parameter
$\delta$.

The main difficulty is that the limiting hybrid system~\eqref{EQ:FP HB} is a transmission problem with discontinuous coefficients across the interface $\Gamma$, for which standard parabolic theory does not directly apply. Our strategy is therefore to analyze the regularized system~\eqref{EQ:FP Regularized} and then pass to the limit $\delta\to 0$.

For each $\delta>0$, the regularized coefficients $(a_\delta,b_\delta)$ are smooth and uniformly elliptic (Lemma~\ref{LMMA:Uniform Coeff}), so the regularized Fokker--Planck equation~\eqref{EQ:FP Regularized} is a classical uniformly parabolic drift-diffusion equation. It admits smooth classical solutions with smooth initial data~\cite{LaSoUr-Book68}. In the rest of this work, we assume that the initial data we consider, $\rho_{0}$, satisfies
\begin{equation}
\label{EQ:IC ASS}
\begin{array}{lllll}
    \mbox{\hyperlink{case1}{$\CaseI$}:} & \rho_0\in C^\infty(\overline{\Omega}), & \rho_0>0, &
\int_\Omega\rho_0\,\md\bx=1, & \KL(\rho_0\,\|\,\pi)<+\infty\\[1ex]
    \mbox{\hyperlink{case2}{$\CaseII$}:} & \rho_0\in\cS(\bbR^d),& \rho_0>0,& \int_{\bbR^d}\rho_0\,\md\bx=1,& \KL(\rho_0\,\|\,\pi)<+\infty
\end{array}
\end{equation}
where $\cS(\bbR^d)$ is the standard Schwartz space of rapidly decreasing functions on $\bbR^d$.

We recall this classical result here for convenience.
\begin{theorem}[\cite{LaSoUr-Book68}]
\label{THM:FP Regularized Solution Theory}
Fix $\delta > 0$ and $T > 0$, and assume that $F$ satisfies \hyperlink{F-a}{\bf (F-a)} and that $\rho_0$ satisfies \eqref{EQ:IC ASS}. Then, the regularized Fokker--Planck equation~\eqref{EQ:FP Regularized} (with the no-flux boundary condition \eqref{EQ:FP Regularized BC} in \hyperlink{case1}{$\CaseI$}) admits a unique classical solution
\[
  \rho_\delta \in C^\infty\bigl([0,T] \times \overline{\Omega}\bigr)
\]
in \textup{\hyperlink{case1}{$\CaseI$}}, respectively
\[
  \rho_\delta \in C^\infty\bigl([0,T] \times \mathbb{R}^d\bigr),
  \qquad \text{with } \rho_\delta(t,\cdot) \in \mathcal{S}(\mathbb{R}^d)
  \text{ for every } t \in [0,T],
\]
in \textup{\hyperlink{case2}{$\CaseII$}}. Moreover, $\rho_\delta(t, \bx) > 0$ for every $(t, \bx) \in (0, T] \times \Omega$ and
\begin{equation}\label{EQ:Mass Conservation}
    \int_\Omega \rho_\delta(t,\bx)\,\md\bx=1,\quad \forall t\in [0, T]\,.
\end{equation}
\end{theorem}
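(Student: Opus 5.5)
The plan is to rewrite \eqref{EQ:FP Regularized} in a form where classical linear parabolic theory applies directly. Since $b_\delta=\nabla a_\delta-\frac{a_\delta}{\eps}\nabla F$, the Fokker--Planck operator is
\[
-\nabla\cdot(b_\delta\rho)+\Delta(a_\delta\rho)=\nabla\cdot\Big(a_\delta\nabla\rho+\frac{a_\delta}{\eps}\rho\nabla F\Big)
=\nabla\cdot\Big(a_\delta\,\pi\,\nabla\frac{\rho}{\pi}\Big).
\]
This is a divergence-form, uniformly parabolic operator. Its coefficients are $a_\delta\in C^\infty$ for fixed $\delta$, with $a_\delta\ge a_{\min}$ by Lemma~\ref{LMMA:Uniform Coeff}(i), and $a_\delta\nabla F/\eps$. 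The no-flux condition \eqref{EQ:FP Regularized BC} becomes the conormal condition $a_\delta\pi\,\partial_{\bn}(\rho/\pi)=0$.

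\emph{Case I.} On bounded $\Omega$ with $\partial\Omega\in\cC^2$, Lemma~\ref{LMMA:Uniform Coeff}(ii) gives $\delta$-independent bounds on $a_\delta$ and $b_\delta$. The first step is existence and uniqueness of a weak solution with the oblique/conormal condition, via Galerkin approximation in $L^2(0,T;H^1)$ (Ladyzhenskaya--Solonnikov--Ural'tseva). The second step is regularity: higher interior and boundary regularity follows from Schauder estimates, bootstrapped using the smoothness of the coefficients for fixed $\delta$. Here I would note that full $C^\infty$ up to the boundary and at $t=0$ needs smooth $F$ and $\partial\Omega$, together with compatibility conditions on $\rho_0$. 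I would state that these regularity hypotheses are understood, or settle for $C^{2+\alpha,1+\alpha/2}$, which suffices for all later computations. The third step is positivity, which follows from the strong maximum principle (and the Hopf lemma at the boundary) applied to the nondivergence form of the equation for $\rho$, given $\rho_0>0$. The fourth step is mass conservation: integrate the equation and use the divergence theorem together with the zero-flux boundary condition.

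\emph{Case II.} On $\bbR^d$, $a_\delta$ grows like $e^{F/\eps}$ inside the layer region only where $\chi_\delta\ne0$, i.e.\ near $\Omega_-$. Outside a neighborhood of $\Omega_-$ we have $a_\delta=\eps$ and $b_\delta=-\nabla F$, which is bounded by \hyperlink{F-a}{\bf (F-a)}. If $\Omega_-$ is bounded, the coefficients are therefore globally bounded with bounded first derivatives. The steps are then as follows. (1) Construct the fundamental solution by the parametrix method for bounded, uniformly parabolic, H\"older coefficients, which gives existence, uniqueness in a growth class, and Gaussian bounds. (2) Propagate Schwartz decay by weighted energy estimates: apply $\partial^\alpha$ and multiply by weights $(1+|\bx|^2)^k$, closing Gr\"onwall-type bounds using boundedness of the derivatives of the coefficients. (3) Obtain positivity from the strong maximum principle, or from positivity of the heat kernel. (4) Obtain mass conservation by integrating against cutoffs $\phi(\bx/R)$ and letting $R\to\infty$, using the decay of $\rho_\delta$ and $\nabla\rho_\delta$.

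The main obstacle is Case II. Propagating Schwartz-class decay requires bounds on all derivatives of $F$, not just $\nabla F\in L^\infty$. I would handle this by adding the tacit assumption that $F$ has bounded derivatives of all orders (or at least enough of them). The alternative is to weaken the conclusion to weighted Sobolev decay, which is all that the later entropy arguments need.
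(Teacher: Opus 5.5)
Your plan is essentially the paper's proof. The paper likewise cites classical linear parabolic theory \cite{LaSoUr-Book68} in Case~I, uses Aronson-type Gaussian bounds for the divergence-form equation with bounded drift $\frac{a_\delta}{\eps}\nabla F$ to propagate decay in Case~II (rather than your parametrix, which needs H\"older coefficients), and gets mass conservation from the divergence theorem; your maximum-principle argument supplies the positivity that the paper asserts without proof. Your caveats are justified and are points the paper's proof passes over: $F\in\cC^2$ cannot give $C^\infty$ coefficients, \eqref{EQ:IC ASS} imposes no compatibility with the no-flux condition, and decay of derivatives needs bounds on higher derivatives of $F$ (so does your claim that $b_\delta=-\nabla F$ has bounded first derivatives outside the layer). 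Your hedge ``if $\Omega_-$ is bounded'' is unnecessary, because $\nabla F\in L^\infty$ and $Z<\infty$ force $\{F<F_0\}$ to be bounded: infinitely many points with $F<F_0$ at mutual distance $\ge 2$ would give disjoint unit balls on which $F\le F_0+\|\nabla F\|_{L^\infty}$, making $Z=\infty$.
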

\begin{proof}
By Lemma~\ref{LMMA:Uniform Coeff}, the coefficients $(a_\delta, b_\delta)$ are smooth and uniformly elliptic, with $a_\delta \geq a_{\min} > 0$, so \eqref{EQ:FP Regularized} is a linear uniformly parabolic equation with smooth coefficients. Existence, uniqueness, and $C^\infty$ regularity up to the boundary/initial time follow from classical parabolic theory: in \textup{\hyperlink{case1}{$\CaseI$}} this is standard for smooth initial data compatible with the no-flux condition~\cite[Ch.~IV--V]{LaSoUr-Book68}. In \textup{\hyperlink{case2}{$\CaseII$}} the same holds on $\bbR^d$. In divergence form, \eqref{EQ:FP Regularized} has bounded drift $\tfrac{a_\delta}{\eps}\nabla F$, so Aronson's Gaussian upper bounds for the fundamental solution and its derivatives~\cite{Ar-BAMS-67,Friedman-Book08} propagate the Schwartz decay of $\rho_0$ to $\rho_\delta(t,\cdot)$. In particular, $\rho_\delta(t,\cdot)$ and the flux $J_\delta(\rho_\delta)(t,\cdot)$ decay faster than any polynomial as $|\bx|\to\infty$, which is all that is needed for the integrations by parts at infinity used here and below. Integrating~\eqref{EQ:FP Regularized} over $\Omega$ and using the divergence theorem gives
\[
  \frac{d}{dt} \int_\Omega \rho_\delta(t, \bx)\, \md\bx
    = -\int_{\partial \Omega} \bn_{|\partial\Omega}\cdot J_\delta(\rho_\delta)\, dS = 0,
\]
where the boundary integral vanishes by the no-flux condition~\eqref{EQ:FP Regularized BC} in \hyperlink{case1}{$\CaseI$} and by Schwartz decay in \hyperlink{case2}{$\CaseII$}.~\eqref{EQ:Mass Conservation} then follows from the assumption that $\int_\Omega \rho_0\, \md\bx = 1$.
\end{proof}

\section{Entropy estimates, sharp-interface limit, and exponential convergence}\label{SEC:all_properties}

In this section, we analyze the regularized dynamics uniformly in $\delta$ and then pass to the sharp-interface limit. The key estimate is the entropy dissipation identity, which gives compactness and transfers the exponential convergence rate to the limiting hybrid dynamics.

\subsection{Uniform entropy dissipation}
\label{SUBSEC:Entropy Bound}

For convenience, we first rewrite~\eqref{EQ:FP Regularized} into the standard divergence (Onsager) form
\begin{equation}\label{EQ:FP Onsager Form}
\partial_t\rho_\delta
=
\nabla\cdot\left(a_\delta(\bx)\,\rho_\delta\,\nabla\log\frac{\rho_\delta}{\pi}\right)
\qquad \text{in }\Omega,
\end{equation}
using the identity $\nabla(a_\delta\rho) - b_\delta\rho = a_\delta\rho\,\nabla\log\!\left(\frac{\rho}{\pi}\right)$. The boundary condition~\eqref{EQ:FP Regularized BC}, required when $\Omega$ is bounded, that is, in \hyperlink{case1}{$\CaseI$}, then takes the form 
\begin{equation}\label{EQ:FP Onsager Form BC}
\left(a_\delta\rho_\delta\,\nabla\log(\rho_\delta/\pi)\right)\cdot \bn_{|\partial\Omega}=0, \qquad \mbox{on}\ \partial\Omega\,.
\end{equation}
We define the standard relative entropy energy
\begin{equation}\label{EQ:KL}
    \cE(\rho):=\KL(\rho\|\pi)=\int_\Omega \rho\log\frac{\rho}{\pi}\,\md \bx.
\end{equation}
Then the PDE~\eqref{EQ:FP Onsager Form} can be interpreted as a gradient flow of the entropy energy $\cE(\rho_\delta)$ in a Wasserstein-type metric $\mathsf W_\delta$ with position-dependent mobility $m_\delta(\bx,\rho):=a_\delta \rho$~\cite{EnReYa-arXiv24}. That is,
\begin{equation}\label{eq:GF_metric}
\partial_t\rho_\delta = \nabla_{\mathsf W_\delta}\cE(\rho_\delta)
\quad\Longleftrightarrow\quad
\partial_t\rho_\delta=\nabla\cdot \!\left(a_\delta\rho_\delta\nabla\frac{\delta \cE}{\delta \rho_\delta}\right)
=
\nabla\cdot \!\left(a_\delta\rho_\delta\,\nabla \log\frac{\rho_\delta}{\pi}\right)\,.
\end{equation}
Here $\mathsf W_\delta$ is the metric whose tangent vectors $\sigma=-\nabla\cdot(a_\delta\rho\nabla\phi)$ carry the weighted kinetic energy $\int_\Omega a_\delta\rho|\nabla\phi|^2\,\md\bx$; since $\delta\cE/\delta\rho=\log(\rho/\pi)+1$, the steepest descent of $\cE$ is precisely the Onsager form~\eqref{EQ:FP Onsager Form}. When $a_\delta$ is constant, $\mathsf W_\delta$ reduces, up to a time rescaling, to the usual quadratic Wasserstein metric~\cite{JoKiOt-SIAM98,AmGiSa-Book08}.

In the rest of the work, we assume that the potential $F$ is such that the stationary distribution $\pi$ in~\eqref{EQ:pi} satisfies the following properties:
\begin{itemize}
    \item [\hypertarget{pi-a}{($\pi$-a)}] (Log-Sobolev Inequality (LSI)) For all sufficiently smooth $\rho$ with $\dint_\Omega\rho\, \md\bx=1$ and $\rho\ll \pi$, we have
\begin{equation}\label{EQ:LSI}
\KL(\rho\|\pi)\le \frac{1}{2\lambda_{\mathrm{LSI}}}\int_\Omega \rho\left |
\nabla\log\frac{\rho}{\pi}\right |^2 d \bx,
\end{equation}
for some LSI constant $\lambda_{\mathrm{LSI}}>0$.
\item[\hypertarget{pi-b}{($\pi$-b)}] (Exponential Moment Condition) In \hyperlink{case2}{$\CaseII$}, we have that
\begin{equation}\label{EQ:Moment Cond}
    \dint_{\Omega} e^{\alpha|\bx|^2}\pi(\bx)\md \bx<\infty,\quad \mbox{for some}\ \alpha>0\,.
\end{equation}
\end{itemize}
Log-Sobolev inequalities for $\pi$, such as~\eqref{EQ:LSI}, are available under various scenarios. On bounded domains, LSI for $\pi$ is often available under mild regularity, though the LSI constants may depend on $\Omega$, $F$, and $\eps$~\cite{BaGeLe-Book14, HoSt-JSP87, Ledoux-SP04}.

The exponential moment condition~\eqref{EQ:Moment Cond} ensures that when $\KL(\rho\|\pi)$ is finite, $\rho$ has finite second moment. This is classical~\cite{AmGiSa-Book08,DuEl-Book97}. We reproduce it in the following lemma.
\begin{lemma}\label{LMMA:Moment}
In \hyperlink{case2}{$\CaseII$}, assume that $\pi$ satisfies the assumption \hyperlink{pi-b}{\rm ($\pi$-b)}. Then, for any sufficiently regular $\rho$ such that $\rho\ge0$, $\int_\Omega \rho \md\bx =1$, and $\KL(\rho\|\pi)\le M<+\infty$ for some constant $M$, there exists constant $c=c(M,\alpha,\pi)$ such that
\[
\int_{\Omega}|\bx|^2\rho(\bx)\md \bx \le c\,.
\]
\end{lemma}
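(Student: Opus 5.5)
The approach is the Donsker--Varadhan (Gibbs) variational inequality, equivalently the Fenchel--Young inequality for $s\log s$. For any measurable $g$ with $\int_\Omega e^{g}\pi\,\md\bx<\infty$ and any probability density $\rho\ll\pi$, one has
\[
\int_\Omega g\,\rho\,\md\bx \le \KL(\rho\|\pi) + \log\int_\Omega e^{g}\pi\,\md\bx .
\]
We apply this with $g(\bx)=\alpha|\bx|^2$, where $\alpha>0$ is the constant from \hyperlink{pi-b}{($\pi$-b)}. This gives
\[
\alpha\int_\Omega |\bx|^2\rho\,\md\bx \le M + \log\int_\Omega e^{\alpha|\bx|^2}\pi\,\md\bx ,
\]
so the lemma holds with $c:=\alpha^{-1}\bigl(M+\log\int_\Omega e^{\alpha|\bx|^2}\pi\,\md\bx\bigr)$. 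This constant depends only on $M$, $\alpha$ and $\pi$, and it is finite by \eqref{EQ:Moment Cond}.

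To make the argument self-contained, I would derive the variational inequality directly. First apply the pointwise Young inequality $uv\le u\log u - u + e^{v}$ for $u\ge0$ and $v\in\bbR$, with $u=\rho/\pi$ and $v=g$. Multiplying by $\pi$ and integrating gives
\[
\int g\rho \le \KL(\rho\|\pi) - 1 + \int e^{g}\pi .
\]
Next replace $g$ by $g-\log\int e^{g}\pi$, which yields the logarithmic form. An alternative derivation defines the tilted probability measure $\mu=e^{g}\pi/\int e^{g}\pi$ and uses $\KL(\rho\|\mu)\ge0$, which follows from Jensen's inequality. The quantity $\KL(\rho\|\mu)$ expands exactly to $\KL(\rho\|\pi)-\int g\rho+\log\int e^{g}\pi$.

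The only point requiring care is integrability. We need $\int g\rho$ to be well-defined, which holds because $g\ge0$, and we need every term to be finite or to be handled by monotone convergence. To avoid any issue from $\infty-\infty$ expressions, I would apply the inequality to the bounded truncations $g_n=\min(g,n)$. For these, all integrals are finite, and $\int e^{g_n}\pi\le\int e^{g}\pi$. Letting $n\to\infty$ and using monotone convergence on $\int g_n\rho$ then gives the claim. I do not expect any genuine obstacle beyond this bookkeeping.
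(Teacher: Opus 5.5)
Your proposal is correct and follows the paper's argument: the paper also applies the Donsker--Varadhan/Gibbs variational bound with a quadratic test function $\theta|\bx|^2$. The paper takes $\theta\in(0,\alpha)$, whereas your choice $\theta=\alpha$ is equally valid because \eqref{EQ:Moment Cond} assumes the exponential moment is finite at $\alpha$; your derivation via Young's inequality and the truncations $g_n=\min(g,n)$ simply supplies the details the paper leaves to a citation.
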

\begin{proof}
The statement is trivial in \hyperlink{case1}{$\CaseI$}. In \hyperlink{case2}{$\CaseII$}, this is the classical Donsker--Varadhan/Gibbs variational bound~\cite{DuEl-Book97,AmGiSa-Book08}: taking $\Phi(\bx)=\theta|\bx|^2$ with $\theta\in(0,\alpha)$ in $\KL(\rho\|\pi)=\sup_\Phi\{\int\Phi\rho\,\md\bx-\log\int e^\Phi\pi\,\md\bx\}$ gives $\theta\int|\bx|^2\rho\,\md\bx\le\KL(\rho\|\pi)+\log\int e^{\theta|\bx|^2}\pi\,\md\bx$, and the last term is finite by~\hyperlink{pi-b}{($\pi$-b)}.
\end{proof}

We define the entropy dissipation functional (or the weighted Fisher information)
\[
\mathcal I_{a_\delta}(\rho_\delta\|\pi):=\int_\Omega a_\delta(\bx)\,\rho_\delta(\bx)\left|\nabla\log\frac{\rho_\delta}{\pi}\right|^2\md \bx\,.
\]
The following result is standard; see, for instance, ~\cite{ArMaToUn-CPDE01,BaEm-SP85,BaGeLe-Book14,CaJuMaToUn-MM01,MaVi-MC00}. We recall it for convenience.
\begin{lemma}\label{LMMA:KL Decay}
Assume that $\pi$ satisfies the LSI~\eqref{EQ:LSI}. Let $\rho_\delta$ be a smooth and strictly positive solution to~\eqref{EQ:FP Onsager Form} with initial condition $\rho_{0}$ such that $\dint_{\Omega}\rho_{0}\,\md\bx=1$ and $\KL(\rho_{0}\|\pi)<+\infty$. Assume further that $\rho_\delta$ satisfies the boundary condition~\eqref{EQ:FP Onsager Form BC} in \hyperlink{case1}{$\CaseI$} and decays sufficiently fast as $|\bx|\to\infty$ in \hyperlink{case2}{$\CaseII$}. 
Then for every $\delta>0$ and $t\ge0$, $\rho_\delta$ satisfies
\begin{equation}\label{EQ:KL Dis}
\frac{\md}{\md t}\KL(\rho_\delta(t)\|\pi)
=
-\mathcal I_{a_\delta}(\rho_\delta\|\pi) \le 0
\end{equation}
leading to
\begin{eqnarray}
\label{EQ:KL Decay}
\KL(\rho_\delta(t)\|\pi)
&\le & e^{-2a_{\min}\lambda_{\mathrm{LSI}}t}\,\KL(\rho_{\delta}(0)\|\pi),\\[1ex]
\label{EQ:Fisher Bound}
\int_0^T \mathcal I_{a_\delta}(\rho_\delta(s)\|\pi)\md s
&\le& \KL(\rho_{\delta}(0)\|\pi),
\qquad \forall T>0.
\end{eqnarray}
In particular, we have that, for any $0<t_1<t_2\le T$,
\begin{equation}\label{EQ:Fisher Bound B}
\int_{t_1}^{t_2}\!\!\int_\Omega a_\delta \left|\nabla\log\!\left(\frac{\rho_\delta}{\pi}\right)\right|^2 \rho_\delta\,\md \bx\,\md t
\le \KL(\rho_\delta(t_1)\|\pi).
\end{equation} 
\end{lemma}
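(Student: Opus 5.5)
The plan is the standard entropy--entropy-production argument, carried out on the smooth positive classical solution from Theorem~\ref{THM:FP Regularized Solution Theory}. I would proceed in three steps.

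\emph{Step 1: the dissipation identity~\eqref{EQ:KL Dis}.} Differentiate $\KL(\rho_\delta(t)\|\pi)=\int_\Omega \rho_\delta\log(\rho_\delta/\pi)\,\md\bx$ in time. The variational derivative is $\log(\rho_\delta/\pi)+1$. Mass conservation~\eqref{EQ:Mass Conservation} kills the constant $1$, so
\[
\frac{\md}{\md t}\KL(\rho_\delta\|\pi)=\int_\Omega \log\frac{\rho_\delta}{\pi}\,\nabla\cdot\Bigl(a_\delta\rho_\delta\nabla\log\frac{\rho_\delta}{\pi}\Bigr)\md\bx .
\]
Integrating by parts produces $-\mathcal I_{a_\delta}(\rho_\delta\|\pi)$ plus a boundary term. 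In \CaseI\ the boundary term vanishes by the no-flux condition~\eqref{EQ:FP Onsager Form BC}. In \CaseII\ I would integrate over balls $B_R$ and let $R\to\infty$.

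\emph{Step 2: justifying the limits in \CaseII.} The boundary term on $\partial B_R$ is $\log(\rho_\delta/\pi)\, J_\delta\cdot\bn$. The flux decays faster than any polynomial, by the Schwartz-type decay recorded in the proof of Theorem~\ref{THM:FP Regularized Solution Theory}. The factor $\log\rho_\delta$ may be large and negative where $\rho_\delta$ is small, so I need a lower bound on $\rho_\delta$, for instance a Gaussian lower bound (Aronson) of the form $\log\rho_\delta\gtrsim -C|\bx|^2$. Since $\nabla F$ is bounded, $|\log\pi|$ grows at most linearly. Together these make the boundary term polynomially controlled, so it tends to zero. Interchanging $\frac{\md}{\md t}$ with the integral is justified by the same decay.

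I expect this to be the main obstacle. If the pointwise argument is too delicate, I would first prove the identity in integrated-in-time form for the truncated entropy $\rho\log(\rho/\pi)\,\phi_R$ with a smooth cutoff $\phi_R$. I would then pass $R\to\infty$ by monotone and dominated convergence, using Lemma~\ref{LMMA:Moment} to control the tails of $\rho_\delta\log\pi$.

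\emph{Step 3: decay and integrated bounds.} By Lemma~\ref{LMMA:Uniform Coeff}(i), $a_\delta\ge a_{\min}$, so $\mathcal I_{a_\delta}(\rho_\delta\|\pi)\ge a_{\min}\int\rho_\delta|\nabla\log(\rho_\delta/\pi)|^2\,\md\bx$. The LSI~\eqref{EQ:LSI} then gives $\mathcal I_{a_\delta}\ge 2a_{\min}\lambda_{\mathrm{LSI}}\KL(\rho_\delta\|\pi)$. Gr\"onwall's inequality yields~\eqref{EQ:KL Decay}. Integrating~\eqref{EQ:KL Dis} over $[t_1,t_2]$ and using $\KL\ge0$ gives~\eqref{EQ:Fisher Bound B}; taking $t_1=0$ and $t_2=T$ gives~\eqref{EQ:Fisher Bound}.
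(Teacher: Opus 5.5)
Your proposal is correct and follows the paper's proof. You differentiate the relative entropy along the Onsager form, use mass conservation, and integrate by parts with the no-flux condition (Case~I) or decay at infinity (Case~II); then you combine $a_\delta\ge a_{\min}$ with the LSI and Gr\"onwall, and integrate the identity in time while dropping the nonnegative terminal entropy.

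The only difference is that you justify why the Case~II boundary term vanishes, via an Aronson-type Gaussian lower bound on $\rho_\delta$ for $t>0$ or a cutoff argument. The paper instead absorbs this into the lemma's hypothesis that $\rho_\delta$ decays sufficiently fast.
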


\begin{proof}
Differentiating $\KL(\rho_\delta\|\pi)$ along~\eqref{EQ:FP Onsager Form}, using mass conservation~\eqref{EQ:Mass Conservation}, and integrating by parts with the boundary condition~\eqref{EQ:FP Onsager Form BC} in \hyperlink{case1}{$\CaseI$} (or the decay condition in \hyperlink{case2}{$\CaseII$}) gives the dissipation identity~\eqref{EQ:KL Dis}. This is the standard entropy-entropy-dissipation computation~\cite{ArMaToUn-CPDE01,BaEm-SP85}. Since $a_\delta\ge a_{\min}$, we have $\cI_{a_\delta}(\rho_\delta\|\pi)\ge a_{\min}\int_\Omega\rho_\delta|\nabla\log(\rho_\delta/\pi)|^2\,\md\bx$, so the LSI~\eqref{EQ:LSI} yields $\frac{\md}{\md t}\KL(\rho_\delta(t)\|\pi)\le-2a_{\min}\lambda_{\mathrm{LSI}}\KL(\rho_\delta(t)\|\pi)$, and~\eqref{EQ:KL Decay} follows by Gr\"onwall's inequality. Integrating the identity~\eqref{EQ:KL Dis} in time and discarding the nonnegative terminal entropy gives~\eqref{EQ:Fisher Bound} and, over $[t_1,t_2]$,~\eqref{EQ:Fisher Bound B}.
\end{proof}
 
The key point of Lemma~\ref{LMMA:KL Decay} is that, thanks to the uniform ellipticity $a_\delta\ge a_{\min}>0$, the dissipation $\cI_{a_\delta}$ controls the (weighted) Fisher information uniformly in $\delta$, so the LSI yields a $\delta$-independent entropy decay rate. The two resulting uniform bounds, on the Fisher information and on its time integral~\eqref{EQ:Fisher Bound B}, furnish, respectively, spatial and (weak) time regularity of $\rho_\delta$. In~\Cref{SUBSEC:Compactness Plus Limit} we turn these into compactness of $\{\rho_\delta\}$, from which we extract a convergent subsequence and pass to the limit $\delta\to0$.

\subsection{Passage to sharp-interface limit}
\label{SUBSEC:Compactness Plus Limit}

The goal of this section is to pass to the limit $\delta \to 0$ in the regularized Fokker--Planck equation and recover a weak solution of the hybrid transmission problem introduced in Section~\ref{SEC:Hybrid-Regularized}. We follow the standard strategy of using the compactness provided by the entropy dissipation estimates to extract a convergent subsequence and then identifying the limit by passing to the weak formulation~\cite{Jungel-Book16}.

First, we observe that the assumption about the initial condition in~\eqref{EQ:IC ASS} immediately yields the following bounds, which we will use throughout this section.
\begin{lemma}\label{LMMA:KL-Fisher Bounds}
Under the assumptions~\eqref{EQ:F Assumptions} and~\eqref{EQ:IC ASS}, we have that
\begin{equation}\label{EQ:KL Bound by IC}
\sup_{t\geq 0}\KL\bigl(\rho_\delta(t)\,\|\,\pi\bigr)
\leq\KL(\rho_0\,\|\,\pi)<+\infty,
\end{equation}
uniformly in $\delta$, and
\begin{equation}
\label{EQ:Fisher Bound B by IC}
\sup_{\delta>0}\int_0^T\!\!\int_\Omega\rho_\delta(t,\bx)
\left|\nabla\log\frac{\rho_\delta(t,\bx)}{\pi(\bx)}\right|^2\,\md\bx\,dt
\leq\frac{1}{a_{\min}}\KL(\rho_0\,\|\,\pi)<+\infty.
\end{equation}   
\end{lemma}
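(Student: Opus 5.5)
The plan is to read both bounds off Lemma~\ref{LMMA:KL Decay}, using only that the initial datum $\rho_\delta(0)=\rho_0$ does not depend on $\delta$. First, by Theorem~\ref{THM:FP Regularized Solution Theory}, for every $\delta>0$ the solution $\rho_\delta$ is smooth and strictly positive for $t>0$ and conserves mass. It satisfies the no-flux condition~\eqref{EQ:FP Onsager Form BC} in \hyperlink{case1}{$\CaseI$} and has Schwartz decay in \hyperlink{case2}{$\CaseII$}. So the hypotheses of Lemma~\ref{LMMA:KL Decay} hold, and it gives the dissipation identity~\eqref{EQ:KL Dis}.

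For~\eqref{EQ:KL Bound by IC}, note that~\eqref{EQ:KL Dis} says $t\mapsto\KL(\rho_\delta(t)\|\pi)$ is nonincreasing. Hence $\KL(\rho_\delta(t)\|\pi)\le\KL(\rho_0\|\pi)$ for all $t\ge0$, and the right-hand side is finite by~\eqref{EQ:IC ASS}. Alternatively one can use~\eqref{EQ:KL Decay} directly, since $e^{-2a_{\min}\lambda_{\mathrm{LSI}}t}\le1$. Either way the right-hand side is independent of $\delta$, so the bound is uniform in $\delta$.

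For~\eqref{EQ:Fisher Bound B by IC}, use the lower bound $a_\delta\ge a_{\min}$ from Lemma~\ref{LMMA:Uniform Coeff}(i), where $a_{\min}$ does not depend on $\delta$. This gives
\[
a_{\min}\int_0^T\!\!\int_\Omega\rho_\delta\left|\nabla\log\frac{\rho_\delta}{\pi}\right|^2\md\bx\,dt\le\int_0^T\mathcal I_{a_\delta}(\rho_\delta(s)\|\pi)\,\md s\le\KL(\rho_0\|\pi),
\]
where the last step is~\eqref{EQ:Fisher Bound}. Dividing by $a_{\min}$ and taking the supremum over $\delta>0$ gives the claim.

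The only point that needs care is at $t=0$. Positivity from Theorem~\ref{THM:FP Regularized Solution Theory} is stated on $(0,T]$, so the entropy identity has to be integrated over $[t_1,T]$ and then $t_1\to0^+$ taken. One then needs $\KL(\rho_\delta(t_1)\|\pi)\to\KL(\rho_0\|\pi)$. This follows from smoothness of $\rho_\delta$ up to $t=0$ together with $\rho_0>0$. In \hyperlink{case2}{$\CaseII$} it additionally uses dominated convergence, with Schwartz decay and Lemma~\ref{LMMA:Moment} controlling the term $\int\rho_\delta F/\eps$ at infinity. Monotone convergence in $t_1$ then handles the time integral of the nonnegative Fisher information.
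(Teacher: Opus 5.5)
Your proposal is correct and matches the paper's proof. The paper also gets the entropy bound from the monotonicity in the dissipation identity~\eqref{EQ:KL Dis}, and the Fisher bound from the time-integrated dissipation~\eqref{EQ:Fisher Bound B} combined with $a_\delta\ge a_{\min}$ from Lemma~\ref{LMMA:Uniform Coeff}. Your extra care at $t=0$ (integrating over $[t_1,T]$ and letting $t_1\to0^+$) is a harmless refinement. One small point there: in Case~II you don't need Lemma~\ref{LMMA:Moment}, which itself presupposes a KL bound. The linear growth of $F$, which follows from $\nabla F\in L^\infty$, together with Schwartz decay already gives the domination.
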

\begin{proof}
The bound~\eqref{EQ:KL Bound by IC} follows directly from~\eqref{EQ:KL Dis} of Lemma~\ref{LMMA:KL Decay}. We then combine ~\eqref{EQ:Fisher Bound B} with the fact that $a_\delta\ge a_{\min}$ (given in Lemma~\ref{LMMA:Uniform Coeff}) to conclude~\eqref{EQ:Fisher Bound B by IC}.
\end{proof}

\subsubsection{Compactness of solution sequences}

The uniform entropy dissipation we see gives two key bounds on the interval $[0,T]$. The first is that $\sqrt{\rho_\delta}$ is bounded in $L^2((0,T);H^1_{\mathrm{loc}})$, and the other is that $\partial_t\rho_\delta$ is bounded in $L^2((0,T);H^{-1}_{\mathrm{loc}})$. We establish these in the following two lemmas. The strong local compactness of $\rho_\delta$ for positive times will allow us to take $\delta\to 0$ using the lower-semicontinuity of $\KL$ divergence.

We record a $\delta$-uniform local sup bound on $\rho_\delta$, used repeatedly below. The point is that this bound depends only on the ellipticity ratio and the drift bound, hence stays uniform as $\delta\to0$, even though the classical (Schauder) regularity constants degenerate since $\nabla^2 a_\delta=\cO(\delta^{-2})$.
\begin{lemma}\label{LMMA:Linfty}
Assume $F$ satisfies~\hyperlink{F-a}{\bf (F-a)} and $\rho_0$ satisfies~\eqref{EQ:IC ASS}. Then, for every compact $K\Subset\Omega$ (with $K=\Omega$ in \hyperlink{case1}{$\CaseI$}) and every $T>0$, there is a constant $C^{(1)}_{T,K}>0$, independent of $\delta$, such that
\begin{equation}\label{EQ:Linfty}
\|\rho_\delta\|_{L^\infty((0,T)\times K)}\le C^{(1)}_{T,K}\,.
\end{equation}
\end{lemma}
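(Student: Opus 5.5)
The plan is to work with the ratio $u_\delta:=\rho_\delta/\pi$ rather than with $\rho_\delta$ itself. By the Onsager form~\eqref{EQ:FP Onsager Form}, $u_\delta$ satisfies
\[
\pi\,\partial_t u_\delta=\nabla\cdot\bigl(a_\delta\pi\,\nabla u_\delta\bigr)\qquad\text{in }\Omega,
\]
with the homogeneous conormal condition $a_\delta\pi\,\nabla u_\delta\cdot\bn=0$ on $\partial\Omega$ in \hyperlink{case1}{$\CaseI$}. This is a linear parabolic equation in divergence form. It has no drift, and it involves only the coefficients $\pi$ and $a_\delta\pi$, with no derivatives of $a_\delta$. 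By Lemma~\ref{LMMA:Uniform Coeff}, on $K$ (or on $\Omega$ in \hyperlink{case1}{$\CaseI$}) both $\pi$ and $a_\delta\pi$ are bounded above and below by positive constants independent of $\delta$. Hence the structural constants depend only on $a_{\min}$, $\Lambda_K$, and $\sup_K|F|$. The $\cO(\delta^{-2})$ size of $\nabla^2a_\delta$ never enters, which is exactly the point emphasized before the lemma.

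\textbf{Case I.} Here I would use the parabolic maximum principle for $u_\delta$. The equation is linear with a Neumann (conformal) boundary condition, and $u_\delta$ is smooth by Theorem~\ref{THM:FP Regularized Solution Theory}. Constants are solutions, so comparison gives
\[
\sup_{t,\bx}u_\delta\le \sup_\Omega \rho_0/\pi<\infty,
\]
since $\rho_0\in C^\infty(\overline\Omega)$ and $\pi$ is bounded below on the bounded domain. Multiplying back by $\pi\le\sup\pi$ gives~\eqref{EQ:Linfty} with a $\delta$-independent constant.

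\textbf{Case II.} The same maximum principle argument works on $\bbR^d$ provided $\sup\rho_0/\pi<\infty$. Since $F$ has bounded gradient, $\pi\gtrsim e^{-C|\bx|/\eps}$, and a Schwartz $\rho_0$ need not be dominated by this, so I would not rely on that route. Instead I would use a local Moser iteration (De Giorgi--Nash--Moser) for the divergence-form equation $\pi\partial_tu=\nabla\cdot(A\nabla u)$ on parabolic cylinders covering $(0,T)\times K$. This gives
\[
\sup_{Q_r}u_\delta\le C\Bigl(\textstyle\fint_{Q_{2r}}u_\delta\Bigr)
\]
for cylinders away from $t=0$, with $C$ depending only on the ellipticity ratio. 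The $L^1$ input is uniform, because $\int\rho_\delta=1$ and $\pi$ is bounded below on a neighborhood of $K$. Near $t=0$ I would instead compare with the initial datum locally. One option is to run the Moser iteration up to the initial time using $\|\rho_0/\pi\|_{L^\infty(K')}$, which is finite on a compact neighborhood $K'$. Alternatively, I would split $\rho_0$ into a compactly supported part, treated by the maximum principle on the ratio, and a tail, treated by the interior estimate. The linearity of the equation makes this splitting legitimate.

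The main obstacle is making the near-$t=0$ estimate and the unbounded-domain localization clean. In particular, the local Moser bound up to the initial time has to be stated with constants that depend only on the ellipticity bounds on a neighborhood of $K$; the Gaussian/Aronson bounds cited earlier have $\delta$-dependent drift constants and cannot be reused. I would handle this by working exclusively with the drift-free ratio equation, so that Aronson's estimates for divergence-form operators without lower-order terms apply with constants depending only on the ellipticity ratio.
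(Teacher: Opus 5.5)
Your proposal is correct, and it differs from the paper's argument in a useful way.

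\textbf{The paper's route.} The paper works with $\rho_\delta$ itself, in the divergence form $\partial_t\rho_\delta=\nabla\cdot\bigl(a_\delta\nabla\rho_\delta+\tfrac{a_\delta}{\varepsilon}\rho_\delta\nabla F\bigr)$. It applies De~Giorgi--Nash--Moser local boundedness (or Aronson's bound) in both cases. Small times are handled by a maximum principle on $\rho_\delta$ together with $\|\rho_\delta(t)\|_{L^1}=1$.

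\textbf{Your route.} You pass to $u_\delta=\rho_\delta/\pi$. This solves the generator equation $\partial_t u_\delta=a_\delta\Delta u_\delta+b_\delta\cdot\nabla u_\delta$, with homogeneous Neumann data and no zeroth-order term.

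\textbf{Case~I.} Here the ratio makes the lemma elementary. With $k=\sup_\Omega\rho_0/\pi$ one has $\frac{d}{dt}\int_\Omega\pi(u_\delta-k)_+^2=-2\int_\Omega a_\delta\pi|\nabla(u_\delta-k)_+|^2\le 0$. Hence $\rho_\delta\le e^{(F_{\max}-F_{\min})/\varepsilon}\|\rho_0\|_{L^\infty}$ on $[0,T]\times\Omega$ for every $T$. This bound is explicit, independent of $T$, and uses no $\delta$-uniform regularity theory. It also avoids what a maximum principle on $\rho_\delta$ itself would face:
\begin{itemize}
\item the zeroth-order term $\rho_\delta\nabla\cdot(\tfrac{a_\delta}{\varepsilon}\nabla F)$, which requires an upper bound on $\Delta F$ not supplied by the standing assumptions;
\item the Robin-type no-flux condition $\partial_n\rho_\delta+\tfrac{1}{\varepsilon}\rho_\delta\,\partial_n F=0$.
\end{itemize}

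\textbf{Case~II.} You rightly note that $\rho_0/\pi$ need not be bounded, since $F$ grows at most linearly. You then fall back on local Moser iteration, as the paper does. You make the small-time step explicit through the near-base estimate for $(u_\delta-k)_+$ with $k\ge\sup_{K'}\rho_0/\pi$, which is the correct localized version. If you use your splitting alternative instead, the tail piece also needs this near-base estimate (with vanishing local initial data), not merely the interior estimate away from $t=0$.

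\textbf{A correction to your motivation.} The paper's divergence form contains no derivative of $a_\delta$. Its lower-order coefficient $\tfrac{a_\delta}{\varepsilon}\nabla F$ is bounded uniformly in $\delta$ by the coefficient bounds of the paper's first lemma. So De~Giorgi--Nash--Moser and Aronson's Gaussian bounds do apply to $\rho_\delta$ with $\delta$-independent constants, and your claim that the Aronson constants are $\delta$-dependent is not accurate. The ratio substitution is a convenience rather than a necessity: there are no lower-order terms, $(u-k)_+$ is immediately a subsolution, and constants are solutions.

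\textbf{Trade-off.} The paper's route gives a single argument for both cases. Wherever a global Gaussian upper bound is available, it also gives a bound in terms of $\|\rho_0\|_{L^\infty}$ that does not care whether $\rho_0/\pi$ is bounded. Your route gives a fully elementary, explicit, time-uniform bound in Case~I, and a cleaner treatment of the initial layer in Case~II.
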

\begin{proof}
Using $b_\delta=\nabla a_\delta-\dfrac{a_\delta}{\eps}\nabla F$, equation~\eqref{EQ:FP Regularized} can be written in divergence form as
\[
\partial_t\rho_\delta=\nabla\cdot\!\Big(a_\delta\nabla\rho_\delta+\tfrac{a_\delta}{\eps}\rho_\delta\nabla F\Big).
\]
By Lemma~\ref{LMMA:Uniform Coeff}, the principal coefficient satisfies $a_{\min}\le a_\delta\le\Lambda_K$ on $K$ and the first-order coefficient obeys $|\frac{a_\delta}{\eps}\nabla F|\le\frac{\Lambda_K}{\eps}\|\nabla F\|_{L^\infty(K)}$, both uniformly in $\delta$. The local boundedness estimate of De~Giorgi--Nash--Moser for divergence-form parabolic equations~\cite[Ch.~III]{LaSoUr-Book68} (equivalently, Aronson's Gaussian upper bound for the fundamental solution~\cite{Ar-BAMS-67}) then yields a sup bound on $\rho_\delta$ over parabolic cylinders in $(0,T]\times K$ whose constant depends only on $d$, the ellipticity ratio $\Lambda_K/a_{\min}$, the first-order bound, $K$, and $T$, hence is independent of $\delta$. Near $t=0$ the bound is controlled by $\|\rho_0\|_{L^\infty(\Omega)}$ through the parabolic maximum principle, again with $\delta$-independent constants. Together with $\|\rho_\delta(t)\|_{L^1}=1$ given in Theorem~\ref{THM:FP Regularized Solution Theory}, this gives~\eqref{EQ:Linfty} on all of $[0,T]\times K$.
\end{proof}

We first establish the following $H^1$ control of $\sqrt{\rho_\delta}$ for $t>0$.
\begin{lemma}\label{LMMA:Grad Bound}
Assume that $F$ satisfies assumption~\hyperlink{F-a}{\bf (F-a)} and $\rho_{0}$ satisfies ~\eqref{EQ:IC ASS}.
Then, for every compact $K\Subset\Omega$
(with $K=\Omega$ in \hyperlink{case1}{$\CaseI$}), there exists a constant $C_{T,K}>0$, independent of $\delta$, such that
\begin{equation}
\sup_{\delta>0}\int_0^T\!\!\int_K
\bigl|\nabla\sqrt{\rho_\delta(t,\bx)}\bigr|^2\,\md\bx\,dt
\leq C_{T,K}.
\end{equation}
In particular, $\{\sqrt{\rho_\delta}\}_{\delta>0}$ is bounded in
$L^2((0,T);H^1(K))$ uniformly in $\delta$.
\end{lemma}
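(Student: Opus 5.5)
The plan is to read off the $H^1$ control of $\sqrt{\rho_\delta}$ from the uniform Fisher information bound~\eqref{EQ:Fisher Bound B by IC} of Lemma~\ref{LMMA:KL-Fisher Bounds}. Since $\pi$ does not depend on $\delta$, no regularity of the coefficients $a_\delta$ is needed beyond $a_\delta\ge a_{\min}$, and that has already been used in~\eqref{EQ:Fisher Bound B by IC}. The starting point is the pointwise identity, valid because $\rho_\delta>0$ is smooth by Theorem~\ref{THM:FP Regularized Solution Theory}:
\[
\nabla\log\frac{\rho_\delta}{\pi}=\frac{\nabla\rho_\delta}{\rho_\delta}+\frac{\nabla F}{\eps},
\qquad\text{so}\qquad
2\nabla\sqrt{\rho_\delta}=\sqrt{\rho_\delta}\,\nabla\log\frac{\rho_\delta}{\pi}-\frac{\sqrt{\rho_\delta}}{\eps}\nabla F .
\]
Applying $(x+y)^2\le 2x^2+2y^2$ then gives
\[
4\bigl|\nabla\sqrt{\rho_\delta}\bigr|^2\le 2\rho_\delta\Bigl|\nabla\log\frac{\rho_\delta}{\pi}\Bigr|^2+\frac{2}{\eps^2}\|\nabla F\|_{L^\infty(\Omega)}^2\,\rho_\delta .
\]

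Next I integrate over $(0,T)\times K$ and bound the two terms separately. The first term is controlled by $\frac{2}{a_{\min}}\KL(\rho_0\|\pi)$ through~\eqref{EQ:Fisher Bound B by IC}. The second term uses \hyperlink{F-a}{\bf (F-a)}, which gives $\nabla F\in L^\infty(\Omega)$, together with mass conservation~\eqref{EQ:Mass Conservation}, so that $\int_K\rho_\delta\le1$; this term is therefore at most $2T\|\nabla F\|_\infty^2/\eps^2$. Both bounds are independent of $\delta$, and neither needs the local sup bound of Lemma~\ref{LMMA:Linfty}. In fact the estimate holds on all of $\Omega$ in both cases, so the restriction to compact $K$ is not even needed for the gradient.

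For the full $L^2((0,T);H^1(K))$ claim I also need an $L^2$ bound on $\sqrt{\rho_\delta}$ itself. This is immediate: $\int_K|\sqrt{\rho_\delta}|^2=\int_K\rho_\delta\le1$ for every $t$, so the $L^2$ part of the norm is at most $T$.

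The only point needing care is justifying that the Fisher bound~\eqref{EQ:Fisher Bound B by IC} applies with the initial entropy. This reduces to the integration by parts at infinity in \hyperlink{case2}{$\CaseII$}, which Lemma~\ref{LMMA:KL Decay} already handles using Schwartz decay. I therefore expect no real obstacle: the whole argument is the pointwise identity above combined with the bounded-gradient assumption on $F$. Collecting terms, I would set $C_{T,K}:=\frac{1}{2a_{\min}}\KL(\rho_0\|\pi)+\frac{T}{2\eps^2}\|\nabla F\|_\infty^2$.
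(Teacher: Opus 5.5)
Your proposal is correct and is essentially the paper's proof. Both arguments split $\nabla\log\rho_\delta=\nabla\log(\rho_\delta/\pi)+\nabla\log\pi$ pointwise with $(u+v)^2\le 2u^2+2v^2$, bound the first term by the uniform Fisher bound \eqref{EQ:Fisher Bound B by IC}, and bound the second using $\nabla F\in L^\infty$ together with mass conservation. Your extra remarks, the explicit $L^2$ bound on $\sqrt{\rho_\delta}$ and the observation that the estimate holds on all of $\Omega$, are correct minor refinements.
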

\begin{proof}
By the assumptions, we have that $\rho_\delta$ is smooth and strictly positive. Therefore, we have that $\sqrt{\rho_\delta}$ is smooth and the pointwise
identity
\[
\bigl|\nabla\sqrt{\rho_\delta}\bigr|^2
=\frac{|\nabla\rho_\delta|^2}{4\rho_\delta}
=\frac{\rho_\delta|\nabla\log\rho_\delta|^2}{4}
\]
holds. Using 
\[
\nabla\log\!\left(\frac{\rho_\delta}{\pi}\right)=\nabla\log\rho_\delta-\nabla\log\pi\,.
\]
and the inequality $u^2\le 2(u-v)^2 + 2v^2$, we conclude that
\[
\left|\nabla\log\rho_\delta\right|^2
\le 2\left|\nabla\log\!\left(\frac{\rho_\delta}{\pi}\right)\right|^2 + 2|\nabla\log\pi|^2.
\]
Therefore, we have that
\[
\int_0^T\!\!\int_K\bigl|\nabla\sqrt{\rho_\delta}\bigr|^2\,\md\bx\,dt
\leq
\frac{1}{2}\int_0^T\!\!\int_K \rho_\delta
\left|\nabla\log\frac{\rho_\delta}{\pi}\right|^2\,\md\bx\,dt
+\frac{1}{2}\int_0^T\!\!\int_K\rho_\delta|\nabla\log\pi|^2\,\md\bx\,dt.
\]
By~\eqref{EQ:Fisher Bound B by IC}, the first term is uniformly bounded. By~\hyperlink{F-a}{\bf (F-a)}, $\nabla\log\pi=-\varepsilon^{-1}\nabla F\in L^\infty(K)$, and by~\eqref{EQ:Mass Conservation}, $\int_K\rho_\delta(t)\leq 1$, so the second term is bounded by $\frac{T}{2}\|\nabla\log\pi\|_{L^\infty(K)}^2$.
\end{proof}

We now sharpen Lemma~\ref{LMMA:Grad Bound} by establishing a uniform $H^1$ bound on the product $a_\delta \rho_\delta$. This bound will be the key ingredient in deducing the continuity of $\rho$ across the interface $\Gamma$ in the sharp-interface limit; see Theorem~\ref{THM:Continuity of Rho}.

\begin{lemma}\label{LMMA:H1-aRho}
Assume that $F$ satisfies assumption~\hyperlink{F-a}{\bf (F-a)} and $\rho_0$
satisfies~\eqref{EQ:IC ASS}. Then, for every compact
$K \Subset \Omega$ (with $K = \Omega$ in \hyperlink{case1}{$\CaseI$}), there
exists a constant $\widetilde{C}_{T,K} > 0$, independent of $\delta$,
such that
\begin{equation}
\label{EQ:H1-aRho}
\sup_{\delta > 0}\int_0^T \int_K
   \bigl|\nabla(a_\delta \rho_\delta)\bigr|^2 \, \md\bx \, dt
\;\leq\; \widetilde{C}_{T,K}.
\end{equation}
In particular, $\{a_\delta \rho_\delta\}_{\delta > 0}$ is bounded in
$L^2\bigl((0,T); H^1(K)\bigr)$ uniformly in $\delta$.
\end{lemma}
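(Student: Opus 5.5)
The plan is to use the Onsager identity $\nabla(a_\delta\rho_\delta) - b_\delta\rho_\delta = a_\delta\rho_\delta\nabla\log(\rho_\delta/\pi)$, which is exactly how the regularized drift was constructed in~\eqref{EQ:Coefficients Regularized}. Rearranging, we have
\[
\nabla(a_\delta\rho_\delta) = a_\delta\rho_\delta\,\nabla\log\frac{\rho_\delta}{\pi} + b_\delta\rho_\delta .
\]
Squaring and using $(u+v)^2\le 2u^2+2v^2$ pointwise on $K$ then gives
\[
\bigl|\nabla(a_\delta\rho_\delta)\bigr|^2 \le 2\,a_\delta^2\rho_\delta^2\left|\nabla\log\frac{\rho_\delta}{\pi}\right|^2 + 2\,|b_\delta|^2\rho_\delta^2 .
\]
I prefer this decomposition to expanding $\nabla(a_\delta\rho_\delta)=\rho_\delta\nabla a_\delta + a_\delta\nabla\rho_\delta$. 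The reason is that $\nabla a_\delta$ contains the term $\eps(E-1)\nabla\chi_\delta$, which is only $\cO(1)$ thanks to the cancellation recorded in Lemma~\ref{LMMA:Uniform Coeff}, whereas $b_\delta$ already packages that cancellation into a $\delta$-uniform bound.

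For the first term, I will extract one factor of $a_\delta$ and one factor of $\rho_\delta$ in sup norm on $K$. The bound $a_\delta\le\Lambda_K$ comes from Lemma~\ref{LMMA:Uniform Coeff}(ii)/(iii), and $\rho_\delta\le C^{(1)}_{T,K}$ comes from Lemma~\ref{LMMA:Linfty}. This yields
\[
\int_0^T\!\!\int_K 2a_\delta^2\rho_\delta^2\left|\nabla\log\frac{\rho_\delta}{\pi}\right|^2 \le 2\Lambda_K C^{(1)}_{T,K}\int_0^T \mathcal I_{a_\delta}(\rho_\delta\|\pi)\,dt \le 2\Lambda_K C^{(1)}_{T,K}\,\KL(\rho_0\|\pi),
\]
where the last step is~\eqref{EQ:Fisher Bound}, using that $\rho_\delta(0)=\rho_0$. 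The same bound follows from~\eqref{EQ:Fisher Bound B by IC} after bounding $a_\delta^2\le\Lambda_K^2$.

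For the second term, I use $|b_\delta|\le M_K$ from Lemma~\ref{LMMA:Uniform Coeff}, together with $\rho_\delta^2\le C^{(1)}_{T,K}\rho_\delta$ and mass conservation~\eqref{EQ:Mass Conservation}. This bounds it by $2M_K^2 C^{(1)}_{T,K}\,T$. Adding the two pieces gives $\widetilde C_{T,K}$ independent of $\delta$.

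For the ``in particular'' statement, I also need the $L^2$ bound on $a_\delta\rho_\delta$ itself. This follows from $|a_\delta\rho_\delta|^2\le\Lambda_K^2 C^{(1)}_{T,K}\rho_\delta$ and mass conservation, giving $\int_0^T\!\int_K|a_\delta\rho_\delta|^2\le\Lambda_K^2C^{(1)}_{T,K}T$.

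The main obstacle is that the argument relies entirely on the $\delta$-uniform $L^\infty$ bound of Lemma~\ref{LMMA:Linfty}. Without it, the entropy dissipation controls only $\rho|\nabla\log(\rho/\pi)|^2$, not $\rho^2|\nabla\log(\rho/\pi)|^2$. Similarly, the $H^1$ bound on $\sqrt{\rho_\delta}$ from Lemma~\ref{LMMA:Grad Bound} would give $\nabla\rho_\delta=2\sqrt{\rho_\delta}\nabla\sqrt{\rho_\delta}$ only in $L^1$. Since Lemma~\ref{LMMA:Linfty} is available as stated, with constants depending only on ellipticity and the drift bound, I take it as given.

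A secondary point is Case~II: there $K$ is compact in $\bbR^d$, so the local bounds $\Lambda_K,M_K$ suffice, and no integration by parts is needed, since the estimate is purely pointwise followed by the global dissipation bound.
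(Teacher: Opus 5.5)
Your proof is correct and uses the same three ingredients as the paper's proof: the uniform coefficient bounds of Lemma~\ref{LMMA:Uniform Coeff}, the $\delta$-uniform sup bound of Lemma~\ref{LMMA:Linfty}, and the time-integrated entropy dissipation. The paper expands $\nabla(a_\delta\rho_\delta)=a_\delta\nabla\rho_\delta+\rho_\delta\nabla a_\delta$ into the three pieces $a_\delta\rho_\delta\nabla\log(\rho_\delta/\pi)$, $a_\delta\rho_\delta\nabla\log\pi$ and $\rho_\delta\nabla a_\delta$, and bounds $\eps(E-1)\nabla\chi_\delta$ directly; your flux identity merges the last two into $b_\delta\rho_\delta$, so the same cancellation enters through $\|b_\delta\|_{L^\infty(K)}\le M_K$. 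This regrouping gives slightly cleaner constants (no $\Lambda_K/a_{\min}$ or $|K|$ factors), and you also supply the $L^2$ bound on $a_\delta\rho_\delta$ itself, which the ``in particular'' claim needs and the paper leaves implicit.
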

\begin{proof}
Fix a compact set $K \Subset \Omega$ (with $K = \Omega$ in~\hyperlink{case1}{$\CaseI$}). Since $\rho_\delta$ is smooth and strictly positive, we may write
\begin{equation}\label{EQ:Grad-aRho-Split}
\nabla(a_\delta \rho_\delta) \;=\; a_\delta \nabla \rho_\delta
   + \rho_\delta \nabla a_\delta,
\end{equation}
and we estimate the two terms separately.

To estimate $a_\delta \nabla \rho_\delta$, we use $\nabla \log(\rho_\delta/\pi) = \nabla \rho_\delta / \rho_\delta
- \nabla \log \pi$ to get
\[
\nabla \rho_\delta \;=\; \rho_\delta \nabla \log\!\frac{\rho_\delta}{\pi}
   + \rho_\delta \nabla \log \pi.
\]
Therefore, by the inequality $|u+v|^2 \leq 2|u|^2 + 2|v|^2$, we have
\begin{equation}
\label{EQ:Grad-Rho-Split}
|a_\delta \nabla \rho_\delta|^2
\;\leq\; 2 a_\delta^2 \rho_\delta^2
   \Bigl|\nabla \log\tfrac{\rho_\delta}{\pi}\Bigr|^2
   + 2 a_\delta^2 \rho_\delta^2 |\nabla \log \pi|^2.
\end{equation}
By Lemma~\ref{LMMA:Uniform Coeff}, $a_\delta \leq \Lambda_K$ uniformly
on $K$ (with $\Lambda$ instead of $\Lambda_K$ in~\hyperlink{case1}{$\CaseI$}). By \BLUE{Lemma~\ref{LMMA:Linfty}}, there exists a constant $C_{T,K}^{(1)}$, independent of $\delta$, such that $\|\rho_\delta\|_{L^\infty((0,T) \times K)} \leq C_{T,K}^{(1)}$. Therefore, we have
\begin{equation}\label{EQ:Grad-Rho-Split-2}
|a_\delta \nabla \rho_\delta|^2
\;\leq\; 2 \Lambda_K^2 C_{T,K}^{(1)} \Bigl|\nabla \log\tfrac{\rho_\delta}{\pi}\Bigr|^2 \rho_\delta
   + 2 \Lambda_K^2 C_{T,K}^{(1)} |\nabla \log \pi|^2\rho_\delta.
\end{equation}
Integrating~\eqref{EQ:Grad-Rho-Split-2} on $(0,T) \times K$ and using the uniform Fisher-information control~\eqref{EQ:Fisher Bound B by IC}, we obtain
\begin{equation}
\label{EQ:Bound T1}
\int_0^T\!\!\int_K |a_\delta \nabla \rho_\delta|^2 \, \md\bx \, dt
\;\leq\; \frac{2 \Lambda_K^2 C_{T,K}^{(1)}}{a_{\min}} \cdot \mathrm{KL}(\rho_0\|\pi)
   + 2 \Lambda_K^2 T C_{T,K}^{(1)} \|\nabla \log \pi\|_{L^\infty(K)}^2,
\end{equation}
where we have also used the fact that $\rho_\delta \in L^\infty\bigl((0,T); L^1(\Omega)\bigr)$
with $\|\rho_\delta(t)\|_{L^1} = 1$ for all $t \in [0,T]$ given by Theorem~\ref{THM:FP Regularized Solution Theory} and the assumption~\eqref{EQ:IC ASS}
on $\rho_0$.

To estimate $\rho_\delta \nabla a_\delta$, we first recall from~\eqref{EQ:Coefficients Regularized} that
\[
	a_\delta(\bx) \;=\; \varepsilon + \varepsilon \chi_\delta(\bx)\bigl(E(\bx) - 1\bigr),
\]
where $E(\bx) = e^{(F(\bx) - F_0)/\varepsilon}$. Therefore
\begin{equation}
\label{eq:grad-aDelta}
\nabla a_\delta \;=\; \varepsilon (E - 1) \nabla \chi_\delta
   + \varepsilon \chi_\delta \nabla E.
\end{equation}
For the first term in~\eqref{eq:grad-aDelta}, recall from the proof of Lemma~\ref{LMMA:Uniform Coeff}
that 
$\bigl|\varepsilon (E - 1) \nabla \chi_\delta\bigr|
\;\leq\; M_2$ for some $M_2>0$ on $U_\delta = \{\bx : |\mathrm{dist}(\bx, \Gamma)| < \delta\}$, with the bound being zero outside $U_\delta$.

For the second term in~\eqref{eq:grad-aDelta}, $\chi_\delta$ is uniformly bounded by $1$
and $|\nabla E| = (1/\varepsilon)|E| |\nabla F|$ is uniformly bounded
on $K$ by Lemma~\ref{LMMA:Uniform Coeff} and ~\hyperlink{F-a}{\bf (F-a)}. Therefore, we have
$\bigl|\varepsilon \chi_\delta \nabla E\bigr|
\;\leq\; \|E\|_{L^\infty(K)} \|\nabla F\|_{L^\infty(K)}
\;=:\; M_3$. We, therefore, have
\[
|\nabla a_\delta(\bx)| \;\leq\; M_2 \mathbf{1}_{U_\delta}(\bx) + M_3,
\qquad \bx \in K\,.
\]
This leads to
\begin{multline}
\label{EQ:Bound T2}
\int_0^T\!\!\int_K \rho_\delta^2 |\nabla a_\delta|^2 \, \md\bx \, dt
\;\leq\; 2 M_2^2 \int_0^T\!\!\int_{K \cap U_\delta} \rho_\delta^2 \, \md\bx \, dt
   + 2 M_3^2 \int_0^T\!\!\int_K \rho_\delta^2 \, \md\bx \, dt
\\
\;\leq\; 2 \bigl(M_2^2 + M_3^2\bigr) T
   \|\rho_\delta\|_{L^\infty((0,T) \times K)}^2 |K|
\;\leq\; 2 \bigl(M_2^2 + M_3^2\bigr) T (C_{T,K}^{(1)})^2 |K|
\;=:\; C_{T,K}^{(2)},
\end{multline}
where in the second inequality we used the $L^\infty$ bound on
$\rho_\delta$ and the trivial bound $|U_\delta \cap K| \leq |K|$.

Inserting~\eqref{EQ:Bound T1} and~\eqref{EQ:Bound T2} into~\eqref{EQ:Grad-aRho-Split} and using $(u+v)^2 \leq 2u^2 + 2v^2$, we obtain~\eqref{EQ:H1-aRho} with
\[
\widetilde{C}_{T,K}
\;=\; 2 \Bigl[
   \frac{2 \Lambda_K^2 C_{T,K}^{(1)}}{a_{\min}} \mathrm{KL}(\rho_0\|\pi)
   + 2 \Lambda_K^2 T C_{T,K}^{(1)} \|\nabla \log \pi\|_{L^\infty(K)}^2
   + C_{T,K}^{(2)}
\Bigr].
\]
The proof is complete since all constants on the right are independent of $\delta$.
\end{proof}

The next step is to show the following time-derivative bound in $H^{-1}_{\mathrm{loc}}$.
\begin{lemma}
\label{LMMA:Time-Deri Bound}
Assume that $F$ satisfies assumption~\hyperlink{F-a}{\bf (F-a)} and $\rho_{0}$ satisfies ~\eqref{EQ:IC ASS}. Then the following holds.
\begin{itemize}
\item In \hyperlink{case1}{$\CaseI$}, $\{\partial_t\rho_\delta\}_{\delta>0}$ is bounded in
$L^2((0,T);H^{-1}(\Omega))$ uniformly in $\delta$.
\item In \hyperlink{case2}{$\CaseII$}, for every $R>0$,
$\{\partial_t\rho_\delta\}_{\delta>0}$ is bounded in
$L^2((0,T);H^{-1}(B_R))$ uniformly in $\delta$.
\end{itemize}
\end{lemma}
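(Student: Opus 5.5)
The plan is to estimate $\partial_t\rho_\delta$ by duality, using the Onsager form~\eqref{EQ:FP Onsager Form}. The key observation is that the flux can be written as a product,
\[
a_\delta\rho_\delta\nabla\log(\rho_\delta/\pi) = a_\delta\sqrt{\rho_\delta}\cdot\bigl(\sqrt{\rho_\delta}\,\nabla\log(\rho_\delta/\pi)\bigr).
\]
The second factor is controlled in $L^2((0,T)\times\Omega)$ by the uniform Fisher bound~\eqref{EQ:Fisher Bound B by IC}. The first factor is bounded pointwise on compacts, uniformly in $\delta$, by Lemma~\ref{LMMA:Uniform Coeff} and Lemma~\ref{LMMA:Linfty}. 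This avoids any use of derivatives of $a_\delta$, which would be $\cO(\delta^{-1})$ in the layer $U_\delta$.

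Concretely, in \hyperlink{case2}{$\CaseII$}, fix $R>0$ and a test function $\varphi\in H^1_0(B_R)$. Multiplying~\eqref{EQ:FP Onsager Form} by $\varphi$ and integrating by parts gives
\[
\langle\partial_t\rho_\delta(t),\varphi\rangle=-\int_{B_R}a_\delta\rho_\delta\nabla\log\tfrac{\rho_\delta}{\pi}\cdot\nabla\varphi\,\md\bx .
\]
The boundary term vanishes since $\varphi$ has zero trace; this is justified first for smooth compactly supported $\varphi$ and then by density. By Cauchy--Schwarz,
\[
|\langle\partial_t\rho_\delta,\varphi\rangle|\le \Lambda_{\overline{B_R}}\bigl(C^{(1)}_{T,\overline{B_R}}\bigr)^{1/2}\Bigl(\int_{B_R}\rho_\delta\bigl|\nabla\log\tfrac{\rho_\delta}{\pi}\bigr|^2\md\bx\Bigr)^{1/2}\|\nabla\varphi\|_{L^2(B_R)} .
\]
Taking the supremum over $\|\varphi\|_{H^1_0}\le1$, squaring, and integrating in time, we then apply~\eqref{EQ:Fisher Bound B by IC}. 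This gives
\[
\|\partial_t\rho_\delta\|^2_{L^2(0,T;H^{-1}(B_R))}\le \Lambda_{\overline{B_R}}^2C^{(1)}_{T,\overline{B_R}}\,\KL(\rho_0\|\pi)/a_{\min},
\]
which is independent of $\delta$.

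In \hyperlink{case1}{$\CaseI$} the same computation works on all of $\Omega$, using the global bounds $a_\delta\le\Lambda$ and $\|\rho_\delta\|_{L^\infty((0,T)\times\Omega)}\le C^{(1)}_{T,\Omega}$. I will interpret $H^{-1}(\Omega)$ as the dual of $H^1(\Omega)$. With this reading, test functions need not vanish on $\partial\Omega$, and the boundary term drops out instead by the no-flux condition~\eqref{EQ:FP Onsager Form BC}. If $H^{-1}$ is read as the dual of $H^1_0$, the bound follows a fortiori.

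The main obstacle is the uniform $L^\infty$ bound on $\rho_\delta$. Without it, one only has $\sqrt{\rho_\delta}\in L^\infty_tL^2_x$ from mass conservation, which would give the flux in $L^2_tL^1_x$ rather than $L^2_tL^2_x$. Since this bound is supplied by Lemma~\ref{LMMA:Linfty}, the remaining steps are routine. The one further point to check is that the integration by parts is legitimate. This holds because $\rho_\delta$ is smooth and positive by Theorem~\ref{THM:FP Regularized Solution Theory}.
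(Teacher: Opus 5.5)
Your proposal is correct and is essentially the paper's own argument: test the Onsager form against $\varphi$, apply Cauchy--Schwarz using the $\delta$-uniform bounds $a_\delta\le\Lambda_K$ and $\|\rho_\delta\|_{L^\infty((0,T)\times K)}\le C^{(1)}_{T,K}$ from Lemma~\ref{LMMA:Linfty}, and close with the time-integrated entropy dissipation bound. The differences are cosmetic: the paper places $\sqrt{a_\delta}$ on each Cauchy--Schwarz factor and uses the weighted dissipation $\mathcal I_{a_\delta}$, which only changes the constant, and your Case~I treatment (dual of $H^1(\Omega)$, boundary term removed by the no-flux condition on $\rho_\delta$) is, if anything, cleaner than the paper's Neumann condition on the test function.
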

\begin{proof}
We use the entropy form~\eqref{EQ:FP Onsager Form}. Let $K=\Omega$ in \hyperlink{case1}{$\CaseI$} and $K=B_R$ in \hyperlink{case2}{$\CaseII$}. For any $\varphi\in H^1(\Omega)$ with $\bn\cdot\nabla \varphi_{|\partial\Omega}=0$ in \hyperlink{case1}{$\CaseI$} or $\varphi\in H^1_0(K)$ in \hyperlink{case2}{$\CaseII$}, we have
\[
\langle \partial_t\rho_\delta,\varphi\rangle
=
-\int_{K} a_\delta\rho_\delta\,\nabla\log\!\left(\frac{\rho_\delta}{\pi}\right)\cdot\nabla\varphi\,\md \bx.
\]
By the Cauchy--Schwarz and the assumption that $a_\delta\le \Lambda_K$ in~\eqref{EQ:Uniform Bound Upper Bound AB} and~\eqref{EQ:Uniform Bound Upper Bound AB Local}, we have,
\[
\int_{K} a_\delta\rho_\delta|\nabla\varphi|^2\md \bx \leq \Lambda_K \|\rho_\delta\|_{L^\infty(K)} \|\nabla \varphi\|^2_{L^2(K)} \leq \Lambda_K C_{T,K}
\|\nabla\varphi\|^2_{L^2(K)} 
\]
for some constant $C_{T,K}$, using the uniform $L^\infty$ bound on $\rho_\delta$ from parabolic regularity (established in the proof of Lemma~\ref{LMMA:H1-aRho}). As a result,
\begin{multline*}
|\langle \partial_t\rho_\delta,\varphi\rangle|
\le
\left(\int_{K} a_\delta\rho_\delta\left|\nabla\log\!\left(\frac{\rho_\delta}{\pi}\right)\right|^2\md \bx\right)^{1/2}
\left(\int_{K} a_\delta\rho_\delta|\nabla\varphi|^2\md \bx\right)^{1/2}\\
\le
\sqrt{\Lambda_K C_{T,K}}\left(\int_{K} a_\delta\rho_\delta\left|\nabla\log\!\left(\frac{\rho_\delta}{\pi}\right)\right|^2\md \bx\right)^{1/2}
\|\nabla\varphi\|_{L^2(K)},
\end{multline*} 
Taking the supremum over $\varphi$ with $\|\nabla\varphi\|_{L^2}=1$ then gives
\[
\|\partial_t\rho_\delta\|_{H^{-1}(K)}
\le
\sqrt{\Lambda_K C_{T,K}} \left(\int_{K} a_\delta\rho_\delta\left|\nabla\log\!\left(\frac{\rho_\delta}{\pi}\right)\right|^2\md \bx\right)^{1/2}.
\]
We can now integrate this in time over the interval $(0,T)$ and use~\eqref{EQ:Fisher Bound B by IC} to bound the right-hand side uniformly in $\delta$.
\end{proof}

\subsubsection{The sharp-interface limit}

We now show that $\rho_\delta(t)$ converges to $\rho(t)$, in an appropriate sense, for some $\rho(t)$ that solves~\eqref{EQ:FP HB} in the distributional sense.

\begin{definition}[Distributional solution to the Fokker--Planck equation]
\label{DEF:Weak Solution}
Let $\rho_0\in L^1(\Omega)$ with $\rho_0\geq 0$. A nonnegative function
$\rho\in L^\infty((0,T);L^1(\Omega))$ is a \emph{distributional solution}
of~\eqref{EQ:FP HB} (with~\eqref{EQ:FP BC} in \hyperlink{case1}{$\CaseI$}) with initial
datum $\rho_0$ if, for every test function $\varphi$ in the class
\[
\cT_I:=\big\{\varphi\in C^\infty([0,T]\times\overline{\Omega}):
\varphi(T,\cdot)=0,\ \partial_n\varphi=0\text{ on }\partial\Omega\big\}
\]
in \hyperlink{case1}{$\CaseI$}, respectively $\cT_{II}:=C^\infty_c([0,T)\times\bbR^d)$ in
\hyperlink{case2}{$\CaseII$},
\begin{equation}
\label{EQ:Weak Form}
\int_0^T\!\!\int_\Omega\rho\,\partial_t\varphi\,\md\bx\,dt
+\int_\Omega\rho_0\,\varphi(0,\cdot)\,\md\bx
+\int_0^T\!\!\int_\Omega b\,\rho\cdot\nabla\varphi\,\md\bx\,dt
+\int_0^T\!\!\int_\Omega a\,\rho\,\Delta\varphi\,\md\bx\,dt=0.
\end{equation}
\end{definition} 

\begin{remark}
The interface conditions are not imposed separately in Definition~\ref{DEF:Weak Solution}. They are encoded in the global weak formulation. If the limiting density is sufficiently regular on both sides of $\Gamma$, then splitting~\eqref{EQ:Weak Form} over $\Omega_-$ and $\Omega_+$ and integrating by parts gives the interface terms
\[
\int_\Gamma [(a\rho)]_\Gamma\,\partial_{\bn_\Gamma}\varphi\,dS
+
\int_\Gamma [\bn_\Gamma\cdot J(\rho)]_\Gamma\,\varphi\,dS .
\]
Since $\varphi|_\Gamma$ and $\partial_{\bn_\Gamma}\varphi|_\Gamma$ can be varied independently, the classical transmission conditions are
\[
    [a\rho]_\Gamma=0,
    \qquad
    [\bn_\Gamma\cdot J(\rho)]_\Gamma=0.
\]
In the present construction $a_-|_\Gamma=a_+|_\Gamma=\varepsilon$, so $[a\rho]_\Gamma=0$ is equivalent to equality of the traces of $\rho$. \end{remark}

It turns out that the limit of $\rho_\delta$ as $\delta\to 0$ is indeed a distributional solution to~\eqref{EQ:FP HB}.
\begin{theorem}\label{THM:Limit Equation}
Assume $F$ satisfies~\hyperlink{F-a}{\bf (F-a)}, the LSI~\eqref{EQ:LSI}, and, in {\rm \hyperlink{case2}{$\CaseII$}}, the exponential moment
condition~\eqref{EQ:Moment Cond}. Assume further that $\rho_0$ satisfies~\eqref{EQ:IC ASS}. Let $\rho_\delta$ be the classical
smooth solution of~\eqref{EQ:FP Regularized} with initial datum
$\rho_0$. Then there exist $\rho\in L^\infty((0,T);L^1(\Omega))$ and
a subsequence $\delta_k\to 0$ (not relabeled) such that:
\begin{enumerate}
\item[(i)] For every compact $K\Subset\Omega$ (with $K=\Omega$ in \hyperlink{case1}{$\CaseI$}),
\[
\rho_\delta\to\rho\text{ strongly in }L^1((0,T)\times K)
\text{ and a.e.\ on }(0,T)\times K.
\]
\item[(ii)] $\rho$ is a distributional solution of the hybrid
transmission problem~\eqref{EQ:FP HB} with initial datum $\rho_0$
in the sense of Definition~\ref{DEF:Weak Solution}.
\item[(iii)] $\rho(t)\rightharpoonup\rho_0$ weakly in $L^1(\Omega)$ as $t\downarrow 0$.
\end{enumerate}
\end{theorem}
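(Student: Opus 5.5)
We plan to follow the standard compactness route set up in the preceding lemmas, which reduces the proof to an Aubin--Lions argument followed by passage to the limit in the weak formulation.

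\emph{Step 1: compactness.} We fix $K$ (with $K=\Omega$ in \CaseI{} and $K=B_R$ in \CaseII). By Lemma~\ref{LMMA:Linfty} and Lemma~\ref{LMMA:Grad Bound}, $\rho_\delta=(\sqrt{\rho_\delta})^2$ has $\nabla\rho_\delta=2\sqrt{\rho_\delta}\nabla\sqrt{\rho_\delta}$ bounded in $L^2((0,T)\times K)$. Hence $\{\rho_\delta\}$ is bounded in $L^2((0,T);H^1(K'))$ for slightly smaller $K'$, and in $L^\infty$. Lemma~\ref{LMMA:Time-Deri Bound} bounds $\partial_t\rho_\delta$ in $L^2((0,T);H^{-1})$. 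The Aubin--Lions lemma, applied with $H^1(K')\Subset L^2(K')\subset H^{-1}$, then gives strong $L^2$, hence $L^1$, compactness on $(0,T)\times K'$. A diagonal argument over $R\in\mathbb N$ produces one subsequence converging strongly in $L^1_{\rm loc}$ and a.e., which gives (i). To upgrade to $L^1((0,T)\times\Omega)$ control and to obtain $\rho\in L^\infty((0,T);L^1)$ with unit mass in \CaseII, we use tightness: Lemma~\ref{LMMA:Moment} together with \eqref{EQ:KL Bound by IC} gives $\int|\bx|^2\rho_\delta\le c$ uniformly. Fatou's lemma then gives $\rho\ge0$ and $\|\rho(t)\|_{L^1}\le1$, and tightness gives equality.

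\emph{Step 2: identify the limit.} For each $\delta$, multiplying \eqref{EQ:FP Regularized} by $\varphi\in\cT_I$ or $\cT_{II}$ and integrating by parts gives \eqref{EQ:Weak Form} with $(a_\delta,b_\delta)$ in place of $(a,b)$. In \CaseI, the boundary terms vanish because $J_\delta\cdot\bn=0$ and $\partial_n\varphi=0$; the remaining term $\int_{\partial\Omega} a_\delta\rho_\delta\partial_n\varphi$ is zero. Since $\varphi$ has compact support in \CaseII, all integrals live on a fixed compact set $K$. There we have $a_\delta\to a$ and $b_\delta\to b$ a.e. (Lemma~\ref{LMMA:Uniform Coeff}(iv)), with uniform bounds $\Lambda_K$ and $M_K$, and $\rho_\delta\to\rho$ strongly in $L^1$. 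We write $a_\delta\rho_\delta-a\rho=a_\delta(\rho_\delta-\rho)+(a_\delta-a)\rho$. The first piece tends to zero by the uniform bound and strong convergence; the second tends to zero by dominated convergence, since $|a_\delta-a|\rho\le2\Lambda_K\rho\in L^1$. The same decomposition handles $b_\delta\rho_\delta$. The initial term $\int\rho_0\varphi(0)$ does not depend on $\delta$, so \eqref{EQ:Weak Form} holds, which is (ii). We expect this step to be the easiest, precisely because the Gibbs-preserving regularization keeps $b_\delta$ uniformly bounded (Remark~\ref{REM:Continuity of a}).

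\emph{Step 3: initial trace.} This is the step we expect to be the main obstacle, since the compactness above is only in $L^1_{t,x}$ and does not directly give time continuity. We plan to use the uniform bound on $\partial_t\rho_\delta$ in $L^2(H^{-1}_{\rm loc})$, which implies $\|\rho_\delta(t)-\rho_0\|_{H^{-1}(K)}\le C\sqrt t$ uniformly in $\delta$. This bound passes to the limit in distributions, because $\rho_\delta\to\rho$ in $C([0,T];H^{-1}_{\rm loc})$ by Aubin--Lions with the $L^\infty(L^2)$ bound. Hence $\int\rho(t)\psi\to\int\rho_0\psi$ for $\psi\in C_c^\infty$. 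To upgrade this to weak $L^1$ convergence, we need uniform integrability of $\{\rho(t)\}$, which we obtain from $\sup_t\KL(\rho(t)\|\pi)\le\KL(\rho_0\|\pi)$ via lower semicontinuity of $\KL$ (de la Vallée-Poussin with $s\log s$). In \CaseII, we also need tightness, which comes from the uniform second moment. Density of $C_c^\infty$ in $L^\infty$ test functions under these two controls then gives (iii).
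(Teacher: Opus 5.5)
Your proposal is correct and follows essentially the paper's route: Aubin--Lions from the uniform $L^2H^1$ and $L^2H^{-1}$ bounds, passage to the limit in the weak form using a.e.\ convergence of the uniformly bounded coefficients, and uniform integrability from the entropy bound for the initial trace. The main variation is in (iii), where you get time continuity on smooth test functions from the uniform $C\sqrt{t}$ modulus in $H^{-1}_{\rm loc}$ instead of testing the limit equation with $\eta(t)\psi(\bx)$; this is, if anything, more explicit than the paper. One small correction: in Case~I take $K'=\Omega$ rather than a slightly smaller set, since the lemmas hold on the whole domain there and statement (i) requires convergence on all of $\Omega$.
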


\begin{proof} 

(i): Fix a compact set $K\Subset\Omega$ (with $K=\Omega$ in \hyperlink{case1}{$\CaseI$}). By Lemma~\ref{LMMA:Grad Bound}, $\{\sqrt{\rho_\delta}\}$ is bounded in $L^2((0,T);H^1(K))$. Together with the uniform local $L^\infty$ bound on $\rho_\delta$ from Lemma~\ref{LMMA:Linfty}, this implies
\[
\int_0^T\!\!\int_K |\nabla\rho_\delta|^2\,\md\bx\,dt
=
4\int_0^T\!\!\int_K \rho_\delta
|\nabla\sqrt{\rho_\delta}|^2\,\md\bx\,dt
\le C_{T,K},
\]
and, using mass conservation, also gives a uniform bound for $\rho_\delta$ in $L^2((0,T);H^1(K))$. By Lemma~\ref{LMMA:Time-Deri Bound},
$\{\partial_t\rho_\delta\}$ is bounded in $L^2((0,T);H^{-1}(K))$. The Aubin--Lions lemma, with $H^1(K)\hookrightarrow\hookrightarrow L^2(K)\hookrightarrow H^{-1}(K)$, therefore gives, along a subsequence, $\rho_\delta\to\rho$ strongly in $L^2((0,T);L^2(K))$. In particular, $\rho_\delta\to\rho$ strongly in $L^1((0,T)\times K)$ and, after passing to a further subsequence, almost everywhere on $(0,T)\times K$. A diagonal argument over compact sets $K_j\uparrow\Omega$ produces a single subsequence.

(ii): We fix an admissible test function $\varphi$ as in
Definition~\ref{DEF:Weak Solution}. Since $\rho_\delta$ is a
classical solution of~\eqref{EQ:FP Regularized}, multiplying by
$\varphi$ and integrating by parts (using either the no-flux condition
and $\partial_n\varphi=0$ on $\partial\Omega$ in \hyperlink{case1}{$\CaseI$} or the compact
support of $\varphi$ in \hyperlink{case2}{$\CaseII$}), yields the identity
\begin{equation}
\label{EQ:rho-delta Weak Form}
\int_0^T\!\!\int_\Omega\rho_\delta\,\partial_t\varphi
+\int_\Omega\rho_0\,\varphi(0,\cdot)
+\int_0^T\!\!\int_\Omega b_\delta\rho_\delta\cdot\nabla\varphi
+\int_0^T\!\!\int_\Omega a_\delta\rho_\delta\Delta\varphi=0.
\end{equation}
To pass to the limit, let $K\Subset\Omega$ contain
$\supp\varphi(t,\cdot)$ for all $t$ (with $K=\Omega$ in \hyperlink{case1}{$\CaseI$}). On
$(0,T)\times K$ we have $\rho_\delta\to\rho$ strongly in $L^1$ (by (i)), $a_\delta\to a$ and $b_\delta\to b$ a.e.\ with uniform $L^\infty(K)$ bounds (Lemma~\ref{LMMA:Uniform Coeff}), and $\partial_t\varphi$,
$\nabla\varphi$, $\Delta\varphi$ are bounded. Hence
\[
a_\delta\rho_\delta\to a\rho,\quad
b_\delta\rho_\delta\to b\rho
\quad\text{strongly in }L^1((0,T)\times K),
\]
and each integral in~\eqref{EQ:rho-delta Weak Form} converges to
the corresponding one in~\eqref{EQ:Weak Form}.

(iii): We test~\eqref{EQ:Weak Form} against
$\varphi(t,\bx)=\eta(t)\psi(\bx)$ with $\eta\in C^\infty([0,T])$
supported near $t=0$ with $\eta(0)=1$ and $\psi$ in the appropriate
spatial test class. The uniform integrability of $\{\rho(t)\}$ near $t=0$ inherited from~\eqref{EQ:KL Bound by IC} (via the de~la~Vall\'ee--Poussin criterion), together with the mass normalization, identifies $\rho_0$ as the weak-$L^1$ initial trace and yields weak $L^1$-continuity at $t=0$. 
\end{proof}

It is important to realize that the transmission conditions across $\Gamma$ in~\eqref{EQ:FP HB} are encoded in the weak formulation. Whenever the limiting density has enough regularity for traces to be defined, this weak formulation recovers the classical transmission conditions across $\Gamma$. Moreover, we can use Lemma~\ref{LMMA:H1-aRho} to deduce a corresponding $H^1$ regularity statement for the limit $\rho$, and consequently the continuity of $\rho$ across the interface $\Gamma$.

\begin{theorem}\label{THM:Continuity of Rho}
Let $\rho$ be the distributional solution of the hybrid
transmission problem~\eqref{EQ:FP HB} provided by
Theorem~\ref{THM:Limit Equation}. Then:
\begin{enumerate}
\item[\textup{(i)}] $a \rho \in L^2\bigl((0,T); H^1_{\mathrm{loc}}(\Omega)\bigr)$,
   with $a\rho \in L^2\bigl((0,T); H^1(\Omega)\bigr)$ in~\hyperlink{case1}{$\CaseI$}.
\item[\textup{(ii)}] For almost every $t \in (0,T)$, the trace of $\rho(t,\cdot)$ on $\Gamma$ from $\Omega_-$ and from $\Omega_+$ coincide as elements of $H^{1/2}(\Gamma)$, that is,
   \begin{equation}\label{EQ:Continuity of Rho}
   \bigl[\rho\bigr]_\Gamma \;:=\;
   \rho\big|_{\Gamma^-} - \rho\big|_{\Gamma^+} \;=\; 0
   \quad \text{in } H^{1/2}(\Gamma).
   \end{equation}
\end{enumerate}
\end{theorem}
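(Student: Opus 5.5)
Part (i) follows from Lemma~\ref{LMMA:H1-aRho} by weak compactness and identification of the limit. Part (ii) then follows because an $H^1$ function on a neighbourhood of the interface has equal one-sided traces, and $a$ is continuous and equal to $\eps$ on $\Gamma$.

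\textbf{Step 1 (weak limit).} Fix a compact $K\Subset\Omega$ (with $K=\Omega$ in \CaseI). By Lemma~\ref{LMMA:H1-aRho}, $\{a_\delta\rho_\delta\}$ has uniformly bounded gradients in $L^2((0,T)\times K)$. Lemmas~\ref{LMMA:Uniform Coeff} and~\ref{LMMA:Linfty} give $\|a_\delta\rho_\delta\|_{L^\infty((0,T)\times K)}\le \Lambda_K C^{(1)}_{T,K}$, so the family is bounded in $L^2((0,T);H^1(K))$. Along the subsequence of Theorem~\ref{THM:Limit Equation}, after a further extraction, $a_\delta\rho_\delta\rightharpoonup g$ weakly in $L^2((0,T);H^1(K))$.

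\textbf{Step 2 (identify $g=a\rho$).} We know $\rho_\delta\to\rho$ a.e.\ and strongly in $L^1((0,T)\times K)$, and $a_\delta\to a$ a.e.\ with a uniform bound. Hence $a_\delta\rho_\delta\to a\rho$ strongly in $L^1((0,T)\times K)$, by dominated convergence for the $a_\delta-a$ part. Weak $L^2$ and strong $L^1$ limits agree, so $g=a\rho$. Weak lower semicontinuity of the norm gives $\|\nabla(a\rho)\|_{L^2((0,T)\times K)}^2\le\widetilde C_{T,K}$. Exhausting $\Omega$ by compacts and using a diagonal argument gives $a\rho\in L^2((0,T);H^1_{\rm loc}(\Omega))$. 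In \CaseI\ we take $K=\Omega$ directly and obtain $a\rho\in L^2((0,T);H^1(\Omega))$.

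\textbf{Step 3 (continuity of traces).} By Fubini, for a.e.\ $t$ the function $u(t):=a\rho(t,\cdot)$ lies in $H^1(K)$ for every $K$ in a countable exhaustion. Take a bounded open neighbourhood $U$ of any compact portion of $\Gamma$, with $\overline U\Subset\Omega$, so $u(t)\in H^1(U)$. Since $\Gamma\in\cC^1$ splits $U$ into $U\cap\Omega_\pm$, the traces of $u(t)|_{U\cap\Omega_-}$ and $u(t)|_{U\cap\Omega_+}$ on $\Gamma\cap U$ both exist in $H^{1/2}$ and coincide. To see this, approximate $u(t)$ in $H^1(U)$ by smooth functions, whose two one-sided traces agree trivially, and use continuity of each one-sided trace operator. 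Equivalently, the distributional gradient of $u(t)$ on $U$ has no singular part on $\Gamma$, and integration by parts against test functions gives $\int_\Gamma [u]\,\varphi\,\bn_\Gamma\,dS=0$.

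\textbf{Step 4 (pass from $a\rho$ to $\rho$).} On $\overline{\Omega_\pm}\cap\overline U$ the function $a$ has a $\cC^1$ extension up to $\Gamma$ on each side: $a_-=\eps e^{(F-F_0)/\eps}$ and $a_+=\eps$. It is bounded below by $a_{\min}$, and both one-sided limits equal $\eps$ on $\Gamma$. So $1/a_\pm$ is a $W^{1,\infty}$ multiplier on each side, $\rho|_{U\cap\Omega_\pm}=u/a_\pm\in H^1(U\cap\Omega_\pm)$, and its trace is $\eps^{-1}$ times the trace of $u$. Hence $\rho|_{\Gamma^-}=\rho|_{\Gamma^+}$ in $H^{1/2}(\Gamma)$, locally, and globally in \CaseI. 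In \CaseII, if $\Gamma$ is unbounded, the statement is understood in $H^{1/2}_{\rm loc}$.

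The main obstacle is the trace step when $\Gamma$ is only $\cC^1$ and possibly noncompact. I would handle it by localizing with a partition of unity and flattening $\Gamma$ with $\cC^1$ charts, which preserve $H^1$ and the trace theory. The other delicate point is Step 2, where the weak limit must be identified using only a.e.\ convergence of the coefficients; the uniform $L^\infty$ bounds make that routine.
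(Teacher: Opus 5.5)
Your proposal is correct and follows essentially the same route as the paper. Both arguments use Banach--Alaoglu on the uniform $L^2((0,T);H^1(K))$ bound from Lemma~\ref{LMMA:H1-aRho}, identify the weak limit as $a\rho$ via strong $L^1$ convergence and dominated convergence, use equality of one-sided traces of an $H^1$ function across the $\cC^1$ interface, and then divide by $a$, whose traces from both sides equal $\eps$. Your Step~4 ($1/a_\pm$ as a $W^{1,\infty}$ multiplier, so that $\rho$ itself is $H^1$ on each side with traces $\eps^{-1}$ times those of $a\rho$) and your localization remark for unbounded $\Gamma$ make explicit points the paper passes over, but they do not change the argument.
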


\begin{proof}
First, by Lemma~\ref{LMMA:H1-aRho}, the family
$\{a_\delta \rho_\delta\}_{\delta > 0}$ is uniformly bounded in $L^2\bigl((0,T); H^1(K)\bigr)$ for every compact $K \Subset \Omega$ (with $K = \Omega$ in~\hyperlink{case1}{$\CaseI$}). By the Banach--Alaoglu theorem, along a subsequence (which, by uniqueness of the limit we have established, can be taken to be the same as in
Theorem~\ref{THM:Limit Equation}), there exists
$w \in L^2\bigl((0,T); H^1(K)\bigr)$ such that
\begin{equation*}
a_\delta \rho_\delta \;\rightharpoonup\; w
\quad \text{weakly in } L^2\bigl((0,T); H^1(K)\bigr).
\end{equation*}

We claim that $w = a \rho$ almost everywhere on $(0,T) \times K$. Indeed, by Theorem~\ref{THM:Limit Equation} (i), $\rho_\delta \to \rho$ strongly in $L^1((0,T) \times K)$, and by Lemma~\ref{LMMA:Uniform Coeff} (iv), $a_\delta \to a$ almost everywhere on $\Omega$. Combined with the uniform $L^\infty$ bound on
$a_\delta$, the dominated convergence theorem yields
\[
a_\delta \rho_\delta \;\longrightarrow\; a \rho
\quad \text{in } L^1((0,T) \times K)
\]
and hence in the sense of distributions on $(0,T) \times K$. The weak limit in $L^2_t H^1_{\bx}$ must agree with the distributional
limit, so $w = a \rho$ almost everywhere. A standard diagonal argument over an exhaustion $K_j \uparrow \Omega$ extends this to $a \rho \in L^2\bigl((0,T); H^1_{\mathrm{loc}}(\Omega)\bigr)$. In~\hyperlink{case1}{$\CaseI$}, $K = \Omega$ and we obtain the global statement $a\rho \in L^2((0,T); H^1(\Omega))$.

To prove (ii), we fix $t \in (0,T)$ such that
$a(t,\cdot) \rho(t,\cdot) \in H^1_{\mathrm{loc}}(\Omega)$. By Fubini's theorem and part~(i), this holds for almost every $t$. Choose an open neighborhood $\mathcal{V}$ of $\Gamma$ with
$\mathcal{V} \Subset \Omega$, such that $\Gamma$ separates $\mathcal{V}$ into two open sets
$\mathcal{V}_\pm := \mathcal{V} \cap \Omega_\pm$. Since $\Gamma \in C^1$, both $\mathcal{V}_\pm$ are Lipschitz domains (in fact $C^1$), and the trace operators $\gamma^\pm \colon H^1(\mathcal{V}_\pm) \to H^{1/2}(\Gamma)$ are well-defined and continuous. 

The restriction of $a\rho$ to $\mathcal{V}$ lies in $H^1(\mathcal{V})$, since $\mathcal{V} \Subset \Omega$ and $a\rho \in H^1_{\mathrm{loc}}(\Omega)$. We now use the following elementary fact: if $u \in H^1(\mathcal{V})$ and $\mathcal{V}$ is partitioned by a $C^1$ hypersurface $\Gamma$ into Lipschitz domains $\mathcal{V}_\pm$, then the traces $\gamma^+(u|_{\mathcal{V}_+})$ and $\gamma^-(u|_{\mathcal{V}_-})$ on $\Gamma$ coincide in $H^{1/2}(\Gamma)$. Indeed, otherwise the distributional gradient $\nabla u$, computed in $\mathcal{V}$, would contain a surface measure on $\Gamma$ of the form $[\gamma^+(u) - \gamma^-(u)]\nu_\Gamma \, d\mathcal{H}^{d-1}|_\Gamma$, where $\nu_\Gamma$ is the unit normal. This contradicts $\nabla u \in L^2(\mathcal{V})$ (see, e.g.,~\cite[Lemma~5.4]{EvGa-Book15} or~\cite[Section 2.4]{Ziemer-Book12}). Applying this fact to $u = a(t,\cdot) \rho(t,\cdot)$ yields that, for almost every $t\in(0, T)$, we have
\[
 (a\rho)\big|_{\Gamma^-} - (a\rho)\big|_{\Gamma^+} \;=\; 0
   \quad \text{in } H^{1/2}(\Gamma)\,.
\]
We now use the assumption that $a|_\Gamma$ does not depend on whether we approach $\Gamma$ from $\Omega_-$ or from $\Omega_+$ (i.e., the trace of $a$ on $\Gamma$ from either side is the constant $\varepsilon$) to conclude the proof.
\end{proof}
In fact, the continuity statement~\eqref{EQ:Continuity of Rho} holds for almost every $t \in (0,T)$. After invoking the weak $L^1$-continuity result of Theorem~\ref{THM:Weak Continuity} in the next subsection, one may upgrade this to: for \emph{every} $t \in [0,T]$, the trace identity~\eqref{EQ:Continuity of Rho} holds in $H^{1/2}(\Gamma)$. Note also that the conclusion~\eqref{EQ:Continuity of Rho} concerns the trace of $\rho$ on $\Gamma$, not the trace of its gradient. The normal derivative $\bn_\Gamma \cdot \nabla \rho$ is generally \emph{not} continuous across $\Gamma$: indeed, by the flux-matching condition $[\bn_\Gamma \cdot J(\rho)]_\Gamma = 0$ encoded in~\eqref{EQ:FP HB}, together with the discontinuity
of $b$ across $\Gamma$, the normal flux $\nabla(a\rho) - b\rho$ must adjust its normal-derivative contribution to compensate for the jump in $b$.

\subsection{Exponential rate for hybrid dynamics}
\label{SUBSEC:Convergence Rate}

We now use the uniform entropy dissipation estimates and the result in the previous subsection to show that the exponential $\KL$ rate does not deteriorate as $\delta\to 0$, that is, the hybrid dynamics converges to the Gibbs distribution.

We first prove the following weak $L^1$-continuity result for the limit $\rho$, the weak solution to~\eqref{EQ:FP HB}.
\begin{theorem}
\label{THM:Weak Continuity}
Let $\rho \in L^\infty((0,T);L^1(\Omega))$ be the distributional solution
of~\eqref{EQ:FP HB} obtained in
Theorem~\ref{THM:Limit Equation}. Then $\rho$ admits a representative (still
denoted $\rho$) such that the map
\[
t \;\longmapsto\; \rho(t)
\]
is weakly continuous from $[0,T]$ into $L^1(\Omega)$, in the following
sense: for every $\psi$ in the spatial test class
$C^\infty(\overline{\Omega})$ with $\bn\cdot\nabla \psi=0$ on $\partial\Omega$ in \hyperlink{case1}{$\CaseI$} (respectively $\psi\in C_c^\infty(\bbR^d)$ in \hyperlink{case2}{$\CaseII$}), the map
\begin{equation}\label{EQ:t-to-rho test func}
t \;\longmapsto\; \int_\Omega \rho(t,\bx)\,\psi(\bx)\,\md\bx
\end{equation}
is continuous on $[0,T]$.

Moreover, for every $t\in[0,T]$, and along the subsequence
$\{\delta_k\}$ of Theorem~\ref{THM:Limit Equation}, we have that,
\begin{equation}
\label{EQ:Pointwise Weak Conv}
\rho_{\delta_k}(t) \;\rightharpoonup\; \rho(t)
\qquad \text{weakly in } L^1(\Omega).
\end{equation}
\end{theorem}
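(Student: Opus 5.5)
Our plan is to work at the level of the regularized solutions and obtain equicontinuity in time of the scalar maps $g_\delta(t):=\int_\Omega\rho_\delta(t,\bx)\psi(\bx)\,\md\bx$, uniformly in $\delta$. We then use Arzel\`a--Ascoli to identify the limit with a continuous representative of $t\mapsto\int\rho(t)\psi$. Finally, uniform integrability upgrades convergence against smooth test functions to weak $L^1$ convergence.

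\textbf{Step 1 (uniform H\"older bound).} Fix $\psi$ in the stated spatial class and let $K\supset\supp\psi$ ($K=\Omega$ in \CaseI). Since $\rho_\delta$ is classical, using the Onsager form~\eqref{EQ:FP Onsager Form}, the boundary condition~\eqref{EQ:FP Onsager Form BC} (or the compact support of $\psi$) and Cauchy--Schwarz, we get for $0\le s<t\le T$
\[
|g_\delta(t)-g_\delta(s)|\le\int_s^t\!\!\int_K a_\delta\rho_\delta\Bigl|\nabla\log\tfrac{\rho_\delta}{\pi}\Bigr||\nabla\psi|\,\md\bx\,\md\tau
\le \Bigl(\Lambda_K\|\nabla\psi\|_{L^\infty}^2(t-s)\Bigr)^{1/2}\Bigl(\int_s^t\cI_{a_\delta}\,\md\tau\Bigr)^{1/2}.
\]
Here the first factor uses $a_\delta\le\Lambda_K$ and $\int\rho_\delta=1$, so Lemma~\ref{LMMA:Linfty} is not even needed. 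By~\eqref{EQ:Fisher Bound} the last factor is at most $\KL(\rho_0\|\pi)^{1/2}$. Thus $\{g_\delta\}$ is uniformly $\tfrac12$-H\"older on $[0,T]$ and bounded by $\|\psi\|_\infty$, with $g_\delta(0)=\int\rho_0\psi$ for all $\delta$.

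\textbf{Step 2 (identification and representative).} By Theorem~\ref{THM:Limit Equation}(i), $g_{\delta_k}\to g(t):=\int\rho(t)\psi$ in $L^1(0,T)$. Arzel\`a--Ascoli then gives uniform convergence of a further subsequence to a continuous limit, which must equal $g$ a.e. Uniqueness of the limit means no further subsequence is needed, so the whole sequence $g_{\delta_k}$ converges uniformly. We take a countable family $\{\psi_j\}$ that is dense in the test class for the sup norm on compacts; in \CaseII\ we take it dense in $C_0$, and in \CaseI\ the Neumann-smooth functions are dense in $C(\overline\Omega)$. This yields a common null set $N\subset(0,T)$ off which $\int\rho(t)\psi_j$ equals the continuous limit for every $j$. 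For $t\notin N$ we keep $\rho(t)$.

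For $t\in N$ (and $t=0,T$), we define $\rho(t)$ as a weak $L^1$ limit of $\rho(t_n)$ with $t_n\to t$, $t_n\notin N$. Such a limit exists by Dunford--Pettis, since $\sup_t\KL(\rho(t)\|\pi)\le\KL(\rho_0\|\pi)$ by lower semicontinuity and~\eqref{EQ:KL Bound by IC}. This gives uniform integrability, and in \CaseII\ Lemma~\ref{LMMA:Moment} gives tightness. The limit is unique because it is determined by its pairings with the $\psi_j$, which converge by continuity. At $t=0$ this agrees with $\rho_0$, consistent with Theorem~\ref{THM:Limit Equation}(iii).

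\textbf{Step 3 (pointwise weak convergence~\eqref{EQ:Pointwise Weak Conv}).} For each fixed $t$, Step 2 gives $\int\rho_{\delta_k}(t)\psi\to\int\rho(t)\psi$ for all $\psi$ in the test class. The family $\{\rho_{\delta_k}(t)\}_k$ is uniformly integrable and tight by~\eqref{EQ:KL Bound by IC}, de la Vall\'ee--Poussin, and Lemma~\ref{LMMA:Moment}. Density of the test class in $C_0$ or $C(\overline\Omega)$, combined with tightness, extends the convergence to all $\phi\in C_b$. Dunford--Pettis precompactness then says every weak-$L^1$ cluster point has the same pairings with $C_b$, hence equals $\rho(t)$. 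So $\rho_{\delta_k}(t)\rightharpoonup\rho(t)$ against all of $L^\infty$.

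The same argument, with tightness to handle the tails, upgrades continuity of $t\mapsto\int\rho\psi$ to weak continuity in $L^1$. We expect the main technical care to lie in Step 3: passing from smooth test functions to $L^\infty$ test functions relies on uniform integrability, which the entropy bound supplies. The H\"older estimate of Step 1 itself is straightforward.
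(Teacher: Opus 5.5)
Your proposal is correct and has the same structure as the paper's proof. Both proofs prove a $\delta$-uniform modulus of continuity for $g_\delta(t)=\int_\Omega\rho_\delta(t)\psi\,\md\bx$. Both upgrade the a.e.-in-$t$ convergence from Theorem~\ref{THM:Limit Equation}(i) to convergence at every $t$. Both then use de~la~Vall\'ee--Poussin, tightness from Lemma~\ref{LMMA:Moment}, and Dunford--Pettis, and identify limits through a countable dense family of test functions.

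The genuine difference is the equicontinuity estimate.
\begin{itemize}
\item The paper integrates by parts twice to get $\frac{d}{dt}g_\delta=\int_\Omega(b_\delta\rho_\delta\cdot\nabla\psi+a_\delta\rho_\delta\Delta\psi)\,\md\bx$. It then uses only mass conservation and the uniform $L^\infty$ bounds on $a_\delta,b_\delta$ from Lemma~\ref{LMMA:Uniform Coeff}. This gives a Lipschitz modulus with no entropy input. However, it rests on the drift bound, i.e.\ the cancellation $\eps|E-1|\,|\nabla\chi_\delta|=\cO(1)$ that comes from continuity of $a$ across $\Gamma$. In \CaseI\ it also needs $\partial_n\psi=0$ for the second integration by parts.
\item You integrate by parts once in the Onsager form and spend the dissipation bound~\eqref{EQ:Fisher Bound}. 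This gives only a $\tfrac12$-H\"older modulus. In exchange, you need just $a_\delta\le\Lambda_K$, $\int_\Omega\rho_\delta=1$ and the no-flux condition; the drift bound and the Neumann condition on $\psi$ are never used.
\end{itemize}
The later steps use the entropy bound anyway, so your route is a little more economical for this theorem, while the paper's gives stronger time regularity. You are also more explicit than the paper about how the representative is defined on the exceptional null set.

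One side claim should be dropped. You assert that the Neumann-smooth test class is dense in $C(\overline\Omega)$. This is unnecessary, and it can fail when the boundary is only $\cC^2$. For example, in $d=2$: wherever $\nabla\psi\neq0$ on $\partial\Omega$, the boundary is an integral curve of the smooth field $\nabla\psi$. So a nowhere-$\cC^3$ boundary forces $\psi$ to be locally constant on $\partial\Omega$.

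What you actually need is that $C_c^\infty(\Omega)$, which lies in the test class, separates elements of $L^1(\Omega)$. Combined with the uniform integrability you already have, the Dunford--Pettis cluster-point argument then gives $\rho_{\delta_k}(t)\rightharpoonup\rho(t)$ in $L^1(\Omega)$ directly, without passing through $C_b$.
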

\begin{proof}
Fix $\psi$ in the spatial test class. For each fixed $\delta>0$ the map $t\mapsto\int_\Omega\rho_\delta(t)\psi\,\md\bx$ is $C^1$ on $[0,T]$: indeed $\rho_\delta\in C^\infty$ jointly in $(t,\bx)$ by Theorem~\ref{THM:FP Regularized Solution Theory}, so $\partial_t\rho_\delta$ is continuous and differentiation under the integral sign is justified by dominated convergence ($\Omega$ being bounded in \hyperlink{case1}{$\CaseI$}, and $\psi$ compactly supported in \hyperlink{case2}{$\CaseII$}). Differentiating and using~\eqref{EQ:FP Regularized} with integration by parts gives
\begin{equation}\label{EQ:Time-Derivative-G}
\frac{d}{dt}\int_\Omega\rho_\delta(t)\psi\,\md\bx
=\int_\Omega\bigl(b_\delta\rho_\delta\cdot\nabla\psi+a_\delta\rho_\delta\,\Delta\psi\bigr)\,\md\bx,
\end{equation}
the boundary terms vanishing by~\eqref{EQ:FP Regularized BC} and $\bn_{|\partial\Omega}\cdot\nabla\psi=0$ in \hyperlink{case1}{$\CaseI$}, or by compact support of $\psi$ in \hyperlink{case2}{$\CaseII$}. By Lemma~\ref{LMMA:Uniform Coeff} and mass conservation~\eqref{EQ:Mass Conservation}, the right-hand side of~\eqref{EQ:Time-Derivative-G} is bounded by a constant $C_\psi$ depending only on $\|\nabla\psi\|_{L^\infty}$, $\|\Delta\psi\|_{L^\infty}$, and the $L^\infty$ bounds on $(a_\delta,b_\delta)$ on $\mathrm{supp}\,\psi$, uniformly in $\delta$ and $t$. Hence the family $\{t\mapsto\int_\Omega\rho_\delta(t)\psi\,\md\bx\}_{\delta>0}$ is uniformly Lipschitz on $[0,T]$.

By Theorem~\ref{THM:Limit Equation} (i) and Fubini's theorem, $\int_\Omega\rho_{\delta_k}(t)\psi\,\md\bx\to\int_\Omega\rho(t)\psi\,\md\bx=:G_\psi(t)$ for a.e.\ $t\in(0,T)$. The uniform Lipschitz bound then forces $G_\psi$ to extend to a Lipschitz function on $[0,T]$ and the convergence to hold for \emph{every} $t\in[0,T]$. Moreover, by~\eqref{EQ:KL Bound by IC} and the upper bound on $\log\pi$ from~\hyperlink{F-a}{\bf (F-a)}, $\sup_{\delta>0}\int_\Omega\rho_\delta(t)\log\rho_\delta(t)\,\md\bx<\infty$ for each $t$, so $\{\rho_\delta(t)\}_\delta$ is uniformly integrable (de~la~Vall\'ee--Poussin) and, in \hyperlink{case2}{$\CaseII$}, tight by the moment bound of Lemma~\ref{LMMA:Moment}. The Dunford-Pettis theorem then gives weak-$L^1$ relative compactness. Since every weak-$L^1$ limit of $\{\rho_{\delta_k}(t)\}$ is pinned down on a countable dense subset of the test class by the values $G_\psi(t)$, it is unique. Hence, the full sequence satisfies $\rho_{\delta_k}(t)\rightharpoonup\rho(t)$ in $L^1(\Omega)$ for every $t\in[0,T]$, and (after redefining $\rho$ on the null set where needed) $t\mapsto\rho(t)$ is the asserted weakly continuous representative, with $\int_\Omega\rho(t)\psi\,\md\bx=G_\psi(t)$ continuous for every $\psi$ in the test class. This proves both~\eqref{EQ:t-to-rho test func} and~\eqref{EQ:Pointwise Weak Conv}. 
\end{proof}

We are finally ready to show that the hybrid dynamics converges to the equilibrium exponentially fast.
\begin{theorem}
\label{THM:Exponential Rate Limit}
Under the hypotheses of Theorem~\ref{THM:Limit Equation}, the distributional
solution $\rho$ of~\eqref{EQ:FP HB} (chosen as the weakly continuous representative provided by
Theorem~\ref{THM:Weak Continuity}) satisfies, for every $t\geq 0$,
\begin{equation}\label{EQ:Exp Rate HD}
\KL\bigl(\rho(t)\,\|\,\pi\bigr)
\;\leq\;e^{-2 a_{\min}\lambda_{\mathrm{LSI}}\,t}\,\KL(\rho_0\,\|\,\pi).
\end{equation}
\end{theorem}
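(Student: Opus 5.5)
The plan is to pass to the limit $\delta_k\to0$ in the uniform decay estimate~\eqref{EQ:KL Decay} of Lemma~\ref{LMMA:KL Decay}, using weak lower semicontinuity of the relative entropy. Since the rate $2a_{\min}\lambda_{\mathrm{LSI}}$ in~\eqref{EQ:KL Decay} is independent of $\delta$, and $\rho_\delta(0)=\rho_0$ for every $\delta$, we already know that
\[
\KL(\rho_{\delta_k}(t)\,\|\,\pi)\le e^{-2a_{\min}\lambda_{\mathrm{LSI}}t}\,\KL(\rho_0\,\|\,\pi)
\]
holds for all $t\ge0$ and all $k$. It therefore suffices to show that $\KL(\rho(t)\|\pi)\le\liminf_k\KL(\rho_{\delta_k}(t)\|\pi)$ for every fixed $t$.

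\textbf{Step 1: weak convergence at every fixed time.} Fix $t\in[0,T]$, with $T>t$ arbitrary; a diagonal argument in $T$ covers all $t\ge0$. By Theorem~\ref{THM:Weak Continuity}, along the subsequence of Theorem~\ref{THM:Limit Equation} we have $\rho_{\delta_k}(t)\rightharpoonup\rho(t)$ weakly in $L^1(\Omega)$. Using the weakly continuous representative is exactly what makes this hold at every $t$ rather than only for a.e.\ $t$. At $t=0$ the claim is trivial because $\rho(0)=\rho_0$ by Theorem~\ref{THM:Limit Equation}(iii).

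\textbf{Step 2: lower semicontinuity.} We use the Donsker--Varadhan variational formula already invoked in Lemma~\ref{LMMA:Moment}:
\[
\KL(\mu\|\pi)=\sup_{\Phi\in C_b}\Bigl\{\int\Phi\,\mu\,\md\bx-\log\int e^{\Phi}\pi\,\md\bx\Bigr\}.
\]
For each fixed bounded continuous $\Phi$, the map $\mu\mapsto\int\Phi\mu$ is continuous under weak $L^1$ convergence. Hence the supremum is weakly lower semicontinuous, and we obtain
\[
\KL(\rho(t)\|\pi)\le\liminf_k\KL(\rho_{\delta_k}(t)\|\pi)\le e^{-2a_{\min}\lambda_{\mathrm{LSI}}t}\KL(\rho_0\|\pi).
\]
We also need $\rho(t)$ to be a probability density. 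Nonnegativity follows from the a.e.\ convergence in Theorem~\ref{THM:Limit Equation}(i). Unit mass follows by testing the weak $L^1$ convergence against $\psi\equiv1$: in \CaseI\ this is directly admissible, and in \CaseII\ we use the tightness supplied by Lemma~\ref{LMMA:Moment} and the uniform KL bound~\eqref{EQ:KL Bound by IC}.

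\textbf{Main obstacle.} The delicate point is that weak-$L^1$ convergence was established in Theorem~\ref{THM:Weak Continuity} only against the restricted test class: Neumann-compatible smooth $\psi$ in \CaseI, and $C_c^\infty$ in \CaseII. Applying the variational formula requires testing against arbitrary $\Phi\in C_b$, or $L^\infty$. My first approach is to note that the proof of Theorem~\ref{THM:Weak Continuity} already gives full weak-$L^1$ relative compactness via Dunford--Pettis, using uniform integrability and tightness. Any limit point is identified by a countable determining family, and Neumann-compatible smooth functions are dense enough to separate $L^1$ functions (they contain $C_c^\infty(\Omega)$). So the convergence is genuinely weak $L^1$, that is, against all of $L^\infty$, and Step 2 applies.

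If that approach is troublesome, the alternative is to restrict the supremum to $\Phi\in C_c^\infty$ and argue by density. The Donsker--Varadhan supremum over $C_c^\infty$ still equals the KL divergence for probability densities, by monotone approximation of $\log(\rho/\pi)$ truncated and mollified, combined with the tightness of $\pi$.
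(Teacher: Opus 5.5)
Your proposal is correct and follows the same route as the paper. You combine the every-$t$ weak $L^1$ convergence $\rho_{\delta_k}(t)\rightharpoonup\rho(t)$ from Theorem~\ref{THM:Weak Continuity}, weak lower semicontinuity of $\KL$, and the $\delta$-uniform decay estimate~\eqref{EQ:KL Decay} with the $\delta$-independent datum $\rho_0$. Your extra checks (Dunford--Pettis compactness with identification on $C_c^\infty$, unit mass, and the diagonal argument in $T$) only make explicit what the paper leaves implicit or packs into Theorem~\ref{THM:Weak Continuity}.
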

\begin{proof}
Fix $t\geq 0$. By Theorem~\ref{THM:Weak Continuity}, $\rho_{\delta_k}(t)\rightharpoonup\rho(t)$ weakly in $L^1(\Omega)$ along the subsequence of Theorem~\ref{THM:Limit Equation}. Lower semicontinuity of $\KL$ under weak $L^1$-convergence gives
\[
\KL\bigl(\rho(t)\,\|\,\pi\bigr)
\;\leq\;\liminf_{k\to\infty}\KL\bigl(\rho_{\delta_k}(t)\,\|\,\pi\bigr).
\]
The entropy decay estimate~\eqref{EQ:KL Decay} applied to each smooth classical solution $\rho_{\delta_k}$ yields
\[
\KL\bigl(\rho_{\delta_k}(t)\,\|\,\pi\bigr)
\;\leq\;e^{-2a_{\min}\lambda_{\mathrm{LSI}}\,t}\,\KL(\rho_0\,\|\,\pi),
\]
where the right-hand side is independent of $k$ because the initial datum is independent of $\delta$. Taking $\liminf$ gives~\eqref{EQ:Exp Rate HD}.
\end{proof}
\begin{remark}
    It is clear that Theorem~\ref{THM:Exponential Rate Limit} holds for a.e. $t\ge 0$ without the weak continuity result in Theorem~\ref{THM:Weak Continuity}. The strong $L^1((0,T)\times K)$ convergence gives pointwise $L_{\rm loc}^1$ convergence for a.e. $t$, which is stronger than weak convergence. Hence, $\KL$ lower semicontinuity applies. The weak continuity result of Theorem~\ref{THM:Weak Continuity} upgrades ``a.e. $t$" to ``every $t$".
\end{remark}

The rate from Theorem~\ref{THM:Exponential Rate Limit} is $2a_{\min}\lambda_{\text{LSI}}$ where $a_{\min} = \eps \min\{1, e^{(F_{\min} - F_0)/\eps}\}$. If the interior well is deeper than the switching level (that is, $F_{\min}<F_0$, which is the interesting case, as otherwise the hybridization region does not contain the wells), $a_{\min}$ is exponentially small in $1/\eps$, and the rate from Theorem~\ref{THM:Exponential Rate Limit} is worse than the analogous bound for pure Langevin. Thus, Theorem~\ref{THM:Exponential Rate Limit} should be understood as a qualitative ``rate does not deteriorate under regularization" result rather than a quantitative improvement over pure Langevin. The genuine quantitative advantage of the hybrid construction is captured at the level of mean exit times and is established in Section~\ref{SEC:Exit} next.

\section{Mean exit time comparison}
\label{SEC:Exit}

The analysis in the previous sections shows that the hybrid dynamics preserves the Gibbs distribution and converges to equilibrium at an exponential rate comparable to that of the regularized system. The entropy-based convergence results, however, do not fully capture the mechanism responsible for slow mixing in multimodal landscapes, that is, the presence of metastable states separated by energy barriers.

We now complement the entropy-based analysis by studying the metastability properties of the hybrid dynamics. In particular, we investigate how the hybrid construction modifies the transition mechanism between metastable wells and whether this can potentially lead to an improvement in transition times. We will focus on the limit object whose existence was established in Theorem~\ref{THM:Limit Equation}. Therefore, we will use the explicit discontinuous coefficients without further justification.
 
We now introduce the second partitioning convention. In the preceding sections, the switching interface was the full level set
\[
    \Gamma_{F_0}:=\{\bx\in\Omega:F(\bx)=F_0\},
\]
which separates $\Omega$ into two regions. In the metastability setting considered below, the potential is radially symmetric and may be nonconvex, so $\Gamma_{F_0}$ may have several connected components. We therefore take the switching interface to be only the outermost connected component, denoted by $\Gamma_{F_0}^{\mathrm{out}}$, namely the component adjacent to the exterior region, and set $\Gamma:=\Gamma_{F_0}^{\mathrm{out}}$.
The regions $\Omega_-$ and $\Omega_+$ are then defined as the interior and exterior regions separated by this component: $\Omega_-$ is the region enclosed by $\Gamma$, while $\Omega_+$ is the exterior region. The hybrid coefficients are defined by the same sharp-interface construction as before, but with this outermost component serving as the interface. Since $F$ is radially symmetric in this setting, the partition can equivalently be described in terms of the radial variable $r=\|\bx\|$.

\subsection{Radially symmetric setting}
\begin{figure}
\centering
\begin{tikzpicture}[scale=1.1, transform shape]
\begin{axis}[
    width=8cm,
    height=4cm,
    xmin=0, xmax=4.5,
    ymin=0.2, ymax=5.2,
    axis lines=left,
    ylabel={$U(s)$},
    xtick=\empty,
    ytick=\empty,
    clip=false,
    domain=0:4,
    samples=200
]

\def\f{0.3*(x-2)^4 - 1.2*(x-2)^2 + 0.45*x + 2}
\addplot[thick, blue] {\f};

\def\rhog{0.5}
\def\rhos{2.0}
\def\rhou{3.3}

\pgfmathsetmacro{\Ug}{0.3*(\rhog-2)^4 - 1.2*(\rhog-2)^2 + 0.45*\rhog + 2}
\pgfmathsetmacro{\Us}{0.3*(\rhos-2)^4 - 1.2*(\rhos-2)^2 + 0.45*\rhos + 2}
\pgfmathsetmacro{\Uu}{0.3*(\rhou-2)^4 - 1.2*(\rhou-2)^2 + 0.45*\rhou + 2}

\def\rR{3.7}
\pgfmathsetmacro{\Ur}{0.3*(\rR-2)^4 - 1.2*(\rR-2)^2 + 0.45*\rR + 2}
\node[right] at (axis cs:\rR,\Ur) {$F_r$};
\node[below, gray] at (axis cs:\rR,0) {$r$};

\addplot[only marks, mark=*, mark size=1.8pt] coordinates {
    (\rhog,\Ug)
    (\rhos,\Us)
    (\rhou,\Uu)
    (\rR,\Ur)
};

\addplot[dashed] coordinates {(\rhog,0) (\rhog,\Ug)};
\addplot[dashed] coordinates {(\rhos,0) (\rhos,\Us)};
\addplot[dashed] coordinates {(\rhou,0) (\rhou,\Uu)};
\addplot[dashed] coordinates {(\rR,0) (\rR,\Ur)};

\node[below] at (axis cs:\rhog,0) {$r_g$};
\node[below] at (axis cs:\rhos,0) {$r_s$};
\node[below] at (axis cs:\rhou,0) {$r_u$};

\node[above] at (axis cs:\rhog,\Ug) {$F_g$};
\node[above] at (axis cs:\rhos,\Us) {$H$};
\node[above] at (axis cs:\rhou,\Uu) {$F_u$};

\node[above] at (axis cs:1.3,3.9) {$\boxed{\Omega_-=\{|\bx|<r\}}$};
\node[above] at (axis cs:5,3.9) {$\boxed{\Omega_+=\{|\bx|>r\}}$};

\end{axis}
\end{tikzpicture}

\caption{A schematic radial potential with global minimum $r_g$, saddle $r_s$, local minimum $r_u$, and switching radius $r$.}
\label{fig:doublewell}
\end{figure}

We will compare the hybrid dynamics with the standard overdamped Langevin dynamics in a radially symmetric landscape. We consider the case when the state space $\Omega=\bbR^d$. 

Let $r>0$ be given, and $U(s)\in \cC^2([0,\infty))$ be a potential that has a double-well structure on $[0,r)$ and is strictly convex on $[r,\infty)$; see Figure~\ref{fig:doublewell}. We construct the potential $F$ from $U$ as
\[ 
    F(\bx)=U(|\bx|)\,.
\]
We assume that $U'(0)=0$ and that $U$ satisfies the usual even radial compatibility conditions at $s=0$, ensuring that $F(\bx)=U(|\bx|)$ is $\cC^2$ on $\mathbb R^d$.
The switching surface is the sphere
\[
\Gamma_r:=\{\bx\in\mathbb R^d:|\bx|=r\},
\] which separates the inner and outer regions
\[
\Omega_-:=\{\bx\in\mathbb R^d:|\bx|<r\},
\qquad
\Omega_+:=\{\bx\in\mathbb R^d:|\bx|>r\}.
\] 
The hybrid dynamics~\eqref{EQ:SDE HB} coincides with overdamped Langevin dynamics in $\Omega_+$ and replaces the inner dynamics by a drift-free diffusion with appropriately chosen diffusion coefficient:
\[
a(\bx) =
\begin{cases}
a_-:=\eps e^{(F(\bx)-F_r)/\eps}, & \bx \in \Omega_-,\\
a_+:=\eps, & \bx \in \Omega_+,
\end{cases}
\qquad
b(\bx) =
\begin{cases}
b_-:=0, & \bx \in \Omega_-,\\
b_+:=-\nabla F(\bx), & \bx \in \Omega_+,
\end{cases}
\]
where
\[
F_r:=U(r),
\]
so that $F(\bx)=F_r$ for every $\bx\in\Gamma_r$.

With the potential $U$ selected here, the overdamped Langevin dynamics is left unchanged outside the sphere $\Gamma_r$, where the potential is confining and already drives the process back toward the interior. 
The main question is whether the switching of the dynamics to the adaptive diffusion in $\Omega_-$ improves the mechanism responsible for transitions and, if so, what it does to the overall convergence to equilibrium.

\subsection{The conductance density \texorpdfstring{$a\pi$}{a pi}}

For a reversible diffusion with generator~\eqref{EQ:Generator}
and invariant density $\mu(\md\bx)=\pi(\bx)\,\md\bx$, the associated Dirichlet form is
\[
\mathcal E(f,f)
=
\int_{\mathbb R^d}a(\bx)|\nabla f(\bx)|^2\,d\mu(\bx)
=
\int_{\mathbb R^d}a(\bx)\pi(\bx)|\nabla f(\bx)|^2\,\md\bx.
\] Thus the relevant coefficient in all variational problems is the product
\[ w(\bx):=a(\bx)\pi(\bx),
\] which we view as the \emph{conductance density}. Regions where $w$ is large make variations of the committor energetically expensive, whereas regions where $w$ is small allow the committor to change at low Dirichlet cost. Thus, small conductance regions form bottlenecks for transitions. This point is worth emphasizing: from the perspective of capacities, committors, and relaxation estimates, the process does not see $a$ and $\pi$ separately. What enters the variational formulas is their product $a\pi$. Therefore, the right quantity to compare between the two dynamics is not the diffusion coefficient $a$ alone but the conductance density $a\pi$. 

For the hybrid dynamics, the matching condition gives
\[
a(\bx)\pi(\bx)
=
\frac{\eps}{Z}e^{-F_r/\eps}
\qquad\text{for }\bx\in\Omega_-.
\] In particular, the conductance density is constant in the entire inner region. This simple identity is the main structural reason that the hybrid dynamics behaves differently from overdamped Langevin. The dynamics still preserves the same invariant law, but the geometry relevant to transitions is no longer governed by the wells and saddle of $F$ in the same way as in overdamped Langevin dynamics.

\subsection{Mean exit time comparison}

We next compare the transition from the upper well to the global minimum. The comparison is intended to isolate the main advantage of the hybrid construction. In a double-well landscape, the difficult event is the passage from the upper local minimum to the deeper well. For overdamped Langevin, this passage is controlled by the saddle height. For the hybrid dynamics, the expectation is that the saddle no longer appears in the leading exponential barrier.

We consider a radially symmetric potential with both local minima and global minima. Assume that $U\in \cC^2([0,\infty))$,  $s^{d-1}e^{-U(s)/\eps}\in L^1((0,\infty))$ for all sufficiently small $\eps>0$, and that there exist
radii
\[
0<r_g<r_s<r_u<r
\]
such that:
\begin{enumerate}[label=(\roman*)]
\item $U'(r_g)=U'(r_s)=U'(r_u)=0$ and these are the only critical points of $U$
in $(0,r]$;
\item the critical points are nondegenerate, namely, $U''(r_g)>0$, $U''(r_u)>0$ and $U''(r_s)<0$;
\item $U'(s)>0$ on $(r_g,r_s)\cup(r_u,\infty)$ and $U'(s)<0$ on
$(0,r_g)\cup(r_s,r_u)$;
\item  $U(s)\ge c_0s^\alpha-C_0$ for all sufficiently large $s$, for some $c_0,\alpha>0$ and $C_0\in\mathbb R$.
\end{enumerate}
We set
\[
F_g:=U(r_g),\qquad F_u:=U(r_u),\qquad H:=U(r_s),\qquad F_r:=U(r).
\]
and assume $F_g < F_u < H$. See Figure~\ref{fig:doublewell} for an illustration of the function $U$.

The transition of interest starts from the upper well shell
\[
\{\bx\in \mathbb{R}^d: |\bx|=r_u\}
\]
and ends on the lower well shell
\[
\{\bx\in \mathbb{R}^d: |\bx|=r_g\}.
\]

\begin{theorem}[Mean transition time comparison]\label{thm:time}
Let $\bx_u\in\bbR^d$ satisfy $|\bx_u|=r_u$, and define
\[
    \tau_g:=\inf\{t\ge0:\ |\bX_t|=r_g\}.
\]
As $\eps\to0$, the mean transition times satisfy
\[
\mathbb E_{\bx_u}\tau_g^{\mathrm{OL}}
\sim
\frac{2\pi}{\sqrt{U''(r_u)\,|U''(r_s)|}}
\left(\frac{r_u}{r_s}\right)^{d-1}
\exp\!\left(\frac{H-F_u}{\eps}\right),
\]
whereas
\[
\mathbb E_{\bx_u}\tau_g^{\mathrm{HD}}
\sim
\frac{1}{U''(r_g)}
\exp\!\left(\frac{F_r-F_g}{\eps}\right).
\]
Here, $\mathbb E_{\bx_u}\tau_g^{\mathrm{OL}}$ and
$\mathbb E_{\bx_u}\tau_g^{\mathrm{HD}}$ denote the mean transition times for the
overdamped Langevin and hybrid dynamics, respectively, started from
$\bx_u\in\{\bx\in\mathbb R^d:|\bx|=r_u\}$.
\end{theorem}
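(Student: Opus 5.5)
The plan is to reduce both dynamics to one-dimensional radial diffusions and then solve the Poisson problem for the mean hitting time explicitly. The answer comes out as a double integral, which is then evaluated by Laplace asymptotics.

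\emph{Radial generator.} Since $F(\bx)=U(|\bx|)$ and the coefficients are radial, $|\bX_t|$ is itself a Markov diffusion on $(0,\infty)$. It is reversible with respect to the radial marginal $s^{d-1}e^{-U(s)/\eps}\,ds$. For radial $f(\bx)=g(|\bx|)$, the generator~\eqref{EQ:Generator} acts on either side of $\Gamma_r$ as
\[
\cL g=\frac{1}{s^{d-1}e^{-U(s)/\eps}}\,\frac{d}{ds}\Bigl(s^{d-1}\,a(s)e^{-U(s)/\eps}\,g'(s)\Bigr).
\]
On $\Omega_+$ this follows from $b_+=-\nabla F$ and $a_+=\eps$. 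On $\Omega_-$ it follows from $b_-=0$ and $a_-e^{-U/\eps}=\eps e^{-F_r/\eps}$ being constant. Across $s=r$, the domain conditions of $\cL$ (continuity of $g$ and of $a\,g'$, with $a$ continuous) are exactly the continuity of $g$ and of the flux $s^{d-1}a e^{-U/\eps}g'$. The divergence form above therefore holds globally in the weak sense.

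\emph{Mean hitting time.} Let $T(s):=\mathbb E_{s}\tau_g$ for $s\ge r_g$. It solves $\cL T=-1$ on $(r_g,\infty)$ with $T(r_g)=0$. The flux $s^{d-1}ae^{-U/\eps}T'$ should vanish as $s\to\infty$, since the confinement hypothesis (iv) and $s^{d-1}e^{-U/\eps}\in L^1$ make $\infty$ an entrance/natural boundary. Integrating twice gives, with $w(y):=a(y)e^{-U(y)/\eps}$,
\[
T(x)=\int_{r_g}^{x}\frac{1}{y^{d-1}w(y)}\int_y^\infty s^{d-1}e^{-U(s)/\eps}\,ds\,dy .
\]
I would justify this identification with the standard scale/speed-measure theory for one-dimensional diffusions, or by Dynkin's formula applied to $T$ with a localization $\tau_g\wedge\tau_R$ and $R\to\infty$. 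This also gives uniqueness.

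\emph{Overdamped Langevin.} Here $w=\eps e^{-U/\eps}$. The outer integrand on $[r_g,r_u]$ is $y^{1-d}\eps^{-1}e^{U(y)/\eps}\times(\text{inner})$. Its exponential maximum is at the saddle $y=r_s$, which is an interior nondegenerate maximum of $U$ on $[r_g,r_u]$. The inner integral $\int_{r_s}^\infty$ is dominated by the global minimum of $U$ on $[r_s,\infty)$. This minimum is at $r_u$, because $U$ is decreasing on $(r_s,r_u)$ and increasing afterwards. Two interior Gaussian Laplace evaluations give
\[
\frac{r_s^{1-d}}{\eps}\sqrt{\tfrac{2\pi\eps}{|U''(r_s)|}}\;e^{H/\eps}\cdot r_u^{d-1}\sqrt{\tfrac{2\pi\eps}{U''(r_u)}}\;e^{-F_u/\eps},
\]
which is the claimed asymptotics. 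I would make this rigorous by splitting the $y$-domain into a neighborhood of $r_s$ and its complement, where the integrand is exponentially smaller.

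\emph{Hybrid dynamics.} For $y<r$ we have $w\equiv\eps e^{-F_r/\eps}$. After exchanging the order of integration,
\[
T(r_u)=\frac{e^{F_r/\eps}}{\eps}\int_{r_g}^\infty s^{d-1}e^{-U(s)/\eps}\,\phi(s)\,ds,\qquad \phi(s):=\int_{r_g}^{\min(s,r_u)}y^{1-d}\,dy .
\]
The weight $e^{-U/\eps}$ on $[r_g,\infty)$ concentrates at $s=r_g$, but there $\phi$ vanishes linearly: $\phi(s)\approx r_g^{1-d}(s-r_g)$. The resulting one-sided Laplace integral is
\[
\int_0^\infty t\,e^{-U''(r_g)t^2/(2\eps)}\,dt=\frac{\eps}{U''(r_g)}.
\]
The factors $r_g^{d-1}r_g^{1-d}$ cancel, so the total is $e^{(F_r-F_g)/\eps}/U''(r_g)$. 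The competing contribution near $s=r_u$ is of order $e^{(F_r-F_u)/\eps}$. Contributions from $s>r$ are also smaller, because $U>F_g$ there. Both are exponentially negligible since $F_u>F_g$.

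I expect the main obstacle to be the first step rather than the asymptotics. One must rigorously identify $\mathbb E\tau_g$ for the sharp-interface hybrid process with the explicit formula. The coefficients are discontinuous at $|\bx|=r$, and the process is only defined through the limit object of Theorem~\ref{THM:Limit Equation}. The route I would try first is to work directly with the one-dimensional radial diffusion defined by its scale function and speed measure, which is well posed for merely measurable positive $w$. I would then check that its generator coincides with $\cL$, including the transmission conditions. The alternative would be to pass to the limit $\delta\to0$ in the analogous formula for the regularized coefficients, where the explicit integral is continuous in $w_\delta\to w$ by dominated convergence.
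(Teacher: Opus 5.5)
Your proposal is correct and follows essentially the same route as the paper's proof sketch. Both reduce to the radial divergence-form generator with conductance $s^{d-1}a(s)e^{-U(s)/\eps}$ and solve the Poisson problem explicitly via the Green's-function formula with zero flux at infinity. For overdamped Langevin both evaluate it by Laplace's method at the saddle $r_s$ and the upper well $r_u$. For the hybrid dynamics both use a Tonelli exchange followed by a one-sided Laplace integral at $r_g$, where the weight vanishes linearly.

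The one difference is that you explicitly flag, and sketch how to justify, the identification of $\mathbb E_{\bx_u}\tau_g$ for the sharp-interface process with the explicit formula, a step the paper takes for granted. For your $\delta\to0$ route, no limit is needed at the level of the formula: for $\delta<r-r_u$ one has $w_\delta=w$ on $[r_g,r_u]$, so the regularized expression for $T(r_u)$ coincides exactly with the sharp-interface one.
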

\begin{proof}[Proof sketch]
By radial symmetry, the mean hitting times depend only on the radial variable
$s=|\bx|$. We write
\[
    T_\eps^{\mathrm{OL}}(s):=\mathbb E_s\tau_g^{\mathrm{OL}},
    \qquad
    T_\eps^{\mathrm{HD}}(s):=\mathbb E_s\tau_g^{\mathrm{HD}},
\]
and set
\[
    \rho_\eps(s):=s^{d-1}e^{-U(s)/\eps}.
\]
For a radial test function $f(\bx)=\phi(|\bx|)$, the radial overdamped
Langevin generator is
\[
L_{\rm rad}^{\mathrm{OL}}\phi(s)
=
\eps\phi''(s)
+
\left(\frac{(d-1)\eps}{s}-U'(s)\right)\phi'(s)
=
\frac{1}{\rho_\eps(s)}
\frac{d}{ds}\left(A_\eps^{\mathrm{OL}}(s)\phi'(s)\right),
\]
where
\[
    A_\eps^{\mathrm{OL}}(s)=\eps\rho_\eps(s).
\]
For the hybrid dynamics,
\[
L_{\rm rad}^{\mathrm{HD}}\phi(s)
=
\frac{1}{\rho_\eps(s)}
\frac{d}{ds}\left(A_\eps^{\mathrm{HD}}(s)\phi'(s)\right), \quad\text{where}\quad   A_\eps^{\mathrm{HD}}(s)
=
\begin{cases}
\eps e^{-F_r/\eps}s^{d-1}, & s<r,\\[0.5ex]
\eps \rho_\eps(s), & s\ge r.
\end{cases}
\]

The functions $T_\eps^{\mathrm{OL}}$ and $T_\eps^{\mathrm{HD}}$ solve
\[
    -1=L_{\rm rad}T_\eps(s),\qquad s>r_g,
    \qquad
    T_\eps(r_g)=0,
\]
with the natural zero-flux condition $A_\eps(s)T_\eps'(s)\to0$ as $s\to\infty$.  For the hybrid dynamics, this Poisson problem is understood in the transmission sense at $s=r$: $T_\eps$ is continuous and $A_\eps T_\eps'$ is continuous across $r$.

After solving both equations, we obtain
\[
T_\eps^{\mathrm{OL}}(r_u)
=
\frac1\eps
\int_{r_g}^{r_u}
\frac{e^{U(y)/\eps}}{y^{d-1}}
\left(\int_y^\infty z^{d-1}e^{-U(z)/\eps}\,dz\right)dy
\]
and
\[
T_\eps^{\mathrm{HD}}(r_u)
=
\frac{e^{F_r/\eps}}{\eps}
\int_{r_g}^{r_u}
y^{-(d-1)}
\left(\int_y^\infty z^{d-1}e^{-U(z)/\eps}\,dz\right)dy .
\]
Note that since $r_u<r$, the Green-function formula for $T_\eps^{\mathrm{HD}}(r_u)$ only evaluates $A_\eps^{\mathrm{HD}}(y)$ for $y\in[r_g,r_u]$, where the first branch of the conductivity $A_\eps^{\mathrm{HD}}$ applies. The exterior region still enters through the tail integral $\int_y^\infty \rho_\eps(z)\,dz$.

We first consider the overdamped Langevin formula. Let
\[
    I_\eps(y):=\int_y^\infty z^{d-1}e^{-U(z)/\eps}\,dz .
\]
Let $r_*\in(r_g,r_s)$ be the unique point satisfying $U(r_*)=F_u$. On compact subintervals of $(r_*,r_u)$, the minimum of $U$ on $[y,\infty)$ is attained at the upper well $r_u$, and Laplace's method gives, uniformly away from the endpoints,
\[
    I_\eps(y)
    \sim
    r_u^{d-1}
    \sqrt{\frac{2\pi\eps}{U''(r_u)}}
    e^{-F_u/\eps}.
\]
Substituting this into the outer integral, the dominant contribution comes
from the nondegenerate saddle $r_s$, where $U(r_s)=H$ and
$U''(r_s)<0$. A second application of Laplace's method gives, over a neighborhood $[r_s-\sigma, r_s+\sigma]$ of the saddle,
\[
\int_{r_s-\sigma}^{r_s+\sigma}
\frac{e^{U(y)/\eps}}{y^{d-1}}\,dy
\sim
r_s^{-(d-1)}
\sqrt{\frac{2\pi\eps}{|U''(r_s)|}}
e^{H/\eps}.
\]
All other parts of the $y$-integral are exponentially smaller, or at most
polynomial in $\eps^{-1}$, by one-sided Laplace estimates near $r_g$ and by
the strict energy gap away from the saddle. Therefore
\[
T_\eps^{\mathrm{OL}}(r_u)
\sim
\frac{2\pi}{\sqrt{U''(r_u)|U''(r_s)|}}
\left(\frac{r_u}{r_s}\right)^{d-1}
\exp\!\left(\frac{H-F_u}{\eps}\right).
\]

For the hybrid dynamics, Tonelli's theorem rewrites the Green formula as
\[
T_\eps^{\mathrm{HD}}(r_u)
=
\frac{e^{F_r/\eps}}{\eps}
\int_{r_g}^{\infty}
z^{d-1}e^{-U(z)/\eps}
\left(\int_{r_g}^{z\wedge r_u}y^{-(d-1)}\,dy\right)dz .
\]
Define
\[
    J(z):=\int_{r_g}^{z\wedge r_u}y^{-(d-1)}\,dy .
\]
As $z\downarrow r_g$,
\[
    J(z)=r_g^{-(d-1)}(z-r_g)+\cO((z-r_g)^2),
    \qquad
    z^{d-1}=r_g^{d-1}+\cO(z-r_g),
\]
and hence
\[
    z^{d-1}J(z)=(z-r_g)+\cO((z-r_g)^2).
\]
Since $r_g$ is a nondegenerate minimum,
\[
    U(z)
    =
    F_g+\frac12U''(r_g)(z-r_g)^2+o((z-r_g)^2).
\]
Thus the leading contribution to the hybrid mean hitting time is
\[
\frac{e^{F_r/\eps}}{\eps}e^{-F_g/\eps}
\int_0^\infty
\eta\,
e^{-U''(r_g)\eta^2/(2\eps)}\,d\eta
=
\frac{1}{U''(r_g)}
\exp\!\left(\frac{F_r-F_g}{\eps}\right).
\]
The complement of any fixed neighborhood of $r_g$ is exponentially smaller,
because $r_g$ is the unique global minimum on $[r_g,\infty)$ and the
confinement assumption controls the polynomial factor. Hence
\[
T_\eps^{\mathrm{HD}}(r_u)
\sim
\frac{1}{U''(r_g)}
\exp\!\left(\frac{F_r-F_g}{\eps}\right).
\]
This proves the two claimed asymptotic formulas.
\end{proof}
 
\begin{corollary}
Under the assumptions of Theorem~\ref{thm:time},
\[
\lim_{\eps\to0}
\eps\log\!\left(
\frac{\mathbb E_{\bx_u}\tau_g^{\mathrm{HD}}}
{\mathbb E_{\bx_u}\tau_g^{\mathrm{OL}}}
\right)
=
(F_r-F_g)-(H-F_u)
=
F_r+F_u-F_g-H.
\]
In particular, if
\begin{equation}\label{eq:key_relation}
F_r-F_g<H-F_u,
\end{equation}
then there exists $\eta>0$ such that
\[
\frac{\mathbb E_{\bx_u}\tau_g^{\mathrm{HD}}}
{\mathbb E_{\bx_u}\tau_g^{\mathrm{OL}}}
\le
\exp\!\left(-\frac{\eta}{\eps}\right)
\]
for all sufficiently small $\eps>0$.
\end{corollary}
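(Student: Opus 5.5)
The corollary follows directly from the two sharp asymptotic formulas in Theorem~\ref{thm:time}. Write
\[
\mathbb E_{\bx_u}\tau_g^{\mathrm{OL}}=C_{\mathrm{OL}}\,e^{(H-F_u)/\eps}\bigl(1+o(1)\bigr),
\qquad
\mathbb E_{\bx_u}\tau_g^{\mathrm{HD}}=C_{\mathrm{HD}}\,e^{(F_r-F_g)/\eps}\bigl(1+o(1)\bigr),
\]
where $C_{\mathrm{OL}}=2\pi\,(r_u/r_s)^{d-1}/\sqrt{U''(r_u)|U''(r_s)|}$ and $C_{\mathrm{HD}}=1/U''(r_g)$. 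These prefactors are strictly positive, finite, and independent of $\eps$; this uses the nondegeneracy assumption (ii) and $0<r_g<r_s<r_u$. Taking the quotient gives
\[
\frac{\mathbb E_{\bx_u}\tau_g^{\mathrm{HD}}}{\mathbb E_{\bx_u}\tau_g^{\mathrm{OL}}}
=\frac{C_{\mathrm{HD}}}{C_{\mathrm{OL}}}\exp\!\Bigl(\frac{(F_r-F_g)-(H-F_u)}{\eps}\Bigr)\bigl(1+o(1)\bigr).
\]
Taking logarithms and multiplying by $\eps$ yields
\[
\eps\log\frac{\mathbb E_{\bx_u}\tau_g^{\mathrm{HD}}}{\mathbb E_{\bx_u}\tau_g^{\mathrm{OL}}}
=(F_r-F_g)-(H-F_u)+\eps\log\frac{C_{\mathrm{HD}}}{C_{\mathrm{OL}}}+\eps\log\bigl(1+o(1)\bigr).
\]
The last two terms tend to zero: the first is a constant times $\eps$, and $\log(1+o(1))\to0$. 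This proves the limit identity, and rearranging gives $F_r+F_u-F_g-H$.

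For the second claim, set $\Delta:=(H-F_u)-(F_r-F_g)$, which is positive under \eqref{eq:key_relation}, and take $\eta:=\Delta/2$. By the limit just proved, the quantity $\eps\log(\text{ratio})$ converges to $-\Delta$. Hence for all sufficiently small $\eps$ it is at most $-\Delta/2=-\eta$, and exponentiating gives the stated bound.

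The substantive input is entirely the asymptotic equivalences of Theorem~\ref{thm:time}; the corollary itself adds no new obstacle. The only point needing care is that "$\sim$" there means the ratio of the two sides tends to $1$ with $\eps$-independent positive prefactors. A weaker logarithmic equivalence, $\eps\log\mathbb E\tau\to$ barrier, would also suffice for both conclusions. If one were uneasy about the sharp prefactors in the proof sketch of Theorem~\ref{thm:time}, it would be enough to re-derive crude two-sided Laplace bounds of the form $e^{(\text{barrier}\pm o(1))/\eps}$ from the explicit Green-function formulas for $T_\eps^{\mathrm{OL}}(r_u)$ and $T_\eps^{\mathrm{HD}}(r_u)$.
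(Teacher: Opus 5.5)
Your proposal is correct and matches the paper, which states the corollary as an immediate consequence of the two sharp asymptotics in Theorem~\ref{thm:time}: taking the ratio and applying $\varepsilon\log$, the $\varepsilon$-independent positive prefactors and the $1+o(1)$ factors contribute only $o(1)$, and the exponential bound then follows with, e.g., $\eta=\tfrac12\bigl[(H-F_u)-(F_r-F_g)\bigr]$. Your remark that a logarithmic equivalence $\varepsilon\log\mathbb{E}\tau\to\text{barrier}$ would already suffice is also accurate.
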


\begin{remark}
For the overdamped Langevin dynamics, the metastable bottleneck is still the saddle at level $H$, so the relevant barrier is $H-F_u$. For the hybrid dynamics, once the drift has been removed in $\Omega_-$, the dominant cost is no longer the saddle-crossing cost but the cost of diffusing all the way to the lower minimum shell $\{|\bx|=r_g\}$, which produces the barrier $F_r-F_g$. Thus the hybrid dynamics is exponentially faster when~\eqref{eq:key_relation} holds.
\end{remark}

\begin{remark}
If the lower minimum is degenerate in the sense $U''(r_g)=0$, then one may replace the nondegeneracy assumption by the following: there exist an integer $m\ge 2$ and a constant $\kappa_g>0$ such that
\[
U(r_g+\eta)=F_g+\kappa_g \eta^{2m}+o(\eta^{2m})
\qquad (\eta\downarrow 0).
\]
Then the overdamped asymptotic is unchanged, as it is determined by the saddle at $r_s$.

For the hybrid dynamics, after the change of variables $\eta=z-r_g$,
\[
T_\eps^{\mathrm{HD}}(r_u)
\sim
\frac{e^{(F_r-F_g)/\eps}}{\eps}
\int_0^h \eta\,e^{-\kappa_g \eta^{2m}/\eps}\,d\eta
\]
for any fixed $h>0$ small enough, since the contribution of $[r_g+h,\infty)$ is exponentially smaller. With the substitution $t=\kappa_g \eta^{2m}/\eps$, one finds
\[
\int_0^\infty \eta\,e^{-\kappa_g \eta^{2m}/\eps}\,d\eta
=
\frac{1}{2m}\left(\frac{\eps}{\kappa_g}\right)^{1/m}
\int_0^\infty t^{1/m-1}e^{-t}\,dt
=
\frac{\Gamma(1/m)}{2m\,\kappa_g^{1/m}}\eps^{1/m}.
\]
Hence
\[
\mathbb E_{\bx_u}\tau_g^{\mathrm{HD}}
=
T_\eps^{\mathrm{HD}}(r_u)
\sim
\frac{\Gamma(1/m)}{2m\,\kappa_g^{1/m}}
\eps^{1/m-1}
\exp\!\left(\frac{F_r-F_g}{\eps}\right).
\]

Thus the exponent for the hybrid dynamics remains $F_r-F_g$, but the prefactor changes from a constant to a power of $\eps$. When  $m=1$, one recovers the case in Theorem~\ref{thm:time}.
\end{remark}

\section{Numerical simulations}
\label{SEC:Num}

We now present numerical simulations illustrating two features of the proposed hybrid dynamics. First, we verify empirically that the hybrid and regularized hybrid dynamics sample the desired Gibbs distribution. Second, we illustrate the accelerated redistribution of probability mass across metastable regions predicted by the mean-exit-time analysis of Section~\ref{SEC:Exit}.

We are interested in comparing results from the overdamped Langevin dynamics (OL), the hybrid dynamics~\eqref{EQ:SDE HB} (HD), and the regularized hybrid dynamics with regularization parameter $\delta$~\eqref{EQ:Coefficients Regularized} ($\delta$-HD). We discretize the continuous systems using the standard Euler-Maruyama scheme. In particular, with the time stepsize $\eta$, the overdamped Langevin becomes
\[
X_{n+1}=X_n-\eta \nabla F(X_n)+\sqrt{2\eta\eps}\,\xi_n,
\qquad \xi_n\sim \cN(\bzero,\bI).
\]
For the $\delta$-regularized hybrid dynamics, we have
\[
    X_{n+1}=X_n + \eta b_\delta(X_n)
+\sqrt{2\eta a_\delta(X_n)}\,\xi_n\,. 
\]
The limiting hybrid dynamics gives the piecewise Euler-Maruyama update
\[
X_{n+1}=
\begin{cases}
X_n+\sqrt{2\eta\,\eps\exp\left((F(X_n)-F_0)/\eps\right)}\,\xi_n,
& X_n \in\Omega_-,\\[1ex]
X_n-\eta \nabla F(X_n)+\sqrt{2\eta\eps}\,\xi_n,
& X_n \in\Omega_+,
\end{cases}
\]
where $F_0$ is the chosen value that determines the interface $\Gamma$.

\subsection{1D sampling examples}
\label{subsec:regularized-hd-1d}

We first show some one-dimensional simulations.
\begin{figure}[!htb]
    \centering
    \includegraphics[width=0.62\linewidth]{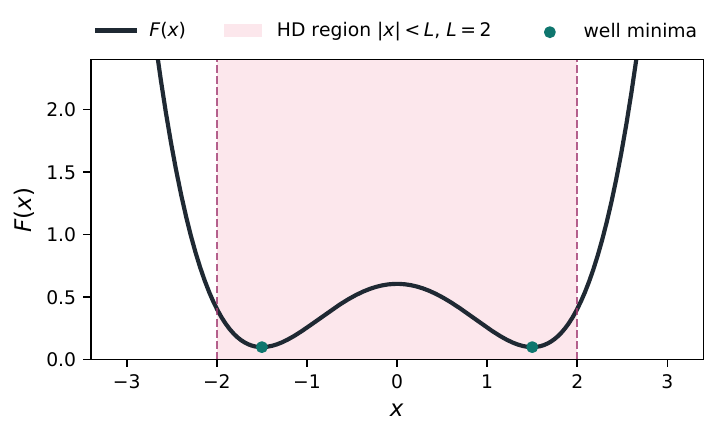}
    \caption{Double-well potential used in Numerical Example I. The shaded interval indicates the HD switching region $\Omega_-:\{x\in\bbR \mid |x|<L\}$, with
    $L=2$.  For the regularized HD methods, $\chi_\delta=1$ on
    $|x|\le L-\delta$ and decays smoothly to zero across the collar
    $L-\delta<|x|<L$.}
    \label{fig:regularized-hd-1d-potential}
\end{figure}

\paragraph{Example I (localized initialization).} In the first numerical example, we consider the
one-dimensional double-well potential (see Figure~\ref{fig:regularized-hd-1d-potential}):
\[
F(x)=\frac{1}{10}\bigl(x^2-c^2\bigr)^2+0.1,
\qquad c=1.5.
\]
The corresponding Gibbs equilibrium, for a given temperature parameter $\eps$, is
\[
\pi_G(x)=Z^{-1}\exp\{-F(x)/\eps\}\,. 
\]
We take a small $\eps=0.08$ to make the sampling challenging. The interior region for the derivative-free dynamics is taken to be $\Omega_-:=\{x\in\bbR \mid |x|<2\}$. The switching interface consists of the two outer points $x=\pm L$ with $L=2$. These points lie on the level $F_0=F(\pm L)=0.40625$. Thus this example uses the outermost-component convention: the hybridized region is the interval enclosed by the outermost level-set points.
  
For the coefficient regularization scheme~\eqref{EQ:Coefficients Regularized}, we take
\[
    S(t)=10t^3-15t^4+6t^5,\qquad 0\le t\le 1,
\]
and define the $C^2$ cutoff
\[
\chi_\delta(x)=
\begin{cases}
1, & |x|\le L-\delta,\\
1-S\!\left(\dfrac{|x|-(L-\delta)}{\delta}\right),
& L-\delta<|x|<L,\\
0, & |x|\ge L.
\end{cases}
\]
We test $\delta\in\{0.02,0.05,0.10,0.20\}$. The limiting HD method corresponds to the discontinuous-interface limit $\delta=0$.

The simulations are ``cold-started": they are initialized from a very concentrated distribution $\cN(3,0.01^2)$. All methods use the same time step $\eta=10^{-3}$, are initialized from $N(3,0.01^2)$, and are evolved up to $T=2000$. For each method, we use $4500$ trajectories.  The reported KL and $\chi^2$ divergences are computed from normalized histogram bin masses
against the normalized grid approximation of $\pi_G$, using histogram spacing $\Delta x=0.05$.  The convergence curves use pooled histograms over windows of $50$ saved frames, corresponding to a physical window length of $50$ time units.
\begin{figure}[!htb]
    \centering
    \includegraphics[width=0.72\linewidth]{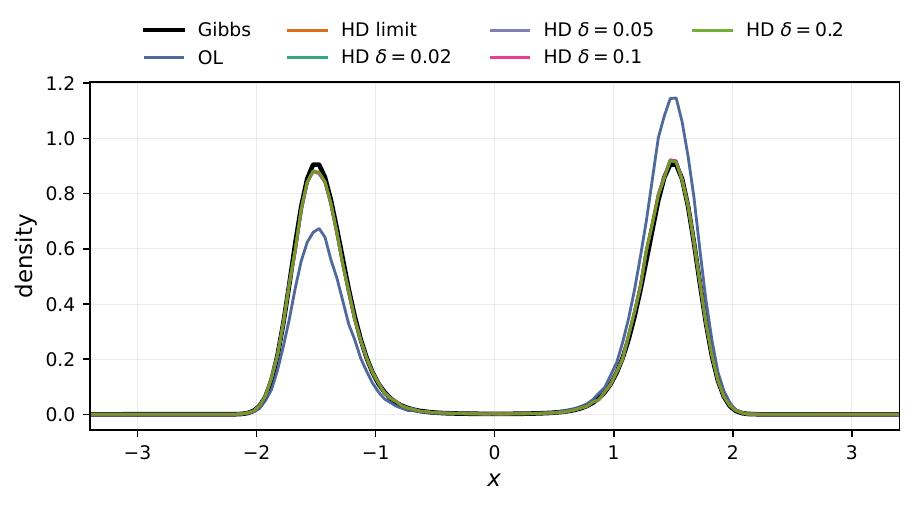}
    \caption{Final pooled empirical densities compared with the Gibbs density in Example I. The HD and regularized HD variants match the target density closely, while OL retains a visible residual bias at $T=2000$.}
    \label{fig:regularized-hd-1d-densities}
\end{figure}

We show in Figure~\ref{fig:regularized-hd-1d-densities} the empirical densities given by the different schemes as well as the corresponding Gibbs density. At the final time of the simulation, the HD and $\delta$-HD produced the densities that are closer to the Gibbs than the overdamped Langevin. Figure~\ref{fig:regularized-hd-1d-convergence}
shows the KL convergence history.  The regularized HD curves are nearly indistinguishable from the limiting HD curve at this time step and sample size.  After a short initial transient, the HD curves overtake OL around $t=83$ and remain below OL for the rest of the simulation. In particular, all HD variants reach $\mathrm{KL}\le 10^{-2}$ by time approximately $810$, while OL does not reach this accuracy by the final time $T=2000$.  At the final time, OL has KL divergence $3.53\times 10^{-2}$, whereas the HD variants have final KL divergences between $3.88\times 10^{-4}$ and $4.51\times 10^{-4}$.
\begin{figure}[!htb]
    \centering
    \includegraphics[width=0.68\linewidth]{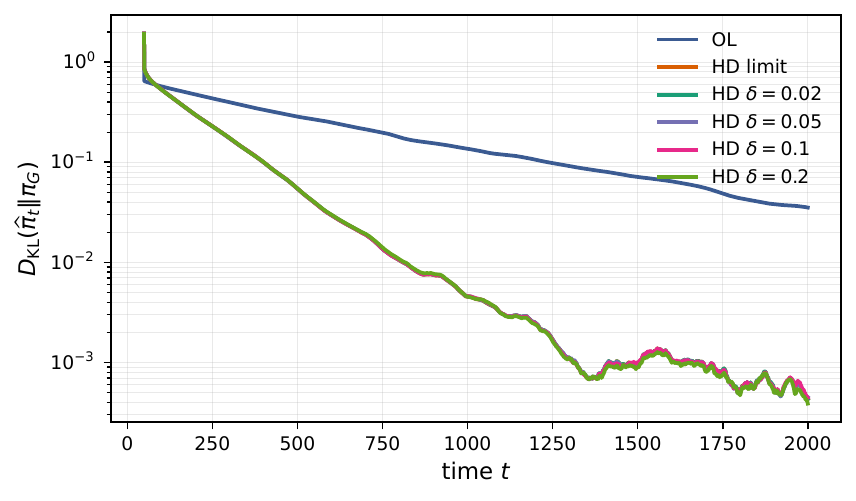}
    \caption{KL convergence histories for OL, limiting HD, and regularized HD
    with $\delta\in\{0.02,0.05,0.10,0.20\}$ in Example I.  The regularized HD curves
    closely track the limiting HD curve and remain uniformly below OL after
    the short initial transient.}
    \label{fig:regularized-hd-1d-convergence}
\end{figure}

\begin{table}[!htb]
    \centering
    \begin{tabular}{lccccc}
        \toprule
        Method & Final KL & Final $\chi^2$
        & $t_{\mathrm{KL}\le 10^{-1}}$
        & $t_{\mathrm{KL}\le 5\cdot10^{-2}}$
        & $t_{\mathrm{KL}\le 10^{-2}}$ \\
        \midrule
        OL & $0.0353$ & $0.0698$ & $1237$ & $1741$ & -- \\
        HD limit & $0.000430$ & $0.000954$ & $401$ & $513$ & $810$ \\
        HD, $\delta=0.02$ & $0.000429$ & $0.000951$ & $402$ & $513$ & $811$ \\
        HD, $\delta=0.05$ & $0.000451$ & $0.000997$ & $401$ & $513$ & $812$ \\
        HD, $\delta=0.10$ & $0.000444$ & $0.000979$ & $402$ & $513$ & $811$ \\
        HD, $\delta=0.20$ & $0.000388$ & $0.000869$ & $402$ & $514$ & $817$ \\
        \bottomrule
    \end{tabular}
    \caption{Benchmark summary for OL, limiting HD, and $C^2$-regularized HD in Example I.
    }
    \label{tab:regularized-hd-1d-benchmark}
\end{table}
The comparison across $\delta$, in both Figure~\ref{fig:regularized-hd-1d-convergence} and Table~\ref{tab:regularized-hd-1d-benchmark}, indicates that the observed acceleration persists under $C^2$ regularization of the HD interface. The regularized dynamics replace the abrupt switch by a thin transition layer in which both $a_\delta$ and the correction drift $b_\delta$ vary smoothly, but the observed convergence is essentially unchanged for the tested widths. This indicates that the main effect comes from flattening the weighted conductance $a(\bx)\pi(\bx)$ inside the hybridized region. The drift-free interior dynamics removes the usual energetic bottleneck in the Dirichlet form, while the exterior OL dynamics preserves confinement and pulls trajectories back toward the target region.
The small differences among the $\delta$-curves are comparable to the Monte Carlo resolution of the pooled histogram estimator. In this example, regularization improves the smoothness of the numerical dynamics without degrading the sampling advantage of HD over OL.

We also tested the fully derivative-free dynamics obtained by applying the interior HD diffusion coefficient on the entire real line,
\[
X_n+\sqrt{2\eta\,\eps\exp\left((F(X_n)-F_0)/\eps\right)}\,\xi_n.
\]
This experiment is not included as a density plot because it is numerically unstable at the same step size and initialization used above.  Starting from $N(3,0.01^2)$, all $4500$ trajectories leave the plotting window $[-4,4]$ after the a few time steps.  Thus the pure derivative-free dynamics does not provide a meaningful sampling baseline for this setup.  The observation instead reinforces the role of the hybrid switch: large diffusion is useful inside the low-energy region, but outside this region the OL part is needed to provide confinement.
\begin{figure}[!htb]
    \centering
    \includegraphics[width=0.6\linewidth]{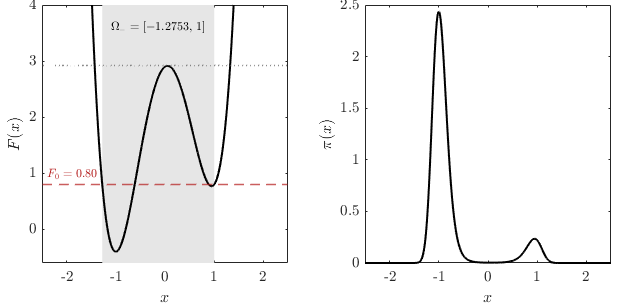} 
    \caption{The asymmetric double potential $F$ and the corresponding Gibbs distribution (with $\eps=0.5$) in Example II.}
    \label{FIG:1D Exp2 Setup}
\end{figure}

\paragraph{Example II (uniform initialization).} In the second numerical example, we consider the asymmetric potential given by (see Figure~\ref{FIG:1D Exp2 Setup}):
\[
    F(x)=3(x^2-1)^2+0.3(x+1)^2-0.4\,.
\]
We choose the interior region for the derivative-free dynamics as $\Omega_-:=(-1.2753, 1)$. This means that $F_0=0.8$ for the potential. Note that $\Omega_-$ is not symmetric with respect to $x=0$ due to the non-symmetry of $F$.
\begin{figure}[!htb]
    \centering
    \includegraphics[width=0.98\linewidth]{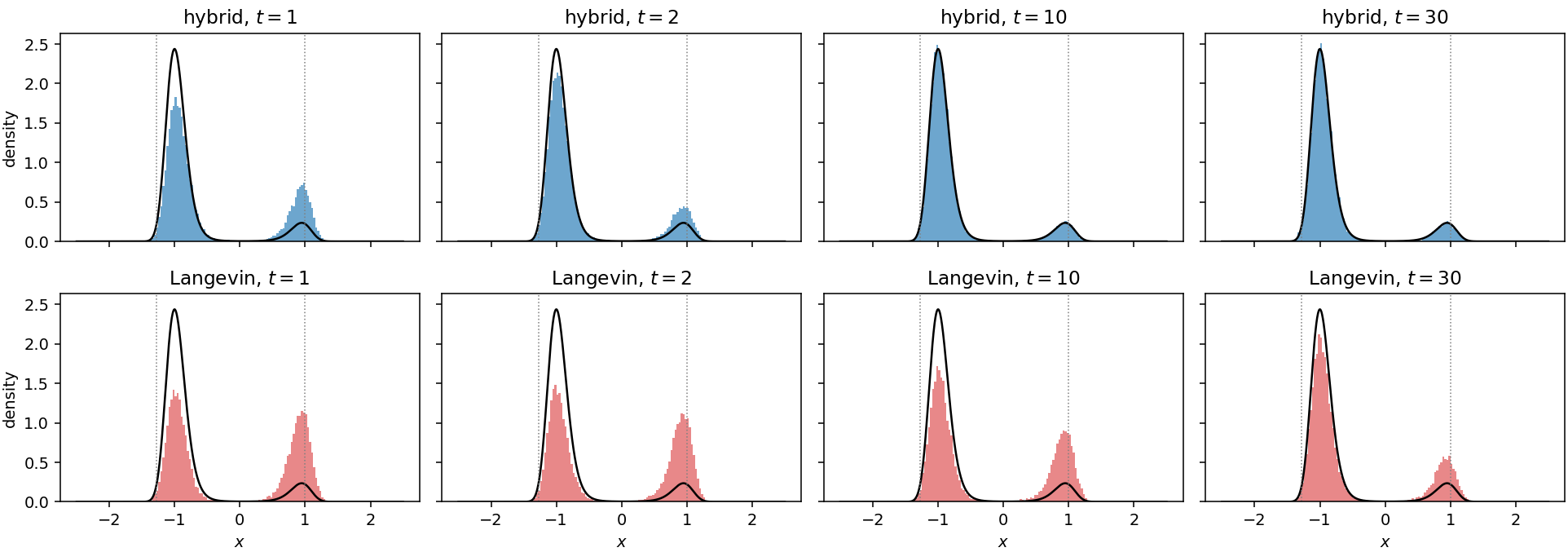} 
    \caption{Snapshots of the empirical densities given by the hybrid dynamics and the overdamped Langevin at $t=1, 2, 10, 30$ (against the Gibbs density) in Example II.}
    \label{FIG:1D Exp2 Snapshots}
\end{figure}

We start the simulations from a uniform density $\cU([-2.5, 2.5])$. We compare here only the hybrid dynamics and the overdamped Langevin dynamics. Figure~\ref{FIG:1D Exp2 Snapshots} shows the comparison of the empirical snapshots of the densities $\wh\rho^N(t,\cdot)$ at $t=1, 2, 10$ and $30$ from $N=2\times 10^3$ trajectories. Both samplers reproduce the two-peaked structure of $\pi$ fairly well by the time $t=30$. However, we see a clear advantage of the hybrid dynamics in this case from the evolution of the snapshots and the $\KL$ divergence $\KL(\wh\rho^N(t,\cdot)\|\pi)$ in Figure~\ref{FIG:1D Exp2 KL}. The hybrid dynamics quickly redistribute the extra mass from the smaller peak at $x=1$ to the larger peak at $x=-1$, resulting in faster convergence than the overdamped Langevin dynamics.

\begin{figure}[!htb]
    \centering   \includegraphics[width=0.5\linewidth,height=0.34\textwidth]{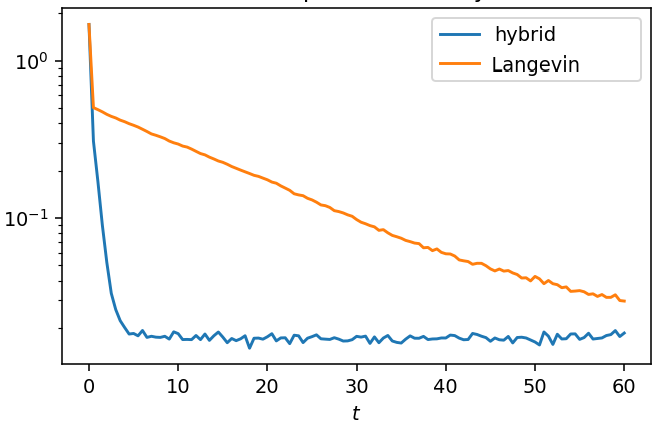} 
    \caption{Evolution of $\KL$ divergence for hybrid dynamics and overdamped Langevin in Example II.}
    \label{FIG:1D Exp2 KL}
\end{figure}

\subsection{2D sampling examples}
\label{subsec:2d-radial-benchmark}

We next consider a two-dimensional example designed to test whether HD can rapidly redistribute probability mass among several metastable radial wells.

\paragraph{Example III (radially symmetric potential).} Let $r=|\bx|$ and $q=r^2=|\bx|^2$.  We define
\[
\theta(q)=6\pi(q-0.04),
\]
and
\[
G(q)
=0.90\left(1-\cos\theta(q)\right)
+0.45\left(1-\cos(2\theta(q))\right)
+0.35\sin^2\theta(q)
+0.18\sin(3\theta(q)).
\]
With
$\delta=0.08$, set
\[
\psi_\delta(s)=
\begin{cases}
0, & s\le 0,\\[0.5ex]
\delta^2\left(t^5-3t^4+3t^3\right),
\qquad t=s/\delta, & 0<s<\delta,\\[0.5ex]
s^2, & s\ge \delta.
\end{cases}
\]
The radial profile is
\[
V_{\rm rad}(r)=G(r^2)+80\,\psi_\delta(r-1),
\]
and the two-dimensional potential is
\[
V(\bx)=V_{\rm rad}(|\bx|)
=G(|\bx|^2)+80\,\psi_\delta(|\bx|-1).
\]
Thus $V\in \cC^2(\mathbb R^2)$, has several oscillatory radial wells in the unit disk, and grows quadratically for $|\bx|\ge 1+\delta$.  The target Gibbs distribution is
\[
\pi_\eps(\bx)=Z_\eps^{-1}\exp(-V(\bx)/\eps),
\qquad \eps=0.35 .
\]
Figure~\ref{fig:2d-radial-setup} shows the radial profile, the HD level-set region, and the Gibbs density.
\begin{figure}[!htb]
\centering
\includegraphics[width=\textwidth]{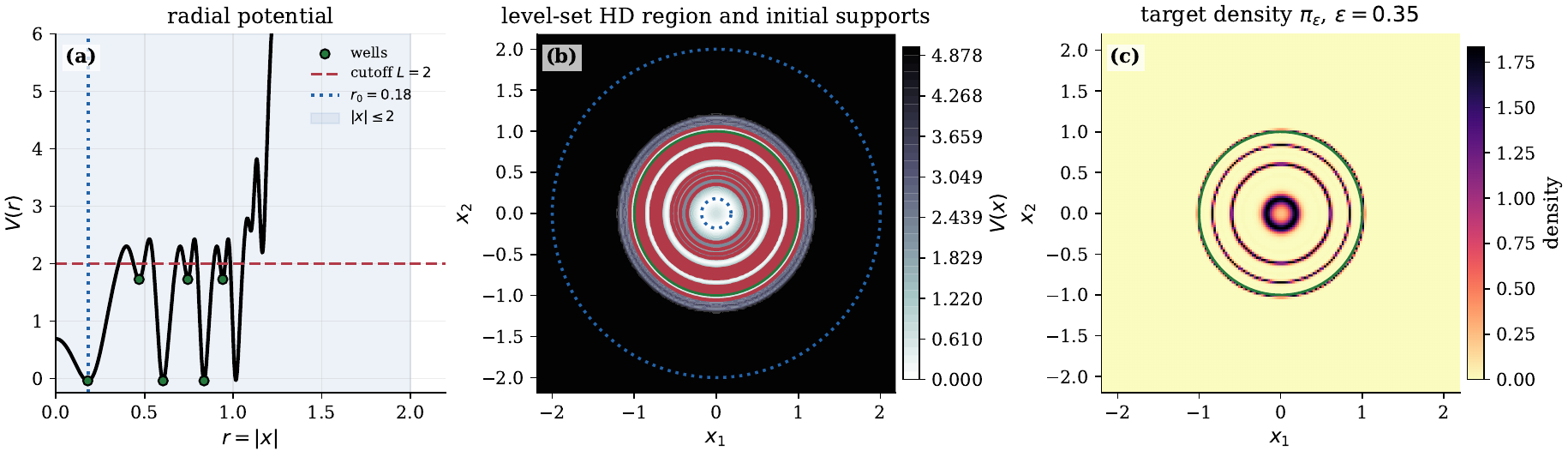}
\caption{Setup for Example III. Panel (a) shows the radial potential $V(r)$, the HD cutoff $F_0=2$, the near-well initial radius $r_0=0.18$ used in $\mu_0^{\mathrm{well}}$, and the support radius of the uniform-disk initialization used in $\mu_0^{\mathrm{disk}}$. Panel (b) shows the level-set region $V(\bx)<2$ where HD uses diffusion-only dynamics.  Panel (c) shows the density of the target Gibbs distribution $\pi_\eps\propto\exp(-V/\eps)$, with $\eps=0.35$.}
\label{fig:2d-radial-setup}
\end{figure}

Here the HD interior diffusion coefficient is $D(\bx)=\eps\exp((V(\bx)-F_0)/\eps)$, so that the hybrid update uses increment $\sqrt{2\eta D(X_n)}\xi_n$ on $\{V<F_0\}$ and the OL update on $\{V\ge F_0\}$. In all experiments below $F_0=2$, so $D(\bx)=\eps$ continuously on the interface $V(\bx)=F_0$.  Both OL and HD methods use step size $\eta=10^{-3}$, final time $T=200$, and $16000$ particles per run.  Reported $\KL$ values are computed from normalized histogram cell masses against the normalized grid approximation of $\pi_\eps$. We use six independent runs and also report the KL of the pooled histogram.
\begin{table}[!htb]
\centering
\begin{tabular}{llcccc}
\toprule
initial law & method & $t=40$ & $t=80$ & $t=120$ & $t=200$ \\
\midrule
$\mu_0^{\mathrm{well}}$ & OL & 1.210 & 1.216 & 1.213 & 1.211 \\
$\mu_0^{\mathrm{well}}$ & HD, $F_0=2$ & 0.141 & 0.0813 & 0.0737 & 0.0737 \\
$\mu_0^{\mathrm{disk}}$ & OL & 1.211 & 1.217 & 1.216 & 1.211 \\
$\mu_0^{\mathrm{disk}}$ & HD, $F_0=2$ & 0.0760 & 0.0748 & 0.0730 & 0.0744 \\
\bottomrule
\end{tabular}
\caption{KL divergence
$D_{\mathrm{KL}}(\widehat\pi_t\Vert\pi_\eps)$ for the two initial distributions in Example III.
HD rapidly approaches the Gibbs radial shell structure, while OL remains localized near the inner well for both initial distributions.}
\label{tab:2d-radial-kl}
\end{table}

We test two initial distributions:
\[
\mu_0^{\mathrm{well}}:\quad
X_0=0.18(\cos\Theta,\sin\Theta)+0.02Z,
\qquad
\Theta\sim\mathrm{Unif}(0,2\pi),\quad Z\sim N(0,I_2),
\]
and
\[
\mu_0^{\mathrm{disk}}:\quad
X_0\sim \mathrm{Unif}\{\bx\in\mathbb R^2:\ |\bx|\le 2\}.
\]
The first initial distribution is concentrated near the innermost well, while the second one places mass well outside the target region.

\begin{figure}[!htb]
\centering
\includegraphics[width=\textwidth]{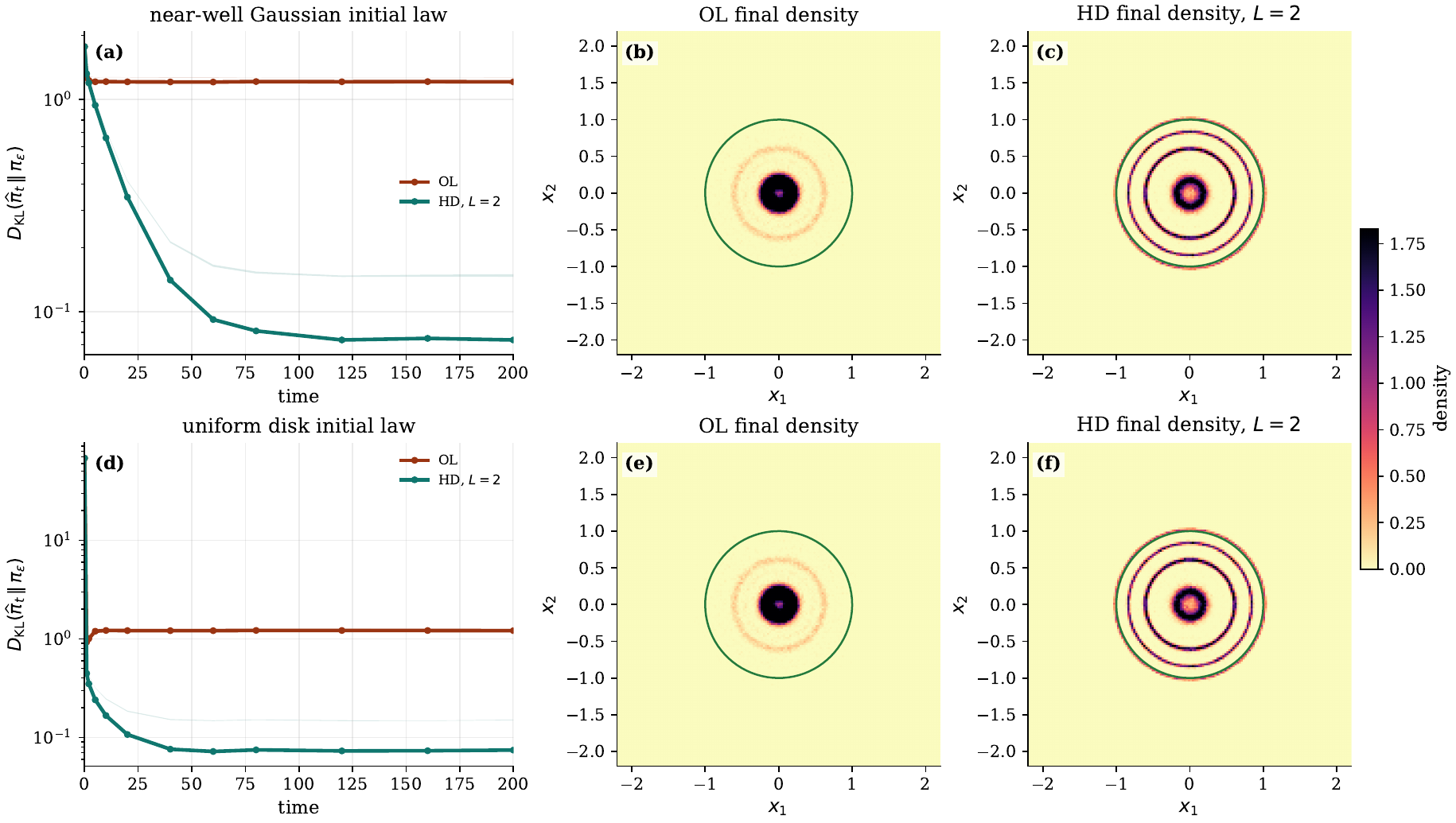}
\caption{Sampling comparison for two initial distributions in Example III.  Panels (a)--(c) use the near-well Gaussian initial distribution $\mu_0^{\mathrm{well}}$, and panels (d)--(f) use the broad uniform-disk initial distribution $\mu_0^{\mathrm{disk}}$. The left column shows pooled KL convergence curves, with shaded bands indicating one standard error across six runs.  The middle and right columns show the final pooled empirical densities for OL and HD, respectively.}
\label{fig:2d-radial-results}
\end{figure}
The results are shown in Figure~\ref{fig:2d-radial-results} and summarized in
Table~\ref{tab:2d-radial-kl}.  For the near-well initial distribution $\mu_0^{\mathrm{well}}$, HD reduces the pooled KL divergence from order one to $7.37\times10^{-2}$ by final time, while OL remains near $1.21$.  For the broad uniform-disk initialization $\mu_0^{\mathrm{disk}}$, HD reaches essentially the same accuracy: the best recorded pooled KL is $7.21\times10^{-2}$ at $t=60$, and the final pooled KL is $7.44\times10^{-2}$.  In contrast, OL again remains near $1.21$.  The final density plots show that OL quickly contracts mass toward the inner well and ignore the outer target rings, whereas HD equilibrates the
radial shell masses much more effectively.


\paragraph{Example IV (anisotropic potential).} We next deform the radial example into a genuinely anisotropic two-dimensional example by replacing the Euclidean radius with an elliptic radius. This tests whether HD still rapidly redistributes probability mass when the metastable wells are organized along elliptic, rather than circular, level sets. Define 
\[ s(\bx)=\sqrt{\left(\frac{x_1}{1.45}\right)^2+ \left(\frac{x_2}{0.75}\right)^2}, \qquad \bx=(x_1,x_2)\in\mathbb R^2, 
\] 
and set $g(\bx)=s(\bx)^2$. Using the same oscillatory profile $G$ and the same $C^2$ confining regularization $\psi_\delta$ from Example~III, with $\delta=0.08$, we define \[ V(\bx)=G(g(\bx))+80\,\psi_\delta(s(\bx)-1). \] Thus $V\in\mathcal C^2(\mathbb R^2)$, has several oscillatory wells along elliptic shells, and grows quadratically as $s(\bx)\to\infty$. The target Gibbs distribution is 
\[ \pi_\eps(\bx)=Z_\eps^{-1}\exp(-V(\bx)/\eps), \qquad \eps=0.35 . 
\]
Figure~\ref{fig:2d-elliptic-setup} shows the elliptic-radius profile, the HD level-set region, and the Gibbs density.
\begin{figure}[!htb]
\centering
\includegraphics[width=\textwidth]{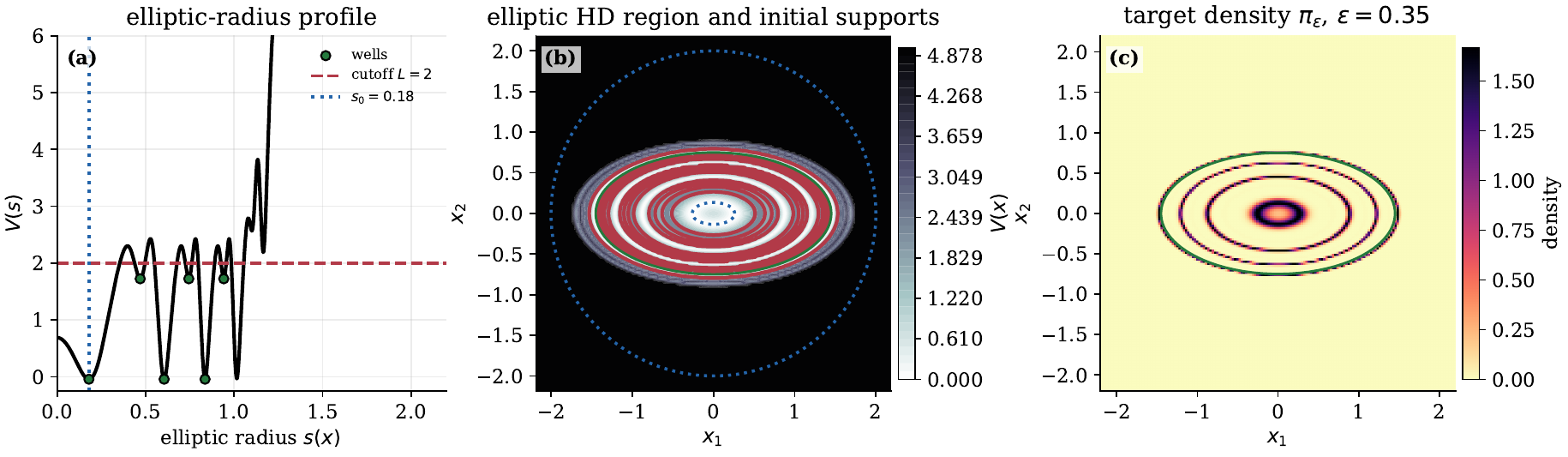}
\caption{Setup for Example IV. Panel (a) shows the potential profile as a function of the elliptic radius $s(\bx)$, the HD cutoff $F_0=2$, and the near-well elliptic initialization radius $s_0=0.18$.  Panel (b) shows the sublevel region $V(\bx)<2$, where HD uses diffusion-only dynamics.  Panel (c) shows the density of the target Gibbs distribution $\pi_\eps\propto\exp(-V/\eps)$, with $\eps=0.35$.}
\label{fig:2d-elliptic-setup}
\end{figure}

We use the same OL and HD discretizations as in Section~\ref{subsec:2d-radial-benchmark}. In all experiments below $F_0=2$, so
\[
D(\bx)=\eps\exp((V(\bx)-F_0)/\eps)
\]
satisfies $D(\bx)=\eps$ continuously on the interface $V(\bx)=F_0$.  Both methods use step size $\eta=10^{-3}$, final time $T=200$, and $16000$ particles per run.  $\KL$ values are computed from normalized histogram cell masses against the normalized grid approximation of $\pi_\eps$. We use six independent runs and report the $\KL$ of the pooled histogram.

We test two initial distributions:
\[
\mu_0^{\mathrm{ell}}:\quad
X_0=\bigl(1.45\cdot 0.18\cos\Theta,\;0.75\cdot 0.18\sin\Theta\bigr)+0.02Z,
\qquad
\Theta\sim\mathrm{Unif}(0,2\pi),\quad Z\sim N(0,I_2),
\]
and
\[
\mu_0^{\mathrm{disk}}:\quad
X_0\sim \mathrm{Unif}\{\bx\in\mathbb R^2:\ |\bx|\le 2\}.
\]
The first initial distribution is concentrated near the innermost elliptic well, while the second one places mass well outside the target region.
\begin{table}[!htb]
\centering
\begin{tabular}{llcccc}
\toprule
initial law & method & $t=40$ & $t=80$ & $t=120$ & $t=200$ \\
\midrule
$\mu_0^{\mathrm{ell}}$ & OL & 1.271 & 1.271 & 1.273 & 1.270 \\
$\mu_0^{\mathrm{ell}}$ & HD, $F_0=2$ & 0.231 & 0.129 & 0.115 & 0.109 \\
$\mu_0^{\mathrm{disk}}$ & OL & 1.271 & 1.273 & 1.274 & 1.273 \\
$\mu_0^{\mathrm{disk}}$ & HD, $F_0=2$ & 0.114 & 0.113 & 0.110 & 0.109 \\
\bottomrule
\end{tabular}
\caption{KL divergence
$D_{\mathrm{KL}}(\widehat\pi_t\Vert\pi_\eps)$ for Example IV. HD rapidly approaches the Gibbs elliptic shell structure, while OL remains
localized near the inner well for both initial distributions.}
\label{tab:2d-elliptic-kl}
\end{table}

\begin{figure}[!htb]
\centering
\includegraphics[width=\textwidth]{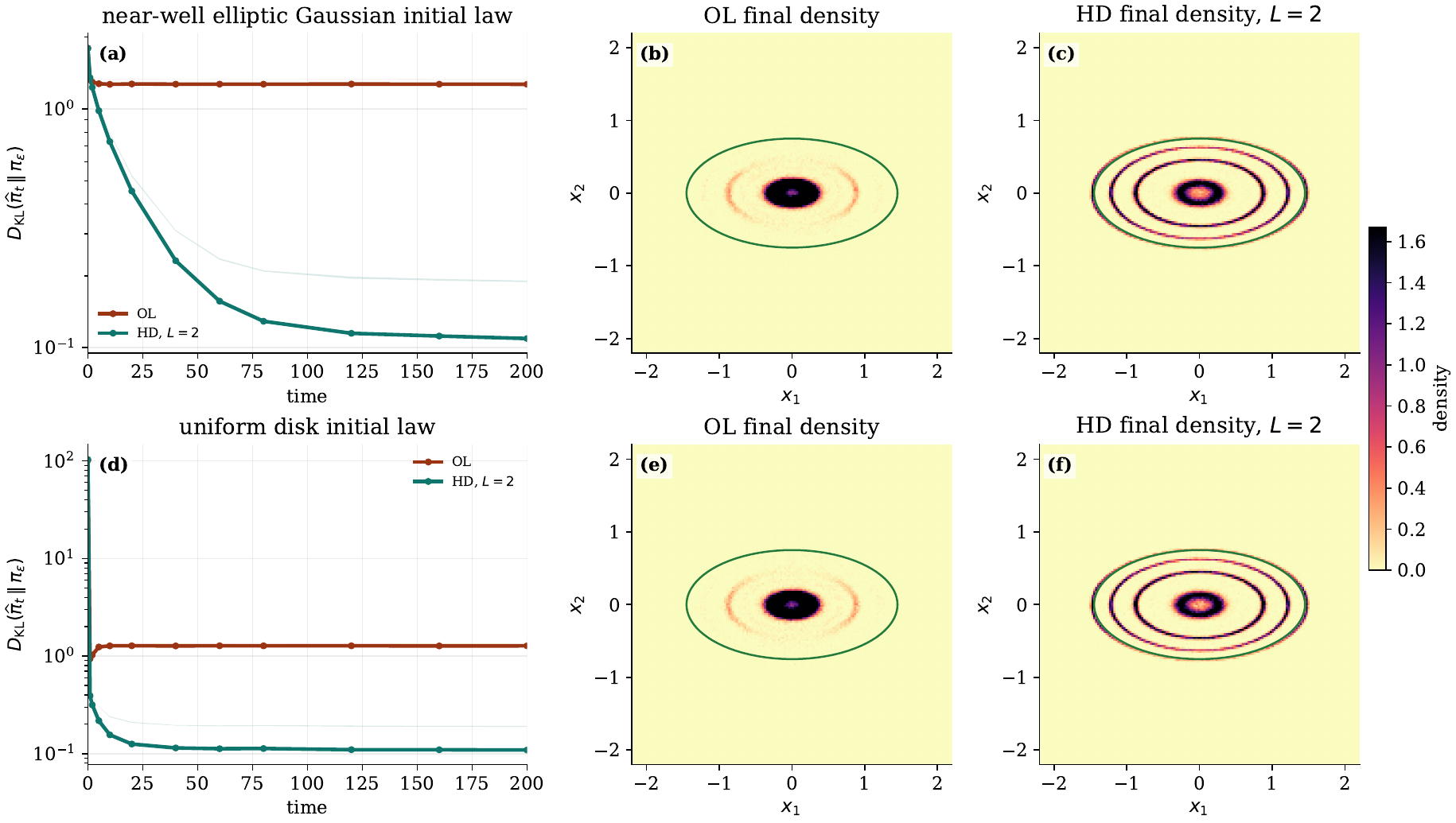}
\caption{Sampling comparison for two initial distributions in Example IV.  Panels (a)--(c) use the near-well elliptic Gaussian initial distribution $\mu_0^{\mathrm{ell}}$; panels (d)--(f) use the broad uniform-disk initial distribution $\mu_0^{\mathrm{disk}}$.  The left column shows pooled KL convergence curves, with shaded bands indicating one standard error across six runs.  The middle and right columns show the final pooled empirical densities for OL and HD, respectively.}
\label{fig:2d-elliptic-results}
\end{figure}
The results are shown in Figure~\ref{fig:2d-elliptic-results} and summarized in Table~\ref{tab:2d-elliptic-kl}. For the near-well-elliptic initialization $\mu_0^{\mathrm{ell}}$, HD reduces the pooled KL divergence to $1.09\times 10^{-1}$ by the final time, while OL remains near $1.27$. For the broad uniform-disk initialization $\mu_0^{\mathrm{disk}}$, HD again achieves essentially the same accuracy, with a final pooled KL of $1.09\times 10^{-1}$, whereas OL remains near $1.27$. The final density plots show the same behaviors of the two dynamics as in the radial example: OL moves mass toward the innermost well and fails to put enough mass on the outer elliptic shells. At the same time, HD equilibrates mass across all shell structures.

\section{Concluding remarks}
\label{SEC:Concl}

This work introduces a hybrid dynamics framework for sampling from Gibbs distributions by combining two distinct stochastic dynamics in different regions of the state space. The construction is designed so that the resulting process preserves the target Gibbs distribution while modifying the dynamics' effective geometry in regions where metastability is expected to dominate.

We developed a rigorous analysis of the hybrid dynamics through a regularization approach. We showed that the hybrid dynamics converges exponentially fast to the Gibbs distribution. We also analyzed the metastability properties of the hybrid dynamics in a radially symmetric landscape. We showed that the transition mechanism between metastable states is fundamentally altered. While overdamped Langevin dynamics is governed by the saddle barrier, the hybrid dynamics replaces it with the interface's switching level. As a consequence, an appropriate choice of the switching surface leads to exponentially faster transition times, demonstrating a clear advantage of the hybrid construction in multimodal landscapes.

A central question for the future is how to choose the switching interface in a systematic and possibly adaptive manner. In complex high-dimensional landscapes, it would be desirable to identify regions where the hybridization yields the greatest improvement, potentially using data-driven or learning-based approaches. Also, the present framework combines overdamped Langevin dynamics with a derivative-free diffusion. It would be of interest to extend the hybrid approach to incorporate other dynamics, such as underdamped Langevin processes, preconditioned diffusions, or nonreversible dynamics, and to understand how these choices affect both convergence rates and metastability.



\section*{Data availability statements}
Data sets generated during the current study are available from the corresponding author on reasonable request.


\section*{Declarations}
The authors declare no competing interests. This work is partially supported by the National Science Foundation through grants DMS-2208504 (BE), DMS-1937254 (KR), DMS-2309802 (KR), and DMS-2409855 (YY).  YY~was also partially supported by Office of Naval Research through grant N00014-24-1-2088.

{\small

\begin{thebibliography}{10}

\bibitem{AmGiSa-Book08}
{\sc L.~Ambrosio, N.~Gigli, and G.~Savar{\'e}}, {\em Gradient flows: in metric
  spaces and in the space of probability measures}, Springer Science \&
  Business Media, second~ed., 2008.

\bibitem{AnLi-AOS21}
{\sc C.~Andrieu and S.~Livingstone}, {\em Peskun--{T}ierney ordering for
  {M}arkovian {M}onte {C}arlo: beyond the reversible scenario}, Ann. Statist.,
  49 (2021), pp.~1958--1981.

\bibitem{ArMaToUn-CPDE01}
{\sc A.~Arnold, P.~Markowich, G.~Toscani, and A.~Unterreiter}, {\em On convex
  {Sobolev} inequalities and the rate of convergence to equilibrium for
  {Fokker-Planck} type equations}, Commun. PDEs, 26 (2001), pp.~43--100.

\bibitem{Ar-BAMS-67}
{\sc D.~G. Aronson}, {\em Bounds for the fundamental solution of a parabolic
  equation}, Bull. Amer. Math. Soc., 73 (1967), pp.~890--896.

\bibitem{BaEm-SP85}
{\sc D.~Bakry and M.~\'Emery}, {\em Diffusions hypercontractives}, in
  S\'eminaire de Probabilit\'es, XIX, 1983/84, vol.~1123 of Lecture Notes in
  Mathematics, Springer, 1985, pp.~177--206.

\bibitem{BaGeLe-Book14}
{\sc D.~Bakry, I.~Gentil, and M.~Ledoux}, {\em Analysis and Geometry of
  {Markov} Diffusion Operators}, Springer, 2014.

\bibitem{BeHeDoJa-arXiv19}
{\sc E.~Bernton, J.~Heng, A.~Doucet, and P.~E. Jacob}, {\em {S}chr{\"o}dinger
  bridge samplers}, arXiv preprint arXiv:1912.13170,  (2019).

\bibitem{BoEbZi-AAP20}
{\sc N.~Bou-Rabee, A.~Eberle, and R.~Zimmer}, {\em Coupling and convergence for
  {H}amiltonian {M}onte {C}arlo}, Ann. Appl. Probab., 30 (2020),
  pp.~1209--1250.

\bibitem{BoVoDo-JASA18}
{\sc A.~Bouchard-C{\^o}t{\'e}, S.~J. Vollmer, and A.~Doucet}, {\em The bouncy
  particle sampler: A non-reversible rejection-free {M}arkov chain {M}onte
  {C}arlo method}, J. Amer. Statist. Assoc., 113 (2018), pp.~855--867.

\bibitem{BrGeJoMe-Book11}
{\sc S.~Brooks, A.~Gelman, G.~Jones, and X.-L. Meng}, {\em {Handbook of Markov
  chain Monte Carlo}}, CRC press, 2011.

\bibitem{CaLuWa-ARMA23}
{\sc Y.~Cao, J.~Lu, and L.~Wang}, {\em On explicit {$L^2$}-convergence rate
  estimate for underdamped {L}angevin dynamics}, Archive for Rational Mechanics
  and Analysis, 247 (2023), p.~Art. 90.

\bibitem{CaJuMaToUn-MM01}
{\sc J.~A. Carrillo, A.~J{\"u}ngel, P.~A. Markowich, G.~Toscani, and
  A.~Unterreiter}, {\em Entropy dissipation methods for degenerate parabolic
  problems and generalized {Sobolev} inequalities}, Monatshefte f{\"u}r
  Mathematik, 133 (2001), pp.~1--82.

\bibitem{ChChBaJo-PMLR18}
{\sc X.~Cheng, N.~S. Chatterji, P.~L. Bartlett, and M.~I. Jordan}, {\em
  Underdamped langevin mcmc: A non-asymptotic analysis}, in Conference on
  learning theory, PMLR, 2018, pp.~300--323.

\bibitem{ChErLiShZh-FCM25}
{\sc S.~Chewi, M.~A. Erdogdu, M.~Li, R.~Shen, and M.~S. Zhang}, {\em Analysis
  of {Langevin} {Monte Carlo} from {Poincare} to {log-Sobolev}}, Foundations of
  Computational Mathematics, 25 (2025), pp.~1345--1395.

\bibitem{DaRi-Bernoulli20}
{\sc A.~S. Dalalyan and L.~Riou-Durand}, {\em On sampling from a log-concave
  density using kinetic {Langevin} diffusions}, Bernoulli, 26 (2020),
  pp.~1956--1988.

\bibitem{DuEl-Book97}
{\sc P.~Dupuis and R.~S. Ellis}, {\em A Weak Convergence Approach to the Theory
  of Large Deviations}, Wiley, New York, 1997.

\bibitem{DuLiPlDo-MMS12}
{\sc P.~Dupuis, Y.~Liu, N.~Plattner, and J.~D. Doll}, {\em On the infinite
  swapping limit for parallel tempering}, Multiscale Modeling \& Simulation, 10
  (2012), pp.~986--1022.

\bibitem{DuMo-AAP17}
{\sc A.~Durmus and {\'E}.~Moulines}, {\em Nonasymptotic convergence analysis
  for the unadjusted {L}angevin algorithm}, The Annals of Applied Probability,
  27 (2017), pp.~1551--1587.

\bibitem{EaDe-PCCP05}
{\sc D.~J. Earl and M.~W. Deem}, {\em Parallel tempering: Theory, applications,
  and new perspectives}, Phys. Chem. Chem. Phys., 7 (2005), pp.~3910--3916.

\bibitem{EbGuZi-AOP19}
{\sc A.~Eberle, A.~Guillin, and R.~Zimmer}, {\em Couplings and quantitative
  contraction rates for {L}angevin dynamics}, Ann. Probab., 47 (2019),
  pp.~1982--2010.

\bibitem{EnReYa-CAMC24}
{\sc B.~Engquist, K.~Ren, and Y.~Yang}, {\em Adaptive state-dependent diffusion
  for derivative-free optimization}, Commun. Appl. Math. Comput., 6 (2024),
  pp.~1241--1269.

\bibitem{EnReYa-arXiv24}
{\sc B.~Engquist, K.~Ren, and Y.~Yang}, {\em Sampling with adaptive variance
  for multimodal distributions}, arXiv:2411.15220,  (2024).

\bibitem{ErHo-PMLR21}
{\sc M.~A. Erdogdu and R.~Hosseinzadeh}, {\em On the convergence of {Langevin}
  monte carlo: The interplay between tail growth and smoothness}, in Conference
  on Learning Theory, PMLR, 2021, pp.~1776--1822.

\bibitem{EvGa-Book15}
{\sc L.~C. Evans and R.~F. Gariepy}, {\em Measure theory and fine properties of
  functions}, Textbooks in Mathematics, CRC Press, revised~ed., 2015.

\bibitem{FaSaSk-JCP14}
{\sc Y.~Fang, J.~M. Sanz-Serna, and R.~D. Skeel}, {\em Compressible generalized
  hybrid {M}onte {C}arlo}, J. Chem. Phys., 140 (2014), p.~174108.

\bibitem{Friedman-Book08}
{\sc A.~Friedman}, {\em Partial Differential Equations of Parabolic Type},
  Courier Dover Publications, 2008.

\bibitem{GeLeRi-NeurIPS18}
{\sc R.~Ge, H.~Lee, and A.~Risteski}, {\em Simulated tempering {L}angevin
  {M}onte {C}arlo with multimodal distributions}, in Advances in Neural
  Information Processing Systems (NeurIPS), 2018.

\bibitem{GiCa-JRSS11}
{\sc M.~Girolami and B.~Calderhead}, {\em Riemann manifold {Langevin} and
  {Hamiltonian} {Monte Carlo} methods}, Journal of the Royal Statistical
  Society Series B: Statistical Methodology, 73 (2011), pp.~123--214.

\bibitem{HaScZh-Nonlinearity16}
{\sc C.~Hartmann, C.~Sch{\"u}tte, and W.~Zhang}, {\em Model reduction
  algorithms for optimal control and importance sampling of diffusions},
  Nonlinearity, 29 (2016), pp.~2298--2326.

\bibitem{HoSt-JSP87}
{\sc R.~Holley and D.~W. Stroock}, {\em Logarithmic {Sobolev} inequalities and
  stochastic {Ising} models}, J. Stat. Phys., 46 (1987), pp.~1159--1194.

\bibitem{JoKiOt-SIAM98}
{\sc R.~Jordan, D.~Kinderlehrer, and F.~Otto}, {\em The variational formulation
  of the {F}okker--{P}lanck equation}, SIAM Journal on Mathematical Analysis,
  29 (1998), pp.~1--17.

\bibitem{Jungel-Book16}
{\sc A.~J{\"u}ngel}, {\em Entropy Methods for Diffusive Partial Differential
  Equations}, Springer, 2016.

\bibitem{LaSoUr-Book68}
{\sc O.~A. Ladyzhenskaya, V.~A. Solonnikov, and N.~N. Ural\'tseva}, {\em Linear
  and Quasi-linear Equations of Parabolic Type}, American Mathematical Society,
  1968.

\bibitem{LaRoRo-AAP13}
{\sc K.~{\L}atuszy{\'n}ski, G.~O. Roberts, and J.~S. Rosenthal}, {\em Adaptive
  {G}ibbs samplers and related {MCMC} methods}, Ann. Appl. Probab., 23 (2013),
  pp.~66--98.

\bibitem{Ledoux-SP04}
{\sc M.~Ledoux}, {\em Logarithmic {Sobolev} inequalities for unbounded spin
  systems revisited}, in S{\'e}minaire de Probabilit{\'e}s XXXV, Springer,
  2004, pp.~167--194.

\bibitem{LeRiGe-NIPS18}
{\sc H.~Lee, A.~Risteski, and R.~Ge}, {\em Beyond log-concavity: Provable
  guarantees for sampling multi-modal distributions using simulated tempering
  {Langevin} {Monte Carlo}}, Advances in Neural Information Processing Systems,
  31 (2018).

\bibitem{LeMaSt-IMAJNA16}
{\sc B.~Leimkuhler, C.~Matthews, and G.~Stoltz}, {\em The computation of
  averages from equilibrium and nonequilibrium {L}angevin molecular dynamics},
  IMA J. Numer. Anal., 36 (2016), pp.~13--79.

\bibitem{LeRoSt-Book10}
{\sc T.~Lelievre, M.~Rousset, and G.~Stoltz}, {\em Free Energy Computations: a
  Mathematical Perspective}, World Scientific, 2010.

\bibitem{LiChCaCa-AAAI16}
{\sc C.~Li, C.~Chen, D.~Carlson, and L.~Carin}, {\em Preconditioned stochastic
  gradient {Langevin} dynamics for deep neural networks}, in Proceedings of the
  AAAI Conference on Artificial Intelligence, vol.~30, 2016.

\bibitem{LiWa-NeurIPS16}
{\sc Q.~Liu and D.~Wang}, {\em {S}tein variational gradient descent: A general
  purpose {B}ayesian inference algorithm}, in Advances in Neural Information
  Processing Systems (NeurIPS), 2016.

\bibitem{LuSlWa-Nonlinearity23}
{\sc Y.~Lu, D.~Slep{\v c}ev, and L.~Wang}, {\em Birth--death dynamics for
  sampling: global convergence, approximations and their asymptotics},
  Nonlinearity, 36 (2023), pp.~5731--5772.

\bibitem{MaChJiFlJo-Bernoulli21}
{\sc Y.-A. Ma, Y.~Chen, C.~Jin, N.~Flammarion, and M.~I. Jordan}, {\em Is there
  an analog of {N}esterov acceleration for gradient-based {MCMC}?}, Bernoulli,
  27 (2021), pp.~1942--1992.

\bibitem{MaPa-EL92}
{\sc E.~Marinari and G.~Parisi}, {\em Simulated tempering: A new {M}onte
  {C}arlo scheme}, Europhysics Letters, 19 (1992), pp.~451--458.

\bibitem{MaVi-MC00}
{\sc P.~A. Markowich and C.~Villani}, {\em On the trend to equilibrium for the
  fokker-planck equation: an interplay between physics and functional
  analysis}, Mat. Contemp, 19 (2000), pp.~1--29.

\bibitem{Monmarche-EJS21}
{\sc P.~Monmarch{\'e}}, {\em High-dimensional {MCMC} with a standard splitting
  scheme for the underdamped {L}angevin diffusion}, Electron. J. Stat., 15
  (2021), pp.~4117--4166.

\bibitem{Pavliotis-Book14}
{\sc G.~A. Pavliotis}, {\em Stochastic Processes and Applications},
  Springer-Verlag, New York, 2014.

\bibitem{ReWe-SIAMJUQ21}
{\sc S.~Reich and S.~Weissmann}, {\em {F}okker--{P}lanck particle systems for
  {B}ayesian inference: Computational approaches}, SIAM/ASA J. Uncertain.
  Quantif., 9 (2021), pp.~446--482.

\bibitem{RiQuRiSc-SIAM24}
{\sc E.~Ribera~Borrell, J.~Quer, L.~Richter, and C.~Sch{\"u}tte}, {\em
  Improving control-based importance sampling strategies for metastable
  diffusions via adapted metadynamics}, SIAM Journal on Scientific Computing,
  46 (2024), pp.~S298--S323.

\bibitem{RiVo-arXiv22}
{\sc L.~Riou-Durand and J.~Vogrinc}, {\em Metropolis adjusted {L}angevin
  trajectories: a robust alternative to {H}amiltonian {M}onte {C}arlo}, arXiv
  preprint arXiv:2202.13230,  (2022).

\bibitem{RoRo-Book04}
{\sc G.~O. Roberts and J.~S. Rosenthal}, {\em General state space markov chains
  and {MCMC} algorithms}, Probability Surveys, 1 (2004), pp.~20--71.

\bibitem{RoSt-MCAP02}
{\sc G.~O. Roberts and O.~Stramer}, {\em Langevin diffusions and
  {Metropolis-Hastings} algorithms}, Methodology and Computing in Applied
  Probability, 4 (2002), pp.~337--357.

\bibitem{SuSyBoCa-NeurIPS22}
{\sc N.~Surjanovic, S.~Syed, A.~Bouchard-C{\^o}t{\'e}, and T.~Campbell}, {\em
  Parallel tempering with a variational reference}, in Advances in Neural
  Information Processing Systems (NeurIPS), 2022.

\bibitem{SwWa-PRL86}
{\sc R.~H. Swendsen and J.-S. Wang}, {\em Replica {M}onte {C}arlo simulation of
  spin-glasses}, Phys. Rev. Lett., 57 (1986), pp.~2607--2609.

\bibitem{SyBoDeDo-JRSSB22}
{\sc S.~Syed, A.~Bouchard-C{\^o}t{\'e}, G.~Deligiannidis, and A.~Doucet}, {\em
  Non-reversible parallel tempering: a scalable highly parallel {MCMC} scheme},
  J. R. Stat. Soc. Ser. B Stat. Methodol., 84 (2022), pp.~321--350.

\bibitem{TiPa-JRSSB18}
{\sc M.~K. Titsias and O.~Papaspiliopoulos}, {\em Auxiliary gradient-based
  sampling algorithms}, J. R. Stat. Soc. Ser. B Stat. Methodol., 80 (2018),
  pp.~749--767.

\bibitem{VeWi-NIPS19}
{\sc S.~Vempala and A.~Wibisono}, {\em Rapid convergence of the unadjusted
  {Langevin} algorithm: Isoperimetry suffices}, Advances in neural information
  processing systems, 32 (2019).

\bibitem{XiShLiByGi-SPL14}
{\sc T.~Xifara, C.~Sherlock, S.~Livingstone, S.~Byrne, and M.~Girolami}, {\em
  {Langevin} diffusions and the {Metropolis}-adjusted {Langevin} algorithm},
  Statistics \& Probability Letters, 91 (2014), pp.~14--19.

\bibitem{ZhCh-ICLR22}
{\sc Q.~Zhang and Y.~Chen}, {\em Path integral sampler: a stochastic control
  approach for sampling}, in International Conference on Learning
  Representations (ICLR), 2022.

\bibitem{Ziemer-Book12}
{\sc W.~P. Ziemer}, {\em Weakly Differentiable Functions: {Sobolev} Spaces and
  Functions of Bounded Variation}, Springer Science \& Business Media, 2012.

\end{thebibliography}

}

\end{document}